\documentclass[a4paper]{article}
\usepackage{amscd,amssymb,amsopn,amsmath,amsthm,graphics,amsfonts,
enumerate,verbatim,calc}
\usepackage[dvips]{graphicx}
\usepackage[colorlinks=true,linkcolor=red,citecolor=blue]{hyperref}
\usepackage[all]{xy}
\usepackage{pgf,tikz}
\usepackage{tikz-cd}
\usepackage{mathrsfs}
\usepackage{float}
\usepackage[latin1,utf8]{inputenc}
\addtolength{\textwidth}{2cm}

\usepackage{bbm}
\usepackage{enumitem}
\usepackage{color}
\usepackage{graphicx}
\usepackage{transparent}
\usepackage{graphics}
\usepackage{xcolor}
\usepackage{color}
\usepackage{geometry}
\usepackage{marginnote}
\usepackage{mathtools}

\usepackage{pgf,tikz}
\usepackage{mathrsfs}
\usetikzlibrary{arrows,decorations.pathmorphing,backgrounds,positioning,fit,petri,cd}

\theoremstyle{plain} 
\newtheorem{theorem}{Theorem}[section]
\newtheorem{proposition}[theorem]{Proposition}
\newtheorem{lemma}[theorem]{Lemma}
\newtheorem{corollary}[theorem]{Corollary}

\theoremstyle{remark}
\newtheorem{remark}[theorem]{Remark}
\newtheorem{example}[theorem]{Example}

\theoremstyle{definition}
\newtheorem{definition}[theorem]{Definition}

\DeclareMathOperator{\Hom}{Hom}
\DeclareMathOperator{\Ext}{Ext}

\DeclareMathOperator{\im}{Im}

\DeclareMathOperator{\Filt}{Filt}

\DeclareMathOperator{\Mod}{Mod}

\DeclareMathOperator{\Tors}{Tors}
\DeclareMathOperator{\Obj}{Obj}
\DeclareMathOperator{\tot}{tot}
\DeclareMathOperator{\WStab}{WStab}
\DeclareMathOperator{\Stab}{Stab}

\DeclareMathOperator{\Slice}{Slice}
\renewcommand{\min}{\text{min}}
\renewcommand{\max}{\text{max}}
\renewcommand{\sup}{\text{sup}}
\renewcommand{\inf}{\text{inf}}

\renewcommand{\mod}{\operatorname{mod}}

\newcommand{\A}{\mathcal{A}}

\newcommand{\X}{\mathcal{X}}
\newcommand{\Y}{\mathcal{Y}}

\newcommand{\F}{\mathcal{F}}
\renewcommand{\P}{\mathcal{P}}
\newcommand{\cP}{\mathcal{P}}
\newcommand{\T}{\mathcal{T}}
\newcommand{\fT}{\mathfrak{T}}

\newcommand{\0}{\{0\}}

\newcommand{\CT}{\mathfrak{T}}

\newcommand{\U}{\mathscr{U}}

\newcommand{\cS}{\mathcal{S}}

\newcommand{\bx}{\mathbf{x}}
\newcommand{\by}{\mathbf{y}}

\title{Weak stability conditions and the space of chains of torsion classes}
\author{Aran Tattar and Hipolito Treffinger}

\begin{document}

\maketitle

\begin{abstract}
    In this paper we show an explicit relation between chains of torsion classes and weak stability conditions over an abelian category. In particular, up to a natural equivalence, they coincide. We investigate topological properties of the space of chains of torsion classes and its quotient given by this equivalence relation. 
    In particular we show that this space is compact if and only if the abelian category has finitely many torsion classes.
\end{abstract}

\tableofcontents

\section{Introduction}

The study of stability conditions began and was pioneered by  Mumford  \cite{Mumford1965} in their work on geometric invariant theory in the 1960s. 
Since then, these notions and techniques have been adapted and generalised to much success in a variety of fields including differential geometry \cite{Tian}, triangulated and derived categories \cite{Bridgeland2007} and quiver representations \cite{King1994,Schofield1991}. 
The latter were formalised to define and study stability conditions on general abelian categories by Rudakov \cite{Rudakov1997},  which is the setting we are interested in. 
Joyce \cite{Joyce2007} then generalised these ideas further by introducing the concept of weak stability conditions for an abelian category and showed that  many important properties of stability conditions also hold for this more general class. Let us highlight of these properties in particular: 

Let $\A$ be an abelian category. 
A (weak) stability condition (satisfying mild finitness assumptions) on $\A$ determines a class of \emph{semistable} objects and a \emph{slicing} of $\A$. 
In short, this means that for each object $X \in \A$ there exists a unique ordered filtration by these semistable objects (we make this precise in Section~\ref{sec:background}). 
These filtrations, known as \emph{Harder-Narasimhan filtrations} for the work of Harder and Narasimhan \cite{Harder1975}, are a powerful tool allowing to reduce problems concerning all the objects in the abelian category to the often better behaved semistable objects.
This property is axiomatised in the concept of a \emph{slicing} (Definition \ref{def:slicing}).

On the algebraic side of the story, in \cite{Brustle2017} and \cite{T-HN-filt}, it was shown that every stability condition on $\A$ induces two  \emph{chains of torsion classes} in $\A$; we extend this to weak stability conditions in Corollary~\ref{prop:stab-chain}. 
Moreover it was shown \cite{T-HN-filt}, that every chain of torsion classes (again, satisfying mild finitness assumptions) induces Harder-Narasimhan filtrations of each object. 
We summarise this in the Figure \ref{fig:history}. 

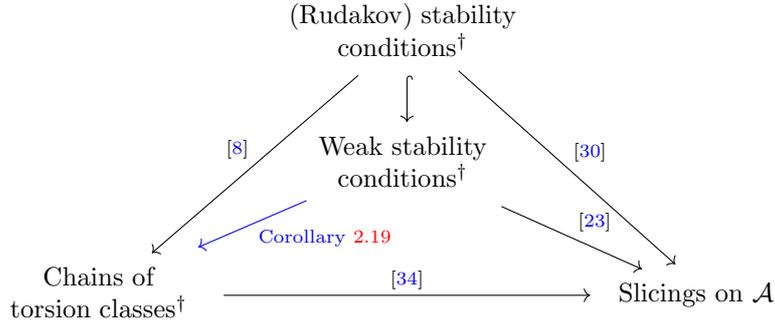
\begin{figure} \label{fig:history}
\[\begin{tikzcd}[ampersand replacement = \&, column sep = 0.5cm, every cell/.append style={align=center}]
                                                                                          \& \begin{tabular}{c}
(Rudakov) stability \\ conditions$^\dag$                                                                                          
 \end{tabular} \arrow[rdd, "{\text{\cite{Rudakov1997}}}", shift left] \arrow[d, hook] \arrow[ldd, "{\text{\cite{Brustle2017}}}"']                        \&                                        \\
 \&
\begin{tabular}{c}
Weak stability \\ conditions$^\dag$
\end{tabular}                                                                                         \arrow[ld,  blue, shift right,"{\text{Corollary \ref{prop:stab-chain}}}"] \arrow[rd, "{\text{\cite{Joyce2007}}}"] \&      
    
    \\
\begin{tabular}{c}
Chains of \\ torsion classes$^\dag$
\end{tabular} \arrow[rr, "{\text{\cite{T-HN-filt}}}"] \arrow[ru, phantom, shift right, red] \&  \&   \begin{tabular}{c}
Slicings on $\A$
 \end{tabular}                                                                                                   
\end{tikzcd} \] 
 \centering
    \caption{A pictorial summary of the known relationships between our objects of interest with a {\color{blue}generalisation}: In an ambient abelian category $\A$ the diagram (with both black and blue arrows) commutes ($\dag$: satisfying finiteness conditions). We explain all components of this diagram in Section~\ref{sec:background}.}
    \label{fig:my_label}\end{figure}

Torsion classes are an important class of subcategories in additive categories, they arose in abelian categories as an abstraction of the properties of torsion abelian groups in the category of abelian groups \cite{Dickson1966}. 
Since then, they became central in representation theory, playing a key role in tilting and $\tau$-tilting theory \cite{AIR, BB}, Auslander-Reiten theory \cite{Auslander1981}, and in connections with combinatorics \cite{Ingalls2009}, lattice theory \cite{DIRRT} and representations of posets \cite{Rognerud2021}.
We also note that the concept of a torsion class also exists in more general settings, for example quasi-abelian categories \cite{tattar2021torsion}, $n$-abelian categories \cite{Jorgensendtorsion}, and extriangulated categories \cite{adachi2022intervals}. 
Moreover, it has been shown that in the first two of these settings, that chains of torsion classes also induce Harder-Narsimhan filtrations of each object, see \cite{tattar2021torsion} and \cite{asadollahi2022higher} respectively.

One will notice that the diagram in Figure \ref{fig:history} is directed. 
It is a natural question if one may recover the data of a (weak) stability condition and/or a chain of torsion classes from the slicing it induces.
We give this question a complete answer in two steps.
First we show an explicit relationship between chains of torsion classes and weak stability conditions as follows.

\begin{theorem}[Theorems~\ref{thm:semistabchain}, \ref{thm:ohm-mho-are-wsc}  and \ref{prop:sameslicing}]
    Let $\eta$ be a chain of torsion classes in an abelian category $\A$. 
    Then $\eta$ induces two weak stability conditions on $\A$, $\Omega_\eta$ and $\mho_\eta$ having the same slicing. 
    Moreover, if $\phi$ is a weak stability condition then there exists a chain of torsion classes $\eta$ such that $\phi$, $\Omega_\eta$, and $\mho_\eta$ have the same slicing.
\end{theorem}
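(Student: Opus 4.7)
The plan is to decompose the statement into the three constituent results indicated and address each by working through the bottom arrow of Figure~\ref{fig:history}, namely the chain-to-slicing construction of \cite{T-HN-filt}. Writing $\eta = (\T_r)_{r \in \R}$ and letting $\F_r$ be the torsion-free class paired with $\T_r$, I would define, for a nonzero $X \in \A$, two candidate phase functions:
$$\Omega_\eta(X) := \sup\{\, r \in \R : X \in \T_r \,\}, \qquad \mho_\eta(X) := \inf\{\, r \in \R : X \in \F_r \,\}.$$
Heuristically, $\Omega_\eta$ records the last value of $r$ at which $X$ is still torsion, while $\mho_\eta$ records the first value at which it is already torsion-free. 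These two numbers may differ, but the associated semistable objects of phase $r$ should coincide: in both cases they are characterised by lying in $\T_{r'} \cap \F_{r''}$ for all $r' < r < r''$, which is precisely the slice $\P_\eta(r)$ extracted from $\eta$ via \cite{T-HN-filt}. This gives Theorem~\ref{thm:semistabchain} together with the equality of slicings in Theorem~\ref{prop:sameslicing}.

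For Theorem~\ref{thm:ohm-mho-are-wsc} one must verify that $\Omega_\eta$ and $\mho_\eta$ are actually weak stability conditions in the sense of Joyce. The existence of Harder--Narasimhan filtrations is inherited from \cite{T-HN-filt}, since the HN filtration of any object with respect to $\eta$ decomposes into factors lying in the individual slices $\P_\eta(r)$. The remaining axiom is the see-saw inequality on short exact sequences: given $0 \to A \to B \to C \to 0$, I would use closure of $\T_r$ under extensions and quotients, together with closure of $\F_r$ under subobjects and extensions, to confine the behaviour of $B$ between that of $A$ and $C$ at each fixed $r$; passing to $\sup$ or $\inf$ then delivers the required monotone comparison for $\Omega_\eta$ and $\mho_\eta$ respectively.

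For the converse direction, given a weak stability condition $\phi$ with slicing $\P$, I would recover $\eta$ by the standard extraction $\T_r := \Filt(\P(s) : s > r)$, check that this family forms a chain of torsion classes, and then invoke the previous steps to confirm that $\Omega_\eta$ and $\mho_\eta$ both have slicing $\P$. The main obstacle is the see-saw verification: one must handle the boundary behaviour where $\Omega_\eta(X) \neq \mho_\eta(X)$, i.e.\ where $X$ sits on the interface of a torsion pair in a non-slicewise way. This is precisely where a single ill-placed subobject could violate the see-saw, and the resolution is to pass through the HN factors of \cite{T-HN-filt}, on each of which both definitions agree with the common slice phase, and then reassemble the required inequality by transitivity along the filtration.
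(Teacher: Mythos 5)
Your strategy is essentially the paper's: define phase functions by $\sup$ of torsion membership and $\inf$ of torsion-free membership, verify the weak stability axiom via closure of torsion(-free) classes, and then match up slicings. Two points to flag, one cosmetic and one substantive. Cosmetically, your labels are swapped relative to the paper: the paper sets $\mho_\eta(M) = \sup\{i : M \in \T_i\}$ and $\Omega_\eta(M) = \inf\{i : M \in \F_i\}$, so that $\mho_\eta \leq \Omega_\eta$, whereas you have these reversed. Also, the verification that $\Omega_\eta$ and $\mho_\eta$ are weak stability conditions does not pass through Harder--Narasimhan filtrations at all (that's a red herring you introduce); the paper's Theorem~\ref{thm:ohm-mho-are-wsc} follows directly from Lemma~\ref{lemma: omega properties}, exactly the closure-of-torsion-classes argument you correctly outline, combined with a short case analysis on whether $\mho(L) \leq \mho(N)$ or not.

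The substantive gap is in the slicing comparison. You assert that $\Omega_\eta$ and $\mho_\eta$ "should" have the same slicing as $\eta$ because both are characterised by lying in $\T_{r'} \cap \F_{r''}$ for all $r' < r < r''$, but this is exactly what needs proof and is not automatic. The $\mho_\eta$-semistable objects of a given phase are computed using the chain $\eta^{\pm}\mho_\eta$ obtained by passing through Proposition~\ref{prop:stabtorsion}, and one must check that those chains give the same slices as $\eta$ itself. In the paper this is Theorem~\ref{prop:sameslicing}, which requires showing two equalities of intersections of torsion(-free) classes, $\bigcap_{s<r}\cS_s = \bigcap_{s<r}\T_s$ and $\bigcap_{s>r}\cS_s^{\perp} = \bigcap_{s>r}\F_s$; the second of these invokes Lemma~\ref{lem:union-intersection-torsion} and hence the quasi-Noetherian hypothesis on $\eta$. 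Your heuristic characterisation elides this, and in particular gives no indication of where the quasi-Noetherian/weakly-Artinian hypotheses enter — but they are genuinely needed. The same comment applies to the converse: showing that the chain recovered from $\phi$ has the same slicing as $\phi$ is the content of Theorem~\ref{thm:semistabchain} and also requires an argument (a computation of $\bigcap_{s<t}\T_{\geq s} \cap \bigcap_{s>t}\F_{<s}$ against $\phi$-semistability), not just an appeal to "the previous steps."
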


Moreover, these mappings when added to Figure \ref{fig:history} preserve the commutativity of the diagram (see Section~\ref{sec: order structures}). 
Consequently, up to slicing, chains of torsion classes and weak stability conditions coincide, moreover every slicing arises in this way (Proposition~\ref{prop: slice(A)  cong T(A)}). 
In other words, up to the Harder-Narasimhan filtrations they induce, the lower three vertices in Figure~\ref{fig:history} equivalent.
Afterwards, in a second instance we also give evidence that this equivalence does not extend to the the whole of Figure~\ref{fig:history} as  $\mho_\eta$ and $\Omega_\eta$ are rarely stability conditions (see Proposition \ref{prop:when-ohm-mho-stab}).

\smallskip
It follows from the ideas of Bridgeland \cite{Bridgeland2007} that class of chains of torsion classes on $\A$, $\fT(\A)$, has a natural pseudometric (Proposition~\ref{prop:metric}). 
As a quotient of $\fT(\A)$, the class of slicings $\Slice \A$ also has a natural topological structure. In the second part of the paper we devote ourselves to the study of the topological properties of these spaces.
Our main result in this part of the paper is the following.
\begin{theorem}[Theorem~\ref{thm:compact}]
    Let $\A$ be an abelian category. 
    The space $\Slice \A$ of slicings of $\A$ is compact if and only if there are finitely many torsion classes in $\A$.
\end{theorem}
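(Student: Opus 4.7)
The plan is to prove the two directions separately. Since $\Slice \A$ inherits a pseudometric topology from $\fT(\A)$ via Proposition~\ref{prop:metric} and the quotient equivalence that defines it, compactness will be verified through sequential compactness, identifying points of pseudodistance zero throughout.

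\textbf{Sufficiency.} Suppose $\Tors \A = \{\T_1, \ldots, \T_N\}$ is finite. Any chain of torsion classes $\eta$ is a monotone function from the parameter space into this finite set, hence a step function with at most $N$ jumps. Its equivalence class as a slicing is determined by (i) an ordered subchain $\T_{i_1} \supsetneq \cdots \supsetneq \T_{i_k}$ of $\Tors \A$, and (ii) the parameters at which $\eta$ transitions. The combinatorial data (i) ranges over a finite set, while for each fixed combinatorial type the tuple of transition parameters in (ii) lives, after the natural normalization of parameters by the equivalence defining $\Slice \A$, in a closed bounded subset of a compact cube. Hence $\Slice \A$ is a finite union of continuous images of compact spaces, and so is compact.

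\textbf{Necessity (contrapositive).} Assume $\Tors \A$ is infinite and fix a sequence $(\T_n)_{n \in \mathbb{N}}$ of pairwise distinct torsion classes. For each $n$, let $\eta_n$ be the chain taking the value $\A$ on parameters $t < a$, the value $\T_n$ on $a \leq t < b$, and the zero torsion class on $t \geq b$, for fixed $a < b$. Denote by $\phi_n \in \Slice \A$ the induced slicing. The claim is that $(\phi_n)$ has no Cauchy subsequence. For any $n \neq m$, the torsion classes $\T_n$ and $\T_m$ differ, so we may pick $X$ in their symmetric difference, say $X \in \T_n \setminus \T_m$. The Harder--Narasimhan filtration of $X$ for $\eta_n$ places $X$ at a single phase inside $[a,b]$, while the filtration for $\eta_m$ has at least one piece at a phase outside $[a,b]$ (the part of $X$ not in $\T_m$ is torsion-free with respect to $\T_m$, so it lies strictly below $a$). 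Plugging these HN-data into the pseudometric of Proposition~\ref{prop:metric} produces a lower bound on $d(\phi_n, \phi_m)$ depending only on the fixed pair $(a,b)$. This uniform separation rules out any convergent subsequence, so $\Slice \A$ is not compact.

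\textbf{Main obstacle.} The delicate point is the uniform lower bound on the pseudodistance between the $\phi_n$. In principle the witness object $X$ depends on the pair $(n,m)$, and one must verify that the resulting discrepancy in phases, when measured by the pseudometric, does not degenerate along a subsequence. This requires a concrete analysis of how Harder--Narasimhan filtrations vary with the defining chain and careful use of the explicit form of the pseudometric recalled in Proposition~\ref{prop:metric}. If necessary, one can refine the choice of the $\T_n$ (for instance, by extracting an infinite antichain in the lattice $\Tors \A$, or by using that an infinite $\Tors \A$ cannot be well-ordered by both inclusion and reverse-inclusion) to guarantee that for every pair $n \neq m$ the witness $X$ lies in one of $\T_n \setminus \T_m$ or $\T_m \setminus \T_n$, and thus the lower bound is controlled solely by $a$ and $b$.
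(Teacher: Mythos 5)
Your argument is essentially the same as the paper's, so let me focus on the one place you flag as a ``main obstacle'': it isn't one. With your chains $\eta_n$, supported on a fixed interval $[a,b)$, any witness $X$ in $\T_n\setminus\T_m$ automatically satisfies $\mho_{\eta_n}(X)=b$ and $\mho_{\eta_m}(X)=a$ (and symmetrically if $X\in\T_m\setminus\T_n$), because $\mho_\eta(X)=\sup\{i : X\in\T_i\}$ only sees whether $X$ lies in the middle torsion class or not. So $d(\phi_n,\phi_m)\ge b-a$ for \emph{all} $n\ne m$, with no dependence on the choice of $X$ and no need to extract an antichain or restrict to a subsequence. The paper streamlines this further by taking $(a,b)=(0,1)$, which gives $d(\eta(\T),\eta(\T'))=1$ exactly for any $\T\ne\T'$, and then phrases non-compactness via an open cover by balls of radius $1/2$ rather than via sequential compactness --- but on the metric space $\Slice\A$ the two are equivalent, so your formulation is fine. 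For the sufficiency direction your sketch (finite combinatorial types, each with transition parameters ranging over a compact simplex) is the informal content of the paper's Corollary~\ref{cor: finite-tors-then-finite CW}, which realises $\Slice\A$ as a finite CW complex built from the simplices $\Delta_S$; the paper also checks (Proposition~\ref{prop: finite-tors-weak-topology}) that the pseudometric topology agrees with the weak topology of this complex, which is the one technical point your sketch leaves implicit.
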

The above result, when applied to module categories of finite dimensional associative algebras, gives a new characterisation of the $\tau$-tilting finite property (Corollary~\ref{cor:taufinite}).
We moreover study certain distinguished open and closed subsets of these spaces.
For instance we show that maximal green sequences, that is non-refinable finite chains of torsion classes, induce open sets in the space of chains of torsion classes (Theorem~\ref{thm:chambers}), while the set $L(X)$ of chains of torsion classes for which a fixed object $X$ is quasi-semistable is a closed set (Proposition~\ref{prop:wallII}).

\medskip

The article is organised as follows. 
In Section \ref{sec:background} we recall the notions of chains of torsion classes and weak stability conditions in abelian categories and prove some of their basic properties and explain the relationships exhibited in Figure \ref{fig:history}.
In Section \ref{sec:tors-to-wsc}, we show how each chain of torsion classes induces two canonical weak stability conditions and moreover characterise when these are stability conditions. 
We go on to show that these constructions are compatible with the commutativity of Figure \ref{fig:history} using the natural poset structures on the spaces of weak stability conditions and chains of torsion classes. 
We also see that chains of split torsion classes correspond to total weak stability conditions. 
An important consequence of this section is that when considered up to a natural equivalence relation, the classes chains of torsion classes and weak stability conditions coincide with the space of slicings, $\Slice \A$.
In the fourth section we investigate topological properties of the space of chains of torsion classes and of $\Slice \A$. 
To do this, we give an explicit construction of $\Slice \A$ and show that this space has particularly nice properties when $\Tors \A$ is finite. 
We go on to study particular open and closed sets of $\Slice \A$ and $\fT(\A)$.

\bigskip

\noindent \textbf{Conventions:} 
Recall that  a category is \textit{small} if the class of all isomorphism classes of its objects is a set.
Throughout, $\A$ will denote a small abelian category. 
Every subcategory $\X$ of $\A$ is assumed to be additive, full and closed under isomorphisms.

\smallskip
\noindent
\textbf{Acknowledgements:} 
H.~Treffinger is supported by the European Union’s Horizon 2020 research and innovation programme under the Marie Sklodowska-Curie grant agreement No 893654. The authors would like to thank Jenny August for interesting discussions. 

\section{Preliminaries} \label{sec:background}

We begin by briefly recalling some preliminary notions. 
Let $\X$ be a subcategory of $\A$.
By $\X^\perp$ we denote the subcategory of $\A$ with objects \[\X^\perp = \{ Y \in \A \mid \Hom_\A(\X, Y) = 0 \}. 
\] The subcategory ${}^\perp\X$ is defined dually.

We say that an object $M$ of $\A$ is \emph{filtered by $\X$} if there exists a finite sequence of subobjects 
$$M_0 \subset M_1 \subset \dots \subset M_n$$
such that $M_0=0$, $M_n =M$ and $M_i/M_{i-1} \in \X$ for all $1 \leq i \leq n$.
We denote $\Filt{\X}$ the category of all objects filtered by $\X$. Equivalently, $\Filt{\X}$ is the smallest subcategory of $\A$ containing $\X$ that is closed under extensions. 

An object in $\A$ is said to be of \textit{finite length} it there exists a finite filtration by the simple objects in $\A$. 
We say that $\A$ is of \textit{finite length} if every object in $\A$ is of finite length. 

\subsection{Chains of torsion classes}

Generalising the classical properties of torsion and torsionfree abelian groups, Dickson introduced in \cite{Dickson1966} the notion of \textit{torsion pair} for a abelian categories as follows.

\begin{definition}
Let $\A$ be an abelian category. 
Then a pair $(\T, \F)$ of subcategories of $\A$ is a \textit{torsion pair} if the following conditions are satisfied:
\begin{itemize}
    \item $\Hom_\A (X,Y)=0$ for all $X \in \T$ and $Y \in \F$;
    \item for every object $X$ in $\A$ there exists the a short exact sequence
$$0\to tX\to X\to fX \to 0$$
where $tX\in\mathcal{T}$ and $fX\in\mathcal{F}$.
\end{itemize}

This short exact sequence, known as the \textit{canonical short exact sequence} of $M$ with respect to $(\T, \F)$, is unique up to isomorphism.
Given a torsion pair $(\T, \F)$ we say that $\T$ is a torsion class and $\F$ is a torsionfree class. 
\end{definition}

Dickson also gave useful characterisations of torsion and torsionfree classes which we recall here.

\begin{theorem}\cite[Theorems 2.1 \& 2.3]{Dickson1966} \label{thm:dicksontorsion}
Let $\T$ and $\F$ be full additive subcategories of an abelian category $\A$. Then the following are equivalent
\begin{enumerate}
    \item $(\T, \F)$ is a torsion pair.
    \item $\T$ is closed under coproducts, quotients and extensions and $\F = \T^\perp$.
    \item $\F$ is closed under products, subojects and extensions and $\T = {}^\perp\F$.
    \item $\T = {}^\perp \F $ and $\F = \T^\perp$.
\end{enumerate}
\end{theorem}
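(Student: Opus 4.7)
The plan is to establish the cycle $(1) \Rightarrow (4) \Rightarrow (2) \Rightarrow (1)$, and then obtain the equivalence of $(3)$ with the rest by dualising the arguments involving $(2)$ (conditions $(1)$ and $(4)$ are themselves self-dual). For $(1) \Rightarrow (4)$, the Hom-vanishing half of the torsion pair axioms gives the inclusions $\T \subseteq {}^\perp\F$ and $\F \subseteq \T^\perp$ for free. For the reverse inclusion, I would take $X \in {}^\perp\F$ and apply the canonical short exact sequence $0 \to tX \to X \to fX \to 0$: the quotient map $X \twoheadrightarrow fX$ lands in $\F$, so it must vanish, forcing $fX = 0$ and $X \cong tX \in \T$. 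The argument that $\T^\perp \subseteq \F$ is entirely dual.

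For $(4) \Rightarrow (2)$, only the three closure properties of $\T$ require proof, since $\F = \T^\perp$ is part of the hypothesis. Using the identity $\T = {}^\perp\F$, each closure property reduces to a statement about preservation of Hom-vanishing: coproducts via the natural isomorphism $\Hom_\A(\bigoplus X_i, Y) \cong \prod \Hom_\A(X_i, Y)$, quotients by pre-composition with the defining epimorphism, and extensions by left-exactness of $\Hom_\A(-,Y)$ applied to the short exact sequence in the first variable. The implication $(4) \Rightarrow (3)$ follows from the dual assembly of these arguments.

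The substantive step is $(2) \Rightarrow (1)$. For each $X \in \A$, I would construct $tX$ as the largest subobject of $X$ lying in $\T$, realised as the image of the canonical morphism $\bigoplus_{S \hookrightarrow X,\, S \in \T} S \to X$. By closure under coproducts this sum lies in $\T$, and by closure under quotients its image $tX$ also lies in $\T$. To verify $X/tX \in \F = \T^\perp$, given any morphism $f \colon T \to X/tX$ with $T \in \T$, I would pull back the epimorphism $X \twoheadrightarrow X/tX$ along the inclusion of $\im(f)$ into $X/tX$ to obtain a subobject $X' \subseteq X$ fitting in a short exact sequence $0 \to tX \to X' \to \im(f) \to 0$ whose outer terms lie in $\T$; closure under extensions places $X'$ in $\T$, hence inside $tX$, which forces $\im(f) = 0$. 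The delicate point is that this construction requires the class of $\T$-subobjects of $X$ to be essentially small, so that the indicated coproduct is a legitimate object of $\A$; this is exactly what the standing smallness hypothesis on $\A$ guarantees, and without such a well-poweredness condition the argument would need to be replaced by an explicit colimit construction.
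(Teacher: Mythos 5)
The paper does not prove this theorem; it cites it verbatim from Dickson (1966), so there is no internal argument to compare against. Your proof is the standard one, and the overall architecture --- the cycle $(1)\Rightarrow(4)\Rightarrow(2)\Rightarrow(1)$ with $(3)$ obtained by formal duality --- is correct, as are the arguments for $(1)\Rightarrow(4)$ (read off the vanishing of $X\twoheadrightarrow fX$), $(4)\Rightarrow(2)$ (stability of $\Hom$-orthogonals under coproducts, quotients and extensions), and the torsion-radical construction in $(2)\Rightarrow(1)$.

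The one place your reasoning is off is the final paragraph. You identify the existence of the coproduct $\bigoplus_{S\hookrightarrow X,\,S\in\T} S$ as the delicate point and claim that the paper's standing smallness hypothesis takes care of it. That is a conflation of two separate issues: smallness of $\A$ (isomorphism classes form a set) does ensure that the indexing class of $\T$-subobjects of $X$ is a set, but it does nothing to ensure that the resulting set-indexed coproduct actually exists in $\A$. A small abelian category need not admit infinite coproducts --- $\mod \Lambda$ for a finite-dimensional algebra $\Lambda$ is the prototypical example in this paper and has only finite direct sums. Dickson's own hypothesis is that the ambient category is $C_3$ (abelian with AB3 coproducts), and that, not smallness, is what legitimises your construction in general. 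When $\A$ lacks arbitrary coproducts, the correct fix is not ``an explicit colimit construction'' in the abstract, but the observation that a chain condition such as finite length makes the supremum of all $\T$-subobjects of $X$ a \emph{finite} join, hence a genuine subobject, with the extension-closure argument then running unchanged. Your verification that $X/tX \in \T^\perp$ --- via the pullback of $X\twoheadrightarrow X/tX$ along $\im f \hookrightarrow X/tX$ and closure under extensions --- is correct and is the heart of the matter; only the justification for why $tX$ exists needs to be repaired as above.
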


We denote by $\text{Tors}(\A)$ the set of all torsion classes in $\A$.
Moreover, the natural inclusion of sets induces a natural partial order in $\Tors(\A)$.


Inspired by the relation between stability conditions and torsion classes, it was studied in \cite{T-HN-filt} the relation between torsion classes and Harder-Narasimhan filtrations.
This was done through the introduction of chains of torsion classes as follows.

\begin{definition}\cite[Definition 2.1]{T-HN-filt}
A chain of torsion classes $\eta$ indexed by $[0,1]$ in an abelian category $\A$ is a set of torsion classes 
$$\eta:= \{\T_s \mid \T_0=\A, \T_1=\{0\} \text{ and } \T_s \subset \T_r \text{ if } r\leq s\}.$$
Equivalently, one can think of such a chain of torsion classes as an order-reversing function $\eta: [0,1] \to \Tors \A$.
We call such an $\eta$ \emph{quasi-Noetherian} (resp. \emph{weakly-Artinian}) if for every interval $(a,b) \subset [0,1]$ and $M \in \A$ there exists $s \in (a,b)$ such that $t_r A \hookrightarrow t_sA $ (resp. $s'\in (a,b)$ such that $t_{s'}A \hookrightarrow t_rA $) for all $r \in (a,b)$ and $A \in \A$; where $t_r M$ denotes the torsion subobject of $M$ with respect to the torsion class $\T_r$. 
We denote by $\mathfrak{T}(\A)$ we denote the set of all $\eta$ that are quasi-Noetherian and weakly-Artinian.
\end{definition}

For a chain of torsion classes $\eta = (\T_i)_{i\in[0,1]}$, we write, when there is no ambiguity, $\F_i$ to denote the torsionfree class corresponding to $\T_i$. We also continue the notation from the above definition in letting $t_r M$ (resp. $f_r M$) denote the torsion subobject (resp. torsionfree quotient object) of $M \in A$ with respect to the torsion pair $(\T_r, \F_r)$ at $r \in[0,1]$.

\begin{remark}\label{rmk:WA&QN}
We make some observations regarding the weakly-Artinian and quasi-Noetherian conditions.
\begin{enumerate}
\item The notion of weakly-Artinianity and quasi-Noetherianity are can be interpreted the stabilisation of a given chain of submodules.
    Indeed, let $\eta$ be a chain of torsion classes, $X$ be an object in $\A$.
    Then it follows from the definitions of torsion classes that if $r < s$ then $t_s X$ is naturally a subobject of $t_r X$ and $t_s t_r X$ is isomorphic to $t_s X$.
    Now, let $(a,b) \subset [0,1]$. 
    Then we can build a chain of subobjects of $X$ as follows.
    \begin{equation}\label{eq:chainoftorsion}\tag{\dag}
        0 \hookrightarrow t_b X \hookrightarrow \dots \hookrightarrow t_s X \hookrightarrow \dots \hookrightarrow t_r X \hookrightarrow \dots \hookrightarrow t_a X \hookrightarrow X 
    \end{equation}
    Hence $\eta$ is weakly-Artinian if the chain (\ref{eq:chainoftorsion}) stabilises to the left for every object in $X$ and every interval $(a,b)\subset [0,1]$.
    Dually, $\eta$ is quasi-Noetherian if the chain (\ref{eq:chainoftorsion}) stabilises to the right for every object in $X$ and every interval $(a,b)\subset [0,1]$.

    \item  Let $\A$ be a finite length category, for example $\A = \mod \Lambda$ for a finite dimensional algebra $\Lambda$. Then every chain of subobjects for an object in $\A$ is finite and therefore every chain of torsion classes is weakly-Artinian and quasi-Noetherian. 
    \item Let $\A$ be an abelian category with arbitrary coproducts, for example $\Mod \Lambda$. Then a chain of torsion classes $\eta$ in $\A$ is quasi-Noetherian and weakly-Artinian if and only if the set $ S = \{ i \in [0,1] \mid \mathcal{P}^\eta_i \neq \{0\}  \}$ is finite. Indeed, if this set is infinite,  then at each $i \in S$ take a non-zero $X_i \in \cP^\eta_i$ then $\eta$ does not  satisfy the weakly-Artinian nor quasi-Noetherian conditions at the coproduct $\coprod_{i \in S} X_i$.
    \end{enumerate}
\end{remark}

Let us give an example of an infinite chain of torsion classes in an abelian category of non-finite length that is weakly-Artinian and quasi-Noetherian.

\begin{example} \label{example:interesting chain}

    Let $k$ be a field and consider the category of functors from the poset $(\mathbb{R}_{\geq 0}, \leq)$ to $\operatorname{vect}_k$, the category of finite dimensional $k$-vector spaces. 
    Let $\A$ be the full subcategory of finitely presented functors. It is well known \cite{CB2015} that this is an abelian category with indecomposables corresponding to intervals $[x, y)$ where $x < y \in \mathbb{R}_{\geq 0} \cup \{ \infty\}$. Explicitly, the functor given by this interval is as follows
    \begin{align*}
        F_{[x,y)} :  \mathbb{R}_{\geq 0} & \to \operatorname{vect}_k 
        \\ a & \to \left\{ \begin{aligned} k & \quad a \in [x,y) \\ 0 & \quad \text{else.}
        \end{aligned} \right.
    \end{align*}
    A full set of isomorphism classes of indecomposable projectives is given by $\{P_x : = F_{[x,\infty)} \mid x \in \mathbb{R}_{\geq 0} \} $.
    Observe that there are no simple objects in $\A$.
    Hence $\A$ is not of finite length.
    
    Now, let $\beta : \mathbb{R}_{> 0}  \to (0,1) $ be an order preserving isomorphism and consider a countable divergent sequence $0= y_0 < y_1 < \dots $ in $\mathbb{R}_{\geq 0}$.
    We now define a chain of torsion classes $\eta = (\T_i)_{i \in [0,1]}$ as follows. 
    For $i \in (0,1)$ we define $\T_i$ to be the smallest torsion class containing $\{P_{y_j} \mid i \leq \beta(y_j) \}$ and, as always, $\T_0= \A$ and $\T_1 = \{0\}$. 
    Observe that the set $ S = \{ i \in [0,1] \mid \mathcal{P}^\eta(i) \neq \{0\}  \}$ is infinite.
    
    We claim that $\eta$ is weakly-Artinian and quasi-Noetherian. It is enough to verify this for the indecomposable objects of $\A$. To this end, let $F = F_{[x,y)} \in \A$ and $(a,b) \subset [0,1]$. 
    We consider the set $Z = \{ y_j \mid y_j \in [x,y) \text{ and } \beta(y_j) \in (a,b) \} $. Then if $Z$ is non-empty, it is finite. 
    Hence the chain of subobjects
    \[ F \supseteq t_a F \supseteq \dots \supseteq t_b F \] stabilises above at 
    $\beta(\inf Z)$ and below at $\beta (\sup Z)$. 
    If $Z$ is empty then we have that either $F = t_x F$ or $t_x F = 0$ for every $x\in(a,b)$ and there is nothing to show. 
    Thus, $\eta$ is weakly-Artinian and quasi-Noetherian. 
\end{example}

We note that the above example hints interesting connections with other mathematical disciplines.
Indeed, the category considered above has been deeply studied in topological data analysis \cite{OudotBook} and is reminiscent of the category of representations of the real line \cite{igusa2022continuous}.

\begin{lemma}\cite[Propositions 2.7 \& 2.9]{T-HN-filt} \label{lem:union-intersection-torsion}
Let $\eta = (\T_i)_{i\in[0,1]}$ be a chain of torsion classes in $\A$ and $(a,b) \subset [0,1]$ be an interval. Then
\begin{enumerate}
    \item If $\eta$ is quasi-Noetherian, then $(\bigcup_{r \in (a,b)} \T_r, \bigcap_{r \in (a,b)} \F_r$ ) is a torsion pair in $\A$. 
    \item If $\eta$ is weakly-Artinian, then $(\bigcap_{r \in (a,b)} \T_r, \bigcup_{r \in (a,b)} \F_r$ ) is a torsion pair in $\A$.
\end{enumerate}
\end{lemma}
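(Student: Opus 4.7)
The plan is to verify both statements by checking the two defining properties of a torsion pair: Hom-orthogonality of the two classes, and existence of a canonical short exact sequence for every object of $\A$. The key structural observation underpinning both parts is that for any fixed $M \in \A$, the family $(t_r M)_{r \in (a,b)}$ of torsion subobjects is monotone inside $M$: if $r \leq r'$ then $\T_{r'} \subseteq \T_r$, and hence $t_{r'} M \in \T_{r'} \subseteq \T_r$ is a subobject of $M$ in $\T_r$, so $t_{r'} M \subseteq t_r M$. The quasi-Noetherian and weakly-Artinian hypotheses tell us exactly when this monotone chain stabilises, which is what produces the torsion part of the canonical sequence.

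For part (1), the orthogonality $\Hom_\A(\bigcup_r \T_r, \bigcap_r \F_r) = 0$ is immediate, since for $X \in \T_r$ and $Y \in \bigcap_s \F_s \subseteq \F_r$ we have $\Hom_\A(X,Y)=0$ already in the torsion pair $(\T_r,\F_r)$. For the canonical sequence, I would fix $M \in \A$ and invoke quasi-Noetherianity to obtain $s \in (a,b)$ such that $t_r M \hookrightarrow t_s M$ for every $r \in (a,b)$. Combined with the automatic inclusion $t_s M \subseteq t_r M$ valid whenever $r \leq s$, this forces $t_r M = t_s M$ for every $r \in (a,s]$. Setting $tM := t_s M$ and $fM := M/tM$, we have $tM \in \T_s \subseteq \bigcup_r \T_r$ by construction, and it remains to check $fM \in \F_r$ for each $r \in (a,b)$. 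For $r \leq s$ this is $fM = f_r M \in \F_r$. For $r \geq s$ one uses $\T_r \subseteq \T_s \Rightarrow \F_s = \T_s^\perp \subseteq \T_r^\perp = \F_r$, so $fM = f_s M \in \F_s \subseteq \F_r$.

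Part (2) proceeds dually: orthogonality is identical, and for the canonical sequence I would apply weak-Artinianity to obtain $s' \in (a,b)$ with $t_{s'} M \hookrightarrow t_r M$ for every $r$, which together with $t_r M \subseteq t_{s'} M$ for $r \geq s'$ yields $t_r M = t_{s'} M$ on $[s', b)$. Taking $tM := t_{s'} M$ and $fM := M/tM = f_{s'} M$, one verifies $tM \in \T_r$ for every $r$ (using $\T_{s'} \subseteq \T_r$ for $r \leq s'$ and equality for $r \geq s'$), and trivially $fM \in \F_{s'} \subseteq \bigcup_r \F_r$. There is no deep obstacle here — the two finiteness hypotheses are tailor-made for this construction; the only point demanding care is the asymmetric behaviour on the two sides of the stabilisation index, where the direction of the inclusions $\F_s \subseteq \F_r$ (respectively $\T_{s'} \subseteq \T_r$) is exactly what is needed to extend the relevant property of the chosen $s$ (respectively $s'$) to the full interval $(a,b)$.
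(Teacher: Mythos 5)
Your proposal is correct. Note that the paper itself does not prove this lemma --- it is cited from \cite{T-HN-filt} --- but the remark that follows the lemma reveals the flavor of the reference's proof: one verifies Dickson's closure-properties characterisation (Theorem~\ref{thm:dicksontorsion}(2)/(3)), showing that $\bigcup_{r} \T_r$ is closed under quotients, extensions, and coproducts, with quasi-Noetherianity entering to handle infinite coproducts. Your argument takes a different and arguably more elementary route: you verify the two defining axioms of a torsion pair directly, with orthogonality immediate and the canonical short exact sequence produced explicitly at the stabilisation index $s$ furnished by quasi-Noetherianity. This bypasses the coproducts discussion entirely and gives the canonical decomposition of each $M$ in closed form ($tM = t_s M$, $fM = f_s M$); its only cost is that the closure properties of the union are not obtained as a byproduct. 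One small presentational point: when you write that quasi-Noetherianity gives $t_r M \hookrightarrow t_s M$ and then combine this with the automatic reverse inclusion to deduce $t_r M = t_s M$, you are in effect reading the condition as saying that the canonical inclusion $t_s M \subseteq t_r M$ is an isomorphism for $r \leq s$ --- that is, that the chain of torsion subobjects stabilises at $s$ --- which is precisely the interpretation offered in Remark~\ref{rmk:WA&QN}; with that gloss your argument is fully sound.
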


\begin{remark}
We make some observations in the situation of the above lemma.
\begin{enumerate}
    \item We have that $\bigcup_{ r \in (a,b)} \T_r = \bigcup_{ r> a} \T_r$ and $\bigcap_{r \in (a,b)} \T_r = \bigcap_{r <b} \T_r$. Similar equalities hold for the torsionfree classes. 
    \item The intersection of torsion(free) classes is again a torsion(free) class. Thus the above lemma  tells us that the stated unions in 1. and 2. are torsion (resp. torsionfree) classes. To show this, the fact that $\eta$ is quasi-Noetherian is used to show that the union in 1. is closed under coproducts.
\end{enumerate}
\end{remark}

Associated to every indexed chain of torsion classes $\eta \in \CT(\A)$ there is a set 
$$\P^\eta = \{ \P^\eta_t \mid t \in [0,1]\}$$
of full subcategories of $\A$ where each $\P^{\eta}_t$ is defined as follows.

\begin{definition}\cite[Definition 2.12]{T-HN-filt}\label{def:chain}
Consider a chain of torsion classes $\eta\in \CT(\A)$.
Then, for every $t \in [0,1]$ we define the category $\P_t$ of \textit{$\eta$-quasisemistable} objects of \textit{phase} $t$ as:
$$
\P^\eta_t= \begin{cases}
\bigcap\limits_{s>0} \F_s  & \text{ if $t = 0$}\\
\left(\bigcap\limits_{s<t} \T_s\right)  \cap \left(\bigcap\limits_{s>t} \F_s\right) & \text{ if $t \in (0,1)$}\\
\bigcap\limits_{s<1} \T_s  & \text{ if $t = 1$}
\end{cases}
$$
\end{definition}

\begin{remark} Observe that $\bigcup_{r >a} \T_r \subseteq  \bigcap_{r <a} \T_r$. 
    In the case that $\eta$ is weakly-Artinian and quasi-Noetherian, in light of Lemma~\ref{lem:union-intersection-torsion}, the subcategory $\cP_t$ is then the \emph{heart of the twin torsion pair} given by the above torsion classes. It follows from \cite[Theorem 3.2]{tattar2021torsion} that $\cP_t$ is a quasi-abelian category.
\end{remark}

One of the main results in \cite{T-HN-filt} shows that every indexed chain of torsion classes $\eta \in \fT(\A)$ induces a Harder-Narasimhan filtration for every non-zero object $M \in \A$.
The formal statement is the following.

\begin{theorem}\cite[Theorem 2.13]{T-HN-filt}\label{thm:HN-filt}
Let $\A$ be an abelian category and $\eta \in \CT(\A)$.
Then every object $M\in \A$ admits a Harder-Narasimhan filtration with respect to $\eta$.
That is a filtration 
$$M_0 \subset M_1 \subset \dots \subset M_n$$
such that:
\begin{enumerate}
\item $0 = M_0$ and $M_n=M$;
\item there exists $r_k \in [0,1]$ such that $M_k/M_{k-1} \in \P_{r_k}$ for all $1\leq k \leq n$;
\item $r_1 > r_2 > \dots > r_n$.
\end{enumerate}
Moreover this filtration is unique up to isomorphism.
\end{theorem}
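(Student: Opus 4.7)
The plan is to mimic the classical Harder--Narasimhan strategy: iteratively peel off the subobject of largest phase from the top of $M$ and then invoke a hom-vanishing between the categories $\P^\eta_t$ to obtain uniqueness. The weakly-Artinian and quasi-Noetherian hypotheses are used to guarantee both that the limits of torsion subobjects appearing in the construction genuinely exist in $\A$ and that the iteration terminates after finitely many steps.

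For \emph{existence}, fix a nonzero $M \in \A$ and set
\[ r_1 := \sup\{r \in [0,1] : t_r M \neq 0\}, \]
which is well-defined since $t_0 M = M$. Applying the weakly-Artinian condition to any interval $(a, r_1)$ produces some $s' \in (a, r_1)$ with $t_r M = t_{s'} M$ for all $r \in [s', r_1)$, and I would set $M_1 := t_{s'} M = \bigcap_{r < r_1} t_r M$. Because $\T_{s} \subseteq \T_r$ whenever $s > r$, the object $M_1 = t_{s'} M$ lies in $\T_r$ for every $r < r_1$; and since $t_r M = 0$ for $r > r_1$ we have $M \in \F_r$ there, so its subobject $M_1$ does too. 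Hence $M_1 \in \P^\eta_{r_1}$. Next I would repeat the construction on the quotient $M/M_1$: since $M/M_1 \cong f_{s'} M \in \F_{s'}$, one has $t_r(M/M_1) = 0$ for all $r \geq s'$, so the top phase $r_2$ of $M/M_1$ satisfies $r_2 \leq s' < r_1$. Iterating therefore yields a strictly decreasing sequence of phases $r_1 > r_2 > \cdots$ together with an ascending filtration $0 = M_0 \subset M_1 \subset M_2 \subset \cdots$ of $M$ with $M_k/M_{k-1} \in \P^\eta_{r_k}$.

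To see that the iteration terminates, I would prove that for every object $N \in \A$ the chain $\{t_r N : r \in [0,1]\}$ takes only finitely many distinct values. Suppose otherwise; then there is a sequence of indices along which the $t_{r_i} N$ are pairwise distinct, and after passing to a monotone subsequence we may assume $r_i \to r^* \in [0,1]$. Simultaneously applying the quasi-Noetherian and weakly-Artinian conditions to a small neighbourhood of $r^*$ produces points $s, s'$ in it such that the chain is constant on $(r^* - \delta, s]$ and on $[s', r^* + \delta)$; iterating this argument on the remaining middle interval $(s, s')$ and combining with the compactness of $[0,1]$ forces the chain to be eventually constant near $r^*$, contradicting the assumed accumulation of distinct values. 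This compactness-style reconciliation of the two stabilisation conditions is, I expect, the main technical obstacle.

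For \emph{uniqueness}, the central input is the hom-vanishing $\Hom_\A(X, Y) = 0$ whenever $X \in \P^\eta_r$, $Y \in \P^\eta_s$, $r > s$: pick any $u \in (s, r)$, note that $X \in \T_u$ while $Y \in \F_u$, and invoke the orthogonality of the torsion pair $(\T_u, \F_u)$. Given two HN filtrations of $M$ with phase sequences $r_1 > \cdots > r_n$ and $s_1 > \cdots > s_m$, consider the inclusion $M_1 \hookrightarrow M = N_m$ and compose successively with the quotient maps $N_k \twoheadrightarrow N_k/N_{k-1} \in \P^\eta_{s_k}$: if $r_1 > s_1$, then the hom-vanishing forces $M_1 \hookrightarrow M$ to factor through $N_0 = 0$, which is absurd. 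Hence $r_1 \leq s_1$, and by symmetry $r_1 = s_1$; the same factoring argument shows $M_1 \subseteq N_1$ and, by symmetry, $M_1 = N_1$. The uniqueness of the remainder then follows by induction on $n + m$ applied to the quotient $M/M_1$.
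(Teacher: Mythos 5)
Your overall strategy matches the one used in the cited source: peel off the top-phase torsion subobject, iterate on the quotient, and deduce uniqueness from $\Hom(\P_r,\P_s)=0$ for $r>s$. The construction of $M_1$ is correct (including the base case $r_1=0$, which you omit but which is harmless since then $M\in\bigcap_{s>0}\F_s=\P_0$ already), and the uniqueness argument is standard and sound.

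There is, however, a genuine thin spot in your termination argument. You propose to show that for each fixed $N$ the chain $\{t_rN : r\in[0,1]\}$ takes only finitely many values; that lemma is true, but by itself it does not bound the length of the iteration, because the construction replaces $M$ by successive \emph{quotients} $M/M_1$, $M/M_2$, $\dots$, and finiteness of $\{t_rN\}$ for each individual $N$ does not a priori prevent an infinite chain of quotients. The missing ingredient is that your construction actually yields $M_k = t_{s_k}M$ as subobjects of the original $M$ for a strictly decreasing sequence $s_1 > s_2 > \cdots$: indeed, since $s_{k+1} < r_{k+1}\leq s_k$ one has $M_k = t_{s_k}M \subseteq t_{s_{k+1}}M$, and $t_{s_{k+1}}M/M_k$ is a torsion subobject of $M/M_k$ with torsionfree quotient $M/t_{s_{k+1}}M$, so $t_{s_{k+1}}(M/M_k) = t_{s_{k+1}}M/M_k$, whence $M_{k+1} = t_{s_{k+1}}M$. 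Once this is observed, the $M_k$ live among the finitely many torsion subobjects of the single object $M$, and since they are strictly increasing the iteration stops. Alternatively, with $M_k=t_{s_k}M$ in hand you can bypass your auxiliary lemma entirely: if the $s_k$ were infinite they would accumulate at $s^*=\inf s_k$ from above, and the quasi-Noetherian condition applied to $(s^*,s_1)$ would force $t_{s_k}M$ to stabilise, contradicting strict growth of the $M_k$.

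Finally, a remark on your proof of the auxiliary lemma itself: the compactness argument with nested middle intervals is more complicated than needed. If $r_i\to r^*$ monotonically from above, quasi-Noetherianity on $(r^*,r^*+\delta)$ alone gives stabilisation on a right neighbourhood $(r^*,v]$ and contradicts distinctness of the $t_{r_i}N$; if from below, weak-Artinianity on $(r^*-\delta,r^*)$ alone does the job. The ``middle interval'' you worry about never needs to be examined once you commit to a monotone subsequence.
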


The notion of a slicing was introduced in \cite{Bridgeland2007} as an axiomatisation Harder-Narasimhan filtrations for triangulated categories. Here we introduce the corresponding concept for abelian categories. We also note that similar ideas have been studied in \cite{GKR2004} and also for exact categories \cite{Chen2010}.

\begin{definition}\label{def:slicing}
A \textit{slicing} $\P$ of the abelian category $\A$ consists of full additive subcategories $\P_r \subset \A$ for each $r \in [0,1]$ satisfying the following axioms:
\begin{enumerate}
\item If $M \in \P_r$ and $N \in \P_s$ with $r > s$, then $\Hom_{\A}(M, N)=0$;

\item For each nonzero object $M$ of $\A$ there exists a filtration 
$$0=M_0 \subset M_1 \subset \dots \subset M_n=M$$ 
of $M$ and a set 
$$\{ r_1, r_2, \dots, r_n \mid r_i > r_j  \text{ if $i < j$} \}\subset [0,1]$$
such that $M_i/ M_{i-1} \in \P_{r_i}$ which is unique up to isomorphism.
\end{enumerate}
We denote by $\Slice (\A)$ the set of all the slicings on an abelian category $\A$.
\end{definition}

It follows easily from Theorem~\ref{thm:HN-filt} that given a chain of torsion classes $\eta \in \CT(\A)$ then the set $\P_\eta$ defined above is a slicing in the sense of the previous definition. 
It was shown in \cite{T-HN-filt} that every slicing arises that way. 

\begin{theorem}\cite[Theorem~5.5]{T-HN-filt}\label{thm:slicingchains}
    Let $\eta \in \CT(\A)$ then $\cP_\eta$ is a slicing in $\A$. Moreover for every slicing $\P \in \Slice (\A)$ there is a chain of torsion classes $\eta \in \CT(\A)$ such that $\P = \P_\eta$.
\end{theorem}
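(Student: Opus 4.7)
The plan splits naturally into the two implications.

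First direction (from chains to slicings). Axiom (2) of Definition~\ref{def:slicing} is exactly the content of Theorem~\ref{thm:HN-filt} applied to $\eta$, so the only thing to check is the orthogonality axiom (1). Given $M \in \cP^\eta_r$ and $N \in \cP^\eta_s$ with $r>s$, I would pick any $t$ strictly between $s$ and $r$; one reads off directly from Definition~\ref{def:chain} that $M \in \T_t$ and $N \in \F_t$, and hence $\Hom_\A(M,N) = 0$ since $(\T_t,\F_t)$ is a torsion pair. The cases $r=1$ or $s=0$ require only a minor adjustment in how $t$ is chosen.

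Second direction (from slicings to chains). Given a slicing $\cP$, I would define the candidate chain $\eta$ by setting $\T_0 := \A$, $\T_1 := \{0\}$, and
\[
\T_r := \Filt\!\left(\bigcup_{s \in [r,1]} \cP_s\right) \qquad \text{for } r \in (0,1),
\]
with intended torsionfree complement $\F_r := \Filt(\bigcup_{s \in [0,r)} \cP_s)$. The central step is then to verify that $(\T_r,\F_r)$ is a torsion pair for every $r$: orthogonality $\Hom_\A(\T_r,\F_r) = 0$ will follow from axiom (1) of the slicing by a short induction on filtration length in both arguments, and the canonical short exact sequence for any $X \in \A$ is produced by cutting its HN filtration from axiom (2) at phase $r$: if $k$ is the largest index with $r_k \geq r$, then $X_k \in \T_r$ and $X/X_k \in \F_r$. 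The inclusion $\T_s \subseteq \T_r$ for $r \leq s$ is immediate.

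It then remains to verify that $\eta \in \fT(\A)$ and that $\cP_\eta = \cP$. The weakly-Artinian and quasi-Noetherian conditions will follow from finiteness of HN filtrations: for a fixed $M$ with HN phases $r_1 > \dots > r_n$, one has $t_r M = M_k$ where $k$ is the largest index with $r_k \geq r$, so $r \mapsto t_r M$ is piecewise constant with only finitely many jumps and therefore stabilises near each endpoint of any subinterval. For $\cP_\eta = \cP$, uniqueness of HN filtrations yields
\[
\bigcap_{s<r} \T_s = \{X \in \A \mid \text{every HN-phase of $X$ in $\cP$ is} \geq r\},
\]
and dually $\bigcap_{s>r}\F_s = \{X \mid \text{every HN-phase is } \leq r\}$; intersecting recovers $\cP_r$ exactly, and the boundary cases $r \in \{0,1\}$ are obtained by the corresponding one-sided intersections.

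The main technical difficulty I anticipate is the torsion-pair verification for $(\T_r,\F_r)$, especially closure of $\T_r$ under quotients (and dually of $\F_r$ under subobjects), which is not built into the $\Filt$ construction and requires combining the HN filtration of the quotient with the orthogonality axiom of $\cP$. A secondary subtlety is the boundary behaviour: the convention $\T_1 = \{0\}$ means a non-zero slice $\cP_1$ does not appear inside any single $\T_r$ but must nevertheless be recovered as $\cP^\eta_1 = \bigcap_{s<1}\T_s$; this is precisely why the endpoint values must be imposed by hand rather than by extending the formula for $\T_r$ to all of $[0,1]$.
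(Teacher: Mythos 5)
Your proof sketch is correct, and since the paper only cites this result from \cite{T-HN-filt} rather than reproducing a proof, there is no in-paper argument to compare against; your approach (take $\Filt$-closures of the upper slices as candidate torsion classes, verify orthogonality from axiom (1), cut the HN filtration from axiom (2) to produce the canonical short exact sequences, and read off the quasi-Noetherian/weakly-Artinian conditions and the slicing from finiteness and uniqueness of HN filtrations) is the natural one and is the same construction the paper itself uses in analogous situations, e.g.\ in the proof of Theorem~\ref{thm:chambers} it sets $\T''_r = \Filt\bigl(\bigcup_{t>r}\P'_t\bigr)$, differing from your $\bigcup_{s\in[r,1]}$ only by an inconsequential strict-vs.-weak inequality. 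One correction worth making to your own assessment: the ``main technical difficulty'' you anticipate, namely proving $\T_r$ is closed under quotients, is not there. The definition of torsion pair used in this paper requires only the orthogonality $\Hom_\A(\T_r,\F_r)=0$ and the existence of the canonical short exact sequence for every object, and you have already established both; closure of $\T_r$ under quotients (and of $\F_r$ under subobjects) is then an automatic consequence via Theorem~\ref{thm:dicksontorsion}, not an additional condition to be checked. Likewise $\F_r=\T_r^\perp$ then follows for free.
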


\subsection{Weak stability conditions}

The notion of weak stability condition for an abelian category was introduced by Joyce in \cite{Joyce2007}, as a generalisation of the stability conditions introduced by Rudakov in \cite{Rudakov1997}.
In this subsection we recall the definition and prove/recall some of their properties.

\begin{definition}\cite[Definition 4.1]{Joyce2007}
A \textit{weak stability condition} on $\A$ is a function $\phi :  \Obj^*(\mathcal{A}) \to [0,1]$ from the non-zero objects, $\Obj^*(\A)$, of $\A$ to $[0,1]$, which is constant on isomorphism classes, such that for each short exact sequence  $0 \to L \to M \to N \to 0$ of non-zero objects in $\mathcal{A}$ we have that either 
$$\phi(L) \leq \phi(M) \leq \phi(N) \quad\text{ or } \quad\phi(L) \geq \phi(M) \geq \phi(N).$$

A weak stability condition, $\phi$, is a \emph{stability condition} in the sense of Rudakov \cite{Rudakov1997} if for all short exact sequences as above, we have that the so called see-saw condition:
$$\phi(L) < \phi(M) < \phi(N) \quad\text{ or } \quad \phi(L) = \phi(M) = \phi(N) \quad\text{ or } \quad\phi(L) > \phi(M) > \phi(N) $$ is satisfied.

A weak stability condition (on $\A$), $\phi$, is \textit{quasi-Noetherian} if for all chains of proper subobjects $A_0 \subset A_1 \subset \dots \subset X$ in $\A$, there exists an $n>0$ such that $\phi(A_n) > \phi(A_{n+1}/ A_n)$. Similarly, $\phi$ is \textit{weakly-Artinian} if for all chains of proper subobjects $X = A_0 \supset A_1 \supset \dots $ in $\A$, there exists an $n>0$ such that $\phi(A_{n+1}) < \phi(A_n / A_{n+1})$. 

By $\WStab \A$ we denote the class of all quasi-Noetherian and weakly-Artinian conditions on $\A$ with codomain $[0,1]$.
\end{definition}

\begin{remark} We make some remarks. 
\begin{enumerate}
    \item In the definition of weak stability condition we chose the closed interval $[0,1]$ as the codomain of the map $\phi$.
However, the definition of weak stability condition works for every totally ordered set. 
    \item We note that Joyce's original definition ask the map $\phi: \Obj^*(\A) \to [0,1]$ to factor through a quotient of the Grothendieck group $K_0(\A)$. 
In this note we drop this assumption from the definition.
   
    \item If $\A$ is a finite length category, then every weak stability condition on $\A$ is weakly-Artinian and quasi-Noetherian
\end{enumerate}
\end{remark}

\begin{definition}
Let $\phi$ be a weak stability condition on $\mathcal{A}$.
A non-zero object $M$ in $\mathcal{A}$ is said to be \textit{$\phi$-stable} (or \textit{$\phi$-semistable}) if every nontrivial subobject $L\subset M$ satisfies $\phi(L) < \phi(M / L)$ ($\phi(L)\leq \phi(M / L)$, respectively). 
\end{definition}

The following fact will be useful.

\begin{lemma} \label{lem:max-destab-subobj}
Let $\phi \in \WStab \A$. Then for all $0 \neq M \in \A$ there exists a subobject $M'$ of $M$ such that $M'$ is $\phi$-semistable and $\phi(M') \geq \phi (L)$ for all subobjects $L$ of $M$. Dually, there exists a quotient $M''$ of $M$ that is $\phi$-semistable such that $\phi(M'') \leq \phi (N)$ for all quotients $N$ of $M$. 
\end{lemma}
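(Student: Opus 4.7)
My plan is to prove the existence of $M'$; the dual statement for $M''$ follows by a symmetric argument in which weakly-Artinianity plays the role that quasi-Noetherianity plays here. The strategy has two stages: first, produce a subobject $M_0 \subseteq M$ attaining the maximal phase $r := \sup\{\phi(L) \mid 0 \neq L \subseteq M\}$; then, iteratively pass to destabilising subobjects (which, as we will see, automatically have phase $r$) to land on a semistable one.

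The core tool throughout is the following consequence of the weak stability axiom applied to $0 \to L_1 \to L_2 \to L_2/L_1 \to 0$ for subobjects $L_1 \subsetneq L_2 \subseteq M$: either $\phi(L_1) \leq \phi(L_2) \leq \phi(L_2/L_1)$ or $\phi(L_1) \geq \phi(L_2) \geq \phi(L_2/L_1)$. In particular, $\phi(L_1) < \phi(L_2)$ forces $\phi(L_2/L_1) \geq \phi(L_2) > \phi(L_1)$, so via quasi-Noetherianity, $M$ admits no infinite strictly ascending chain of subobjects whose phases are strictly increasing.

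\emph{First stage.} To show $r$ is attained, I would argue by contradiction: supposing $\phi(L) < r$ for every subobject $L$ of $M$, I would build an ascending chain $L_0 \subsetneq L_1 \subsetneq \cdots$ by taking $L_{n+1} = L_n + K_n$ for a subobject $K_n \subseteq M$ with $\phi(K_n) > \phi(L_n)$. Combining the weak stability axiom applied to $0 \to L_n \cap K_n \to K_n \to K_n/(L_n \cap K_n) \to 0$ (using the isomorphism $K_n/(L_n\cap K_n) \cong (L_n+K_n)/L_n$) with that applied to $0 \to L_n \to L_n + K_n \to (L_n+K_n)/L_n \to 0$, a case analysis on $\phi(L_n \cap K_n)$ either produces $\phi((L_n + K_n)/L_n) > \phi(L_n)$, so that quasi-Noetherianity is contradicted after iteration, or exhibits a subobject $L_n \cap K_n$ of $L_n$ with phase strictly greater than $\phi(L_n)$ that can be substituted into the subsequent step. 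A careful bookkeeping argument then forces the iteration to terminate at some $L_N$ with $\phi(L_N) = r$.

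\emph{Second stage.} Given $M_0$ with $\phi(M_0) = r$, suppose $M_0$ is not semistable and choose a destabilising $M_1 \subsetneq M_0$, so $\phi(M_1) > \phi(M_0/M_1)$. Weak stability applied to $0 \to M_1 \to M_0 \to M_0/M_1 \to 0$ forces $\phi(M_1) \geq \phi(M_0) = r$; since $M_1 \subseteq M$ also $\phi(M_1) \leq r$, so $\phi(M_1) = r$ while $\phi(M_0/M_1) < r$. Iterating yields a descending chain $M_0 \supsetneq M_1 \supsetneq \cdots$ of subobjects of $M$, all of phase $r$, with $\phi(M_n/M_{n+1}) < r$ for every $n$. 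The weakly-Artinian hypothesis, however, produces some $n$ with $\phi(M_{n+1}) < \phi(M_n/M_{n+1})$, i.e.\ $r < \phi(M_n/M_{n+1})$, contradicting $\phi(M_n/M_{n+1}) < r$. Hence the iteration must terminate, and the final $M_n$ is the desired semistable $M'$ with $\phi(M') = r \geq \phi(L)$ for every subobject $L$ of $M$.

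The main obstacle is the first stage. Because the weak stability axiom only furnishes non-strict inequalities, $\phi$ may plateau along an ascending chain, so the classical argument for ordinary Rudakov stability conditions (which extracts a strictly increasing sequence of phases from a chain) does not apply directly. The case analysis on $\phi(L_n \cap K_n)$ must handle both the ``straightforward'' case in which the sum $L_n + K_n$ yields the next chain member and the ``backtracking'' case in which $L_n$ already contains a subobject of larger phase; ensuring that this backtracking does not loop indefinitely but eventually produces a subobject of phase exactly $r$ is the delicate point.
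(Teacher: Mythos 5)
Your second stage is correct and cleanly done: if $M_0$ attains the maximal phase $r$ and is not semistable, the destabilising subobject $M_1$ is forced (by the weak see-saw applied to $0 \to M_1 \to M_0 \to M_0/M_1 \to 0$) to satisfy $\phi(M_1) \geq \phi(M_0) = r$, hence $\phi(M_1) = r$ and $\phi(M_0/M_1) < r$, and the weakly-Artinian condition terminates the resulting descending chain. This is essentially the same extraction of the maximal destabilising subobject that underlies the citation the paper uses. Note, though, that the paper does not reprove the lemma at all: it simply invokes \cite[Theorem 4.4, Steps 1 and 4]{Joyce2007} and the transitivity of subobjects, whereas you try to rebuild the argument from first principles.

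The problem is your first stage, and you correctly flagged it as the delicate point but did not resolve it, so there is a genuine gap. Write $r = \sup\{\phi(L) \mid 0 \neq L \subseteq M\}$. Your ascending chain $L_{n+1} = L_n + K_n$ with $\phi(K_n) > \phi(L_n)$ is only guaranteed to yield $\phi(L_{n+1}/L_n) > \phi(L_n)$ when $\phi(L_n \cap K_n) \leq \phi(K_n)$; if you always land in this case you do contradict quasi-Noetherianity, as you say. But in the ``backtracking'' case $\phi(L_n \cap K_n) > \phi(K_n)$ the object $L_n \cap K_n$ is a \emph{subobject} of $L_n$ with \emph{larger} phase. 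Iterating the backtracking case produces a descending chain $L_n \supsetneq L_n^{(1)} \supsetneq L_n^{(2)} \supsetneq \cdots$ whose phases strictly increase; applying the weak see-saw to $0 \to L_n^{(k+1)} \to L_n^{(k)} \to L_n^{(k)}/L_n^{(k+1)} \to 0$ gives $\phi\bigl(L_n^{(k)}/L_n^{(k+1)}\bigr) \leq \phi\bigl(L_n^{(k)}\bigr) < \phi\bigl(L_n^{(k+1)}\bigr)$, which is \emph{compatible} with the weakly-Artinian condition and yields no contradiction. So neither finiteness hypothesis applies to the chain produced by repeated backtracking, and the sequence of subobjects you manufacture is not monotone in any direction that the two conditions constrain. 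The claim that ``a careful bookkeeping argument then forces the iteration to terminate'' is exactly what would need to be proved, and it is not obvious that it is even true with only the weakly-Artinian and quasi-Noetherian hypotheses on $\phi$. Joyce's original argument works with a Noetherian ambient category (in which case your ascending chain trivially terminates) and with $\phi$ factoring through $K_0(\A)$; the present paper drops the $K_0$ assumption and replaces Noetherianity of $\A$ by quasi-Noetherianity of $\phi$, so you cannot silently use those crutches. As written, the attainment of $r$ is not established and the proof is incomplete. A correct route is either to cite Joyce as the paper does, or to first produce a semistable subobject via the descending-chain argument (Joyce's Step 1, which really only needs weakly-Artinianity), and then argue that the supremum over \emph{semistable} subobjects is attained---which is the genuine content of Joyce's Step 4 and is not a one-line consequence of the two chain conditions.
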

\begin{proof}
Let $M \in \A$ and let $L$ be a subobject of $M$. By \cite[Theorem 4.4, Step 4]{Joyce2007} there exists a $\phi$-semistable subobject $M'$ of $M$ (resp. $L'$ of $L$) such that $\phi(M') \leq \phi(S)$  (resp. $\phi(L') \leq \phi(S)$)  for every $\phi$-semistable subobject $S$ of $M$ (resp. of $L$). 
Moreover, by \cite[Theorem 4.4, Step 1]{Joyce2007} we have that $\phi(L') \geq \phi(L)$.  Note that by the transitivity of subobjects, $L'$ is a $\phi$-semistable subobject of $M$, thus  \[ \phi(L) \leq \phi(L') \leq \phi(M')\] and we are done. 
\end{proof}

\begin{remark}
In \cite[Section 4]{Joyce2007}, Joyce works with the assumptions that a weak stability condition is weakly-Artinian and that the ambient abelian category is Noetherian. We note that our replacement of the second condition by quasi-Noetherianity of the weak stability condition does not affect the validity of the results in \cite[Section 4]{Joyce2007}.
\end{remark}

Like for stability conditions, we will see that weak stability conditions induce chain of torsion classes in $\A$. 
This fact is a direct consequence of the next proposition. 
The proof of this proposition is a streamlined version of \cite[Proposition 2.14]{BSTpath}.

\begin{proposition}\label{prop:stabtorsion}
Let $\phi \in \WStab \A$.
For $p \in [0,1]$ consider the following subcategories of $\A$:
\begin{itemize}
\item $\T_{\geq p} := \{ M \in \A \mid \phi(N) \geq p \text{ for all quotients $N$ of $M$} \} \cup \{ 0 \}$; 
\item $\T_{>p} := \{ M \in \A \mid \phi(N) > p \text{ for all quotients $N$ of $M$} \} \cup \{ 0 \}$;
\item $\F_{\leq p} := \{ M \in \A \mid \phi(L) \leq p \text{ for all subobjects $L$ of $M$} \} \cup \{ 0 \}$ and; 
\item $\F_{< p} := \{ M \in \A \mid \phi(L) < p \text{ for all subobjects $L$ of $M$} \} \cup \{ 0 \}$. 
\end{itemize}
Then $(\T_{\geq p}, \F_{<p})$ and $(\T_{> p}, \F_{\leq p})$ are torsion pairs in $\A$.
\end{proposition}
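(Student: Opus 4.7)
The goal is to show that both $(\T_{\ge p}, \F_{<p})$ and $(\T_{>p}, \F_{\le p})$ are torsion pairs in $\A$. I will treat the first in detail; the second follows by the same argument with strict and non-strict inequalities swapped throughout. My strategy is to verify the two defining conditions of a torsion pair directly: (i) $\Hom$-vanishing between the two subcategories, and (ii) existence of a canonical short exact sequence for each $M \in \A$.

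For (i), let $f \colon T \to F$ be a morphism with $T \in \T_{\ge p}$ and $F \in \F_{<p}$. The image $\im(f)$ is simultaneously a quotient of $T$ (so either zero or of phase $\ge p$) and a subobject of $F$ (so either zero or of phase $<p$). These two conditions are incompatible for a nonzero image, forcing $\im(f) = 0$ and hence $f = 0$.

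For (ii), I would invoke Joyce's Harder--Narasimhan theorem for weak stability conditions \cite[Theorem~4.4]{Joyce2007}, which applies precisely because $\phi$ is both weakly-Artinian and quasi-Noetherian. This produces, for each nonzero $M$, a filtration
\[ 0 = M_0 \subsetneq M_1 \subsetneq \dots \subsetneq M_n = M \]
with $\phi$-semistable graded pieces of strictly decreasing phases $\phi_1 > \dots > \phi_n$. Let $k$ be the largest index for which $\phi_k \ge p$ (or $k = 0$ if no such index exists), and set $tM := M_k$ and $fM := M/M_k$. To verify $tM \in \T_{\ge p}$, the key observation is that a semistable object $N$ of phase $\phi$ has every nonzero quotient of phase $\ge \phi$: indeed semistability gives $\phi(L) \le \phi(N/L)$ for any subobject $L$, and the weak-stability inequality applied to $0 \to L \to N \to N/L \to 0$ then forces $\phi(N/L) \ge \phi(N)$. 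Given a nonzero quotient $Q = tM/L$, the images $N_i := (M_i + L)/L$ form a filtration of $Q$ whose graded pieces are quotients of the semistable $M_i/M_{i-1}$. Inducting on $i$ and using that $\phi(Z) \ge p$ whenever a short exact sequence $0 \to X \to Z \to Y \to 0$ has $\phi(X), \phi(Y) \ge p$ (another direct consequence of the weak-stability inequality) yields $\phi(Q) \ge p$. Dually one proves $fM \in \F_{<p}$, this time using that subobjects of a semistable $N$ of phase $\phi$ have phase $\le \phi$ and that the middle term of a short exact sequence with both ends of phase $<p$ is itself of phase $<p$.

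The principal technical burden will be the induction for $tM \in \T_{\ge p}$ and its dual: one must handle both branches of the ``see-saw'' inequality in the definition of a weak stability condition uniformly, and deal with the degenerate situations in which some $N_i/N_{i-1}$ or $N_{i-1}$ vanish. Once the canonical short exact sequence and the $\Hom$-vanishing are in place, $(\T_{\ge p}, \F_{<p})$ is a torsion pair directly from the definition, with no further appeal to Theorem~\ref{thm:dicksontorsion} required.
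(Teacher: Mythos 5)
Your proof is correct, but it takes a genuinely different route from the paper's. The paper verifies the double-orthogonal characterization of a torsion pair from Theorem~\ref{thm:dicksontorsion}(4): after the same $\Hom$-vanishing step, it shows $\T_{\geq p} = {}^\perp\F_{<p}$ (and dually) by taking, for $X$ with $\Hom_\A(X, \F_{<p}) = 0$, a $\phi$-semistable quotient $M'$ of $X$ minimizing phase among all quotients (Lemma~\ref{lem:max-destab-subobj}, which cites only two isolated steps from Joyce's proof rather than the full HN theorem). If $\phi(M') < p$ then $M' \in \F_{<p}$ and the projection $X \twoheadrightarrow M'$ is a nonzero map contradicting the hypothesis; hence $\phi(M') \geq p$ and so $X \in \T_{\geq p}$. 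Your argument instead constructs the canonical short exact sequence explicitly by truncating Joyce's Harder--Narasimhan filtration at the phase threshold $p$ and propagating the phase bound through the induced filtrations of quotients of $tM$ and subobjects of $fM$. Both are valid; yours is more explicit and gives the canonical decomposition directly, at the cost of a longer induction with more degenerate cases to track, and it leans on the full force of Joyce's Theorem~4.4. That is not logically circular, since Joyce proved it independently, but it sits a little awkwardly in this particular paper, whose design is to re-derive exactly that HN result (Corollary~\ref{cor:stabHNfilt}) from the present proposition via the chain-of-torsion-classes machinery; the paper's argument deliberately avoids invoking the HN theorem wholesale so that its later derivation of Corollary~\ref{cor:stabHNfilt} gives a genuinely alternative proof rather than a roundtrip. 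The paper's approach is also shorter, since Dickson's characterization of torsion pairs spares one from constructing the canonical short exact sequence by hand.
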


\begin{proof}
We only prove that $(\T_{\geq p}, \F_{< p})$ is a torsion pair in $\A$ using the charactersisation of Theorem \ref{thm:dicksontorsion}(4), whence a similar proof also works for $(\T_{>p}, \F_{\leq p})$.
Let $f: M \to N$, where $M \in \T_{\geq p}$ and $ N \in \F_{< p}$.
Then we have that $\im f$ is a quotient of $M$ and a subobject of $N$. 
If $\im f$ is non-zero,  we have a contradiction as $\phi$ is a weak stability function. 
That is $\Hom_{\A} (M , N)=0$.

Now, consider $X$ such that $\Hom_\A (X,Y)=0$ for every $Y \in \F_{<p}$.
By Lemma~\ref{lem:max-destab-subobj} there exists a quotient $M'$ of $X$ such that $M'$ is $\phi$-semistable and $\phi(M')\leq \phi(N)$ for every quotient $N$ of $X$.
Since $\Hom_\A (X,Y)=0$ for every $Y \in \F_{<p}$, we conclude that $p \leq \phi(M')$.
Hence $\phi(N) \geq p$ for every quotient $N$ of $X$.
Then $X \in \T_{\geq p}$ by definition. 
One shows that every $Y$ such that $\Hom_\A (\T_{\geq p},Y)=0$ belongs to $\F_{> p}$ similarly.
\end{proof}

\begin{corollary}\label{prop:stab-chain}
Let $\phi \in \WStab \A$.
Then $\phi$ induces two chains of torsion classes $\eta^{-}_{\phi}, \eta^{+}_{\phi} \in \mathfrak{T}(\A)$ defined as follows
\begin{align*}
\eta^{+}_{\phi}:= & \{\T_0= \A, \T_1=\{0\} \text{ and }\T_s=\T_{\geq s} \mid s\in (0,1)\} \text{ and}
 \\ \eta^{-}_{\phi}:=& \{\T_0= \A, \T_1=\{0\} \text{ and }\T_s=\T_{> s} \mid s\in (0,1)\}.   
\end{align*}
\end{corollary}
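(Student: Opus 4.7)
The plan is to deduce the corollary from Proposition~\ref{prop:stabtorsion} together with some routine checks. I would proceed in three steps.

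First, for each $s \in (0,1)$, Proposition~\ref{prop:stabtorsion} supplies that $\T_{\geq s}$ and $\T_{>s}$ are torsion classes in $\A$. Hence the subcategories listed in the definitions of $\eta^+_\phi$ and $\eta^-_\phi$ are torsion classes, and this extends trivially to the endpoint choices $\T_0 = \A$ and $\T_1 = \{0\}$.

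Second, the order-reversing property needed for a chain of torsion classes is immediate from the defining inequalities: if $r \leq s$, then demanding $\phi(N) \geq s$ (resp.\ $\phi(N) > s$) on every quotient $N$ is strictly stronger than demanding $\phi(N) \geq r$ (resp.\ $\phi(N) > r$), whence $\T_{\geq s} \subseteq \T_{\geq r}$ and $\T_{>s} \subseteq \T_{>r}$. The extremal assignments $\T_0 = \A$ and $\T_1 = \{0\}$ automatically preserve monotonicity, since $\{0\}$ sits inside every torsion class and $\A$ contains all of them. Thus both $\eta^\pm_\phi$ satisfy Definition~2.3.

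Third, and most substantively, one must check that both chains lie in $\mathfrak{T}(\A)$, i.e., are quasi-Noetherian and weakly-Artinian. Fix an interval $(a,b) \subset [0,1]$ and an object $M \in \A$; I would argue the quasi-Noetherian property for $\eta^+_\phi$ by contradiction (the other three cases being dual or identical). If it failed, one would extract a decreasing sequence $r_1 > r_2 > \cdots$ in $(a,b)$ yielding a strictly ascending chain of subobjects $t_{r_1} M \subsetneq t_{r_2} M \subsetneq \cdots$ of $M$. Now membership $t_{r_n} M \in \T_{\geq r_n}$ produces a lower bound on the phase of every quotient of $t_{r_n} M$ (in particular on $\phi(t_{r_n}M)$), while the embedding $t_{r_{n+1}} M / t_{r_n} M \hookrightarrow M/t_{r_n} M \in \F_{<r_n}$ gives an upper bound on the phase of every subobject of $t_{r_{n+1}} M/t_{r_n} M$. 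Combining these bounds with the quasi-Noetherian hypothesis on $\phi$ applied to the ascending chain is what should furnish the contradiction. The weakly-Artinian case for $\eta^+_\phi$ is handled dually with a descending chain of torsion parts $t_{s_1} M \supsetneq t_{s_2} M \supsetneq \cdots$ with $s_n$ increasing, and $\eta^-_\phi$ is treated identically with $\T_{>s}$, $\F_{\leq s}$ replacing $\T_{\geq s}, \F_{<s}$.

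The hard part is the third step: the phase bounds produced by the torsion/torsionfree structure of $\phi$ only guarantee $\phi$-inequalities at the level provided by the indices $r_n$, and one must carefully convert the bare existential statement of the quasi-Noetherian (resp.\ weakly-Artinian) hypothesis on $\phi$ into an actual exclusion of the constructed infinite strict chain — without conflating the weak inequalities in a weak stability condition with the strict ones available in Rudakov's setting.
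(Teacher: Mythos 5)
Your Steps 1 and 2 are correct and coincide exactly with the paper's argument: the torsion classes come from Proposition~\ref{prop:stabtorsion}, and the order-reversing property is immediate from the defining inequalities.

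The problem is your Step 3, and specifically the case you chose to attempt. With the ascending chain $t_{r_1}M\subsetneq t_{r_2}M\subsetneq\cdots$ ($r_1>r_2>\cdots$) you correctly derive the phase bounds: $t_{r_n}M\in\T_{\geq r_n}$, hence $\phi(t_{r_n}M)\geq r_n$, and $t_{r_{n+1}}M/t_{r_n}M\hookrightarrow f_{r_n}M\in\F_{<r_n}$, hence $\phi(t_{r_{n+1}}M/t_{r_n}M)<r_n$. But these two bounds together yield $\phi(t_{r_n}M)>\phi(t_{r_{n+1}}M/t_{r_n}M)$ for \emph{every} $n$ — which is precisely the conclusion of the quasi-Noetherian condition on $\phi$, not its negation. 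So applying the $\phi$-quasi-Noetherian hypothesis to this chain cannot produce a contradiction; the chain you built is automatically ``good''. You sensed that something was delicate here, but the failure is concrete, not merely a matter of bookkeeping weak versus strict inequalities. Your parenthetical ``(the other three cases being dual or identical)'' is where the gap hides, because the four cases are not symmetric in this respect.

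The paper instead proves the weakly-Artinian case directly: a failure yields a \emph{descending} chain $t_{u_1}M\supsetneq t_{u_2}M\supsetneq\cdots$ with $u_1<u_2<\cdots$, and $\phi$ weakly-Artinian supplies an $n$ with $\phi(t_{u_{n+1}}M)<\phi(t_{u_n}M/t_{u_{n+1}}M)$; since the identical torsion/torsionfree bounds give $\phi(t_{u_{n+1}}M)\geq u_{n+1}>\phi(t_{u_n}M/t_{u_{n+1}}M)$, this is a genuine contradiction. In other words, the orientation of the chain matters: a descending chain of torsion subobjects violates the weakly-Artinian hypothesis on $\phi$, but an ascending one does not violate the quasi-Noetherian hypothesis in the naive way you propose. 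If you want to finish your proof along these lines, switch to the weakly-Artinian case (which closes cleanly) and work harder on quasi-Noetherianity — the cheap route is to invoke the existence of finite Harder–Narasimhan filtrations (\cite[Theorem 4.4]{Joyce2007}, used via Lemma~\ref{lem:max-destab-subobj}), which forces $t_pM$ to take only finitely many values and hence both stabilisation conditions hold at once.
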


\begin{proof}
We prove the statement for $\eta^{+}_{\phi}$, whence the argument is similar for $\eta^{-}_{\phi}$.

It was shown in Proposition \ref{prop:stabtorsion} that $\T_{\geq s }$ is a torsion class for every $s \in [0,1]$.
Suppose that $r \leq s$ and let $M \in \T_s$.
Then $\phi(N) \geq s$ for all quotients $N$ of $M$ by definition of $\T_{\geq s}$.
In particular $\phi(N) \geq s \geq r$ for all quotient $N$ of $M$.
Therefore $M \in \T_{\geq r}$.

It remains to show that $\eta_\phi^{+}$ is weakly-Artinian and quasi-Noetherian, we will only show the first of these, whence the other follows from a similar argument.
To this end, let $M \in \Obj^* (\A)$ and let $(a,b) \subset [0,1]$ be an interval. 
If $\eta^{+}_\phi$ is not weakly-Artinian, then for all $r \in (a,b)$ there exists $s \in (r,b)$ such that $t_r M$ is a proper subobject of $t_s M$. 
Using this, we construct an infinite chain of proper subobjects of $M$ indexed by a subset $I$ of $(a,b)$:
$$ M \supset t_a M \supset \dots \supset t_r M \supset t_s M \supset \dots  \supset t_b M. $$
By the weakly-Artinian property of $\phi$, there exists $u,v  \in I$ with $u < v$ such that $\phi (t_uM) < \phi( t_v M / t_u M) = \phi (f_u t_v M)$. However, as $t_u M \in \T_{\geq  u}$ and $f_u t_v M \in \F_{< u}$ we have that $\phi(t_uM) \geq u$ and $\phi (f_u t_v M) < u$ which is a contradiction. 
\end{proof}

The key result that allow us to compare chains of torsion classes and weak stability condition is the following.

\begin{theorem}\label{thm:semistabchain}
Let $\phi \in \WStab \A$, $\eta^{+}_{\phi}, \eta^{-}_{\phi}\in \CT(\A)$ as above, and let $t \in [0,1]$.
Then 
$$\P^{+}_t= \P^{-}_t = \{M \in \A \mid M \text{ is $\phi$-semistable and } \phi(M)=t\} \cup \{0\},$$
where $\P^{+}_\phi$ and $\P^{-}_\phi$ are the slicings of $\eta^{+}_{\phi}$ and $\eta^{-}_{\phi}$, respectively.
\end{theorem}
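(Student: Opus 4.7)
The plan is to reduce both $\P^+_t$ and $\P^-_t$ to a single explicit description in terms of $\phi$, and then match that description with the set of $\phi$-semistable objects of phase $t$.

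The first step is to unwind Definition~\ref{def:chain}. For $t \in (0,1)$ we have
$$\P^+_t = \Bigl(\bigcap_{s<t} \T_{\geq s}\Bigr) \cap \Bigl(\bigcap_{s>t} \F_{<s}\Bigr), \qquad \P^-_t = \Bigl(\bigcap_{s<t} \T_{>s}\Bigr) \cap \Bigl(\bigcap_{s>t} \F_{\leq s}\Bigr).$$
The key elementary observation is that for a real number $x$ the statements ``$x \geq s$ for all $s < t$'' and ``$x > s$ for all $s < t$'' are both equivalent to $x \geq t$; dually, ``$x < s$ for all $s > t$'' and ``$x \leq s$ for all $s > t$'' are both equivalent to $x \leq t$. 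Applying this pointwise to the $\phi$-values of quotients (on the torsion side) and subobjects (on the torsionfree side) yields
$$\P^+_t \;=\; \P^-_t \;=\; \{M \in \A \mid \phi(L)\leq t \text{ for every subobject } L\subseteq M,\ \phi(N)\geq t \text{ for every quotient } N \text{ of } M\} \cup \{0\}.$$
The endpoint cases $t=0$ and $t=1$ follow by the same argument, using only the relevant single intersection from the definition.

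The second step is to match this description with the set $\{M \mid M \text{ is $\phi$-semistable and } \phi(M) = t\} \cup \{0\}$. If $M$ is $\phi$-semistable with $\phi(M) = t$, then for any short exact sequence $0 \to L \to M \to N \to 0$ of nonzero objects, the weak-stability dichotomy combined with the semistability inequality $\phi(L) \leq \phi(N)$ forces $\phi(L) \leq \phi(M) = t \leq \phi(N)$; indeed, the opposite weak-stability alternative $\phi(L) \geq \phi(M) \geq \phi(N)$ together with $\phi(L)\leq \phi(N)$ collapses to equalities, which still places $M$ in our description. Conversely, if $M$ satisfies the explicit description, then taking $M$ itself as subobject and quotient of itself yields $\phi(M) = t$, and for any nontrivial short exact sequence $0 \to L \to M \to N \to 0$ we have $\phi(L) \leq t \leq \phi(N)$, so $M$ is $\phi$-semistable of phase $t$.

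The only mild subtlety is the limiting argument in the first step: one must verify that the strict and non-strict inequalities on $s$ both collapse to the non-strict inequality on $t$, so that the intersections indexed over $s < t$ (respectively $s > t$) agree for $\eta^+_\phi$ and $\eta^-_\phi$, even though the defining torsion classes $\T_{\geq s}$ and $\T_{>s}$ (and $\F_{<s}, \F_{\leq s}$) differ. Beyond this bookkeeping, the proof is a direct application of the definitions of $\phi$-semistability and of the subcategories appearing in Proposition~\ref{prop:stabtorsion}.
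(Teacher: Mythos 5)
Your proposal is correct and follows essentially the same route as the paper: both reduce $\P^\pm_t$ to $\T_{\geq t}\cap\F_{\leq t}$ by observing that the intersections over $s<t$ (resp.\ $s>t$) collapse to the non-strict inequality at $t$, and then identify this with the $\phi$-semistable objects of phase $t$ via the weak stability dichotomy. Your version is slightly more explicit about the elementary real-number bookkeeping and handles $\eta^+_\phi$ and $\eta^-_\phi$ simultaneously, whereas the paper treats $\eta^+_\phi$ and remarks that $\eta^-_\phi$ is analogous, but the substance is the same.
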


\begin{proof}
We will show the statement for $\eta^+_\phi$. 
A similar argument holds for $\eta^-_\phi$.

By definition, we have that $\P^+_t=\bigcap\limits_{s<t} \T_{\geq s}  \cap \bigcap\limits_{s>t} \F_{< s}$.
It follows that $\bigcap\limits_{s<t} \T_s= \T_{\geq t}$ and $\bigcap\limits_{s>t} \F_s = \F_{\leq t}$.
Is clear that $\phi(M)=t$ for all $M \in \bigcap\limits_{s<t} \T_{\geq s}  \cap \bigcap\limits_{s>t} \F_{< s}$.
Moreover, for every subobject $L$ of $M$, $\phi(L) \leq t$ because $M \in \F_{\leq t}$ and, similarly, $\phi(M/L) \geq t$ because $M \in \T_{\geq t}$. 
Therefore $M$ is a $\phi$-semistable object of phase $t$.

In the other direction, suppose that $M$ is a $\phi$-semistable object of phase $t$.
Then we have that $\phi(L) \leq \phi(M)=t$ for every subobject $L$ of $M$ because $M$ is $\phi$-semistable, implying that $M \in \F_{\leq t}$. 
Dually, $t = \phi(M) \leq \phi(N)$ because $M$ is $\phi$-semistable, so $M \in \T_{\geq t}$.
Therefore $M \in \T_{\geq t} \cap \F_{\leq t} = \P^+_t$, as claimed.
\end{proof}

\bigskip

In the situation of the above Theorem, we define $\cP^+_\phi (= \cP^-_\phi)$ to be the \emph{slicing of} $\phi$. As a consequence of Theorem \ref{thm:HN-filt} and Theorem \ref{thm:semistabchain} we recover several important results on weak stability conditions.

\begin{corollary}\cite[Theorem 4.4]{Joyce2007}\label{cor:stabHNfilt}
Let $\phi \in \WStab \A$. 
Then every object $M\in \A$ admits a Harder-Narasimhan filtration.
That is a filtration 
$$M_0 \subset M_1 \subset \dots \subset M_n$$
such that:
\begin{enumerate}
\item $0 = M_0$ and $M_n=M$;
\item $M_k/M_{k-1}$ is $\phi$-semistable;
\item $\phi(M_1) > \phi(M_2/M_{1}) > \dots > \phi(M_n/M_{n-1})$.
\end{enumerate}
Moreover this filtration is unique up to isomorphism.
\end{corollary}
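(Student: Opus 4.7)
The statement to be proved is an immediate consequence of the two preceding results, so the plan is essentially to glue them. The strategy is: convert the weak stability condition $\phi$ into a chain of torsion classes, apply the general Harder--Narasimhan theorem for such chains, and then translate the resulting filtration back into the language of $\phi$-semistable objects.

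First I would invoke Corollary~\ref{prop:stab-chain} to produce the chain of torsion classes $\eta^+_\phi \in \CT(\A)$ (the chain $\eta^-_\phi$ would serve equally well). Since this chain is quasi-Noetherian and weakly-Artinian by that corollary, Theorem~\ref{thm:HN-filt} applies to any nonzero $M \in \A$ and yields a filtration
\[
0 = M_0 \subset M_1 \subset \dots \subset M_n = M
\]
together with a strictly decreasing sequence $r_1 > r_2 > \dots > r_n$ in $[0,1]$ such that $M_k/M_{k-1} \in \P^+_{r_k}$, and this data is unique up to isomorphism.

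Next I would use Theorem~\ref{thm:semistabchain} to reinterpret the factors. That theorem identifies $\P^+_{r_k}$ precisely with the $\phi$-semistable objects of phase $r_k$ (together with the zero object). Hence each $M_k/M_{k-1}$ is $\phi$-semistable with $\phi(M_k/M_{k-1}) = r_k$, and the strict inequalities $r_1 > \dots > r_n$ translate directly into the required inequalities $\phi(M_1) > \phi(M_2/M_1) > \dots > \phi(M_n/M_{n-1})$. Uniqueness of the Harder--Narasimhan filtration in the sense of the corollary is inherited from the uniqueness part of Theorem~\ref{thm:HN-filt}, since any such filtration for $\phi$ is, by the same translation, an HN filtration with respect to $\eta^+_\phi$.

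There is no real obstacle here: the substantive content has already been absorbed into Theorem~\ref{thm:HN-filt} (which does the combinatorial bookkeeping of producing the filtration from a chain of torsion classes) and Theorem~\ref{thm:semistabchain} (which matches the two notions of semistability). The only thing to be careful about is that the strict decrease of the $r_k$'s given by Theorem~\ref{thm:HN-filt} is exactly the strictness required for $\phi$, which is automatic because $\phi$ takes the value $r_k$ on $M_k/M_{k-1}$.
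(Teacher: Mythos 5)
Your proposal is correct and follows essentially the same route as the paper: produce a chain of torsion classes via Corollary~\ref{prop:stab-chain}, obtain the filtration from Theorem~\ref{thm:HN-filt}, and translate the factors back using Theorem~\ref{thm:semistabchain}. The only cosmetic difference is that the paper invokes both $\eta^+_\phi$ and $\eta^-_\phi$ and notes the resulting filtrations agree, whereas you work with a single chain; both are fine.
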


\begin{proof}
Let $\phi \in \WStab \A$.
Then Proposition \ref{prop:stab-chain} says that $\phi$ induces two chains of torsion classes $\eta^{+}_{\phi}, \eta^{-}_{\phi}\in \CT(\A)$.
Then Theorem \ref{thm:HN-filt} implies that for every non-zero object $M$ of $\A$ there are two filtrations 
$$M_0 \subset M_1 \subset \dots \subset M_n$$
of $M$, one associated to $\eta^{+}_{\phi}$ and the other to $\eta^{+}_{\phi}$, such that:
\begin{enumerate}
\item $0 = M_0$ and $M_n=M$;
\item $M_k/M_{k-1} \in \P_{r_k}$ for some $r_k \in [0,1]$ for all $1\leq k \leq n$;
\item $r_1 > r_2 > \dots > r_n$;
\end{enumerate}
which are unique up to isomorphism. 
It follows from Theorem \ref{thm:semistabchain} that both filtrations coincide since $\eta^{+}_{\phi}$ and $\eta^{-}_{\phi}$ induce the same slicing.
Moreover, Theorem~\ref{thm:semistabchain} also implies that $M_k/M_{k-1}$ is $\phi$-semistable for each $k$ and that $\phi(M_1) > \phi(M_2/M_{1}) > \dots > \phi(M_n/M_{n-1})$.
This finishes the proof.
\end{proof}

\begin{remark}
The quasi-Noetherian and weakly-Artinian conditions are key in Theorem \ref{thm:HN-filt} and Corollary \ref{cor:stabHNfilt} in giving the finitness of the Harder-Narasimhan filtrations in each direction. 
In the triangulated setting of Bridgeland, a stability condition (in the triangulated sense) consists of a bounded t-structure and a weakly-Artinian, quasi-Noetherian stability condition (in the abelian sense) on its heart \cite[Proposition 5.3]{Bridgeland2007}. Here, the boundedness of the t-structure also plays a key role in ensuring the finiteness of the Harder-Narasimhan filtrations. 
\end{remark}

The above result also gives us alternative description of the torsion classes induced by weak stability conditions. 

\begin{corollary} \label{cor:torsionclasses-other-description}
Let  $\phi \in \WStab \A$. Then 
\[ \T_{\geq p} := \Filt \{ M \in \A \mid \phi(M) \geq p \text{ and } M \text{ is } \phi \textrm{-semistable} \} \cup \{0\}. \]

\end{corollary}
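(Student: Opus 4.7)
The plan is to prove the two inclusions separately, with the forward inclusion being a direct unpacking of the semistability definition combined with the weak stability axiom, and the reverse inclusion following from the Harder--Narasimhan filtration of Corollary~\ref{cor:stabHNfilt}.

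For brevity write $\cS_{\geq p} = \{M \in \A \mid \phi(M)\geq p \text{ and } M \text{ is }\phi\text{-semistable}\}\cup\{0\}$. For the inclusion $\Filt \cS_{\geq p} \subseteq \T_{\geq p}$, since $\T_{\geq p}$ is a torsion class by Proposition~\ref{prop:stabtorsion} it is in particular closed under extensions, so it suffices to show $\cS_{\geq p} \subseteq \T_{\geq p}$. Let $M \in \cS_{\geq p}$ be non-zero and let $N$ be a non-zero quotient of $M$ via a short exact sequence $0 \to L \to M \to N \to 0$. The weak stability axiom gives either $\phi(L)\leq \phi(M)\leq \phi(N)$ or $\phi(L)\geq \phi(M)\geq \phi(N)$; combining with the semistability inequality $\phi(L)\leq \phi(N)$ shows that in either case $\phi(N) \geq \phi(M)\geq p$. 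Hence $M\in \T_{\geq p}$.

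For the reverse inclusion $\T_{\geq p}\subseteq \Filt \cS_{\geq p}$, let $0\neq M \in \T_{\geq p}$ and apply Corollary~\ref{cor:stabHNfilt} to obtain the Harder--Narasimhan filtration
\[ 0 = M_0 \subset M_1 \subset \dots \subset M_n = M \]
with each factor $M_k/M_{k-1}$ being $\phi$-semistable and strictly decreasing phases $\phi(M_1) > \phi(M_2/M_1) > \dots > \phi(M_n/M_{n-1})$. The final factor $M_n/M_{n-1} = M/M_{n-1}$ is a quotient of $M$, so by the definition of $\T_{\geq p}$ we have $\phi(M_n/M_{n-1})\geq p$, and then the strict decrease of phases forces $\phi(M_k/M_{k-1}) \geq p$ for every $k$. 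Thus each factor lies in $\cS_{\geq p}$, exhibiting $M$ as an object of $\Filt \cS_{\geq p}$.

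There is no genuine obstacle here; the only subtle point is the forward direction, where one must not forget that the weak stability axiom admits the \emph{opposite} ordering on $(\phi(L),\phi(M),\phi(N))$ and use the semistability inequality to rule out the bad case. The argument is then symmetric to the one used in Proposition~\ref{prop:stabtorsion} and Theorem~\ref{thm:semistabchain}.
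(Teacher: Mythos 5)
Your proof is correct and takes essentially the same route as the paper: the forward inclusion by combining the weak stability axiom with the semistability inequality on any quotient sequence, and the reverse inclusion by reading off that every factor in the Harder--Narasimhan filtration of $M \in \T_{\geq p}$ has phase at least $p$ because the last factor is a quotient of $M$. Your presentation of the forward direction as a direct argument is a minor stylistic variant of the paper's argument by contradiction.
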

\begin{proof}
($\supseteq$)  Since torsion classes are closed under extensions, it suffices to verify that each $\phi$-semistable object $M$ with $\phi(M) \geq p$ is in $\T_{\geq p}$. To this end, let $M$ be such an object and let $0 \to L \to M \to N \to 0$ be a short exact sequence in $\A$. If $\phi(N) < p$ we must have that $\phi(L) \geq \phi(M) > \phi(N)$ as $\phi$ is a weak stability condition. However, this is a contradiction to the fact that $M$ is $\phi$-semistable. 

($\subseteq$) Let $0\neq M \in \T_{\geq p}$. By Corollary~\ref{cor:stabHNfilt} there exists a unique sequence $r_1 > r_2 > \dots > r_n$ in $[0,1]$ and a filtration 
$0 = M_0 \subset M_1 \subset \dots \subset M_n= M$ of $M$ such that $M_k / M_{k-1}$ is $\phi$-semistable of phase $r_k$.
As $M \in \T_{\geq p}$, $r_n = \sup\{ i \in [0,1] \mid M \in \mathcal{T}_i \}  \geq p$. Thus $M$ admits a filtration by $\phi$-semistable objects with phase at least $p$. 
\end{proof}

\section{From chains of torsion classes to weak stability conditions} \label{sec:tors-to-wsc}

In the previous section we have shown how one can go from weak stability conditions to chains of torsion classes. 
In this section we show a converse construction and characterise when this construction gives honest stability conditions. 
Moreover we see in  that there are natural equivalence relations $\sim$ on $\CT(\A)$ and $\WStab \A$ induced by the slicings such that our constructions induce  bijections between $\CT(\A)/\sim$ and $\WStab \A /\sim$.

\begin{definition} \label{def:ohm-mho}
Let $\eta = (\mathcal{T}_i)_{i \in [0,1]} $ be a chain of torsion classes in $\A$.
We introduce the maps
\begin{align*}
    \mho_\eta = \mho : \Obj^\ast \A & \longrightarrow [0,1] \\ M & \longmapsto \sup \{ i \in [0,1] \mid M \in \mathcal{T}_i \}
\end{align*}
and 
\begin{align*}
    \Omega_\eta = \Omega : \Obj^\ast \A & \longrightarrow [0,1] \\ M & \longmapsto  \inf\{ i \in [0,1] \mid M \in \mathcal{F}_i \}
\end{align*}
where $\mathcal{F}_i = \mathcal{T}_i^{\perp}.$
\end{definition}

\begin{remark} 
Since the interval $[0,1]$ is complete,  the equalities $\Omega (M) = \sup\{ i \in [0,1] \mid M \not\in \mathcal{F}_i \}$    and $\mho (M) = \inf \{ i \in [0,1] \mid M \not\in \mathcal{T}_i \}$ hold.
\end{remark}

\begin{remark}
If $\eta$ is quasi-Noetherian and weakly-Artinian, then $\Omega_{\eta}$ (respectively, $\mho_{\eta}$) returns $r_1$ (respectively $r_n$) from the Harder-Narasimhan filtration of $M$ (Theorem~\ref{thm:HN-filt}).
We note that the definitions of $\Omega$ and $\mho$ also make sense for chains of torsion classes that do not satisfy these extra conditions.
\end{remark}

We make some observations.

\begin{lemma}\label{lemma: omega properties} Let $\eta$ be a chain of torsion classes in $\A$ and let $0 \to L \to M \to N \to 0$ be a short exact sequence in $\mathcal{A}$. Then
\begin{enumerate}
    \item $\mho (M) \leq \mho(N)$.
    \item $\mho (M) \geq \min \{ \mho (L), \mho (N) \}$.
\end{enumerate}
Dually,
\begin{enumerate}
    \item[1'.] $\Omega (M) \geq \Omega (L)$.
    \item[2'.] $\Omega (M) \leq \max\{ \Omega (L), \Omega (N) \}$.
\end{enumerate}
\end{lemma}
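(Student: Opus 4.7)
The plan is to prove each claim by chasing the short exact sequence through the defining properties of torsion classes, torsion-free classes, and the monotonicity of the chain. Recall the key properties I will use: $\T_i$ is closed under quotients and extensions, $\F_i$ is closed under subobjects and extensions, and if $r \leq s$ then $\T_s \subseteq \T_r$ and hence $\F_r \subseteq \F_s$.

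For (1), I would observe that whenever $M \in \T_i$, the quotient $N$ also lies in $\T_i$ since torsion classes are closed under quotients. Hence $\{i \in [0,1] \mid M \in \T_i\} \subseteq \{i \in [0,1] \mid N \in \T_i\}$, and taking suprema yields $\mho(M) \leq \mho(N)$. The argument for (1') is the exact dual: $M \in \F_i$ forces the subobject $L \in \F_i$, so the set over which the infimum defining $\Omega(L)$ is taken contains the set for $\Omega(M)$, giving $\Omega(M) \geq \Omega(L)$.

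For (2), set $m = \min\{\mho(L), \mho(N)\}$ and pick any $i < m$. Because $i < \mho(L) = \sup\{j \mid L \in \T_j\}$, there exists $j$ with $i < j$ and $L \in \T_j$; by monotonicity $\T_j \subseteq \T_i$, so $L \in \T_i$. The same reasoning gives $N \in \T_i$. Since $\T_i$ is closed under extensions and we have the short exact sequence $0 \to L \to M \to N \to 0$, we conclude $M \in \T_i$, so $\mho(M) \geq i$. Letting $i \to m^{-}$ gives $\mho(M) \geq m$. The proof of (2') is dual, replacing suprema by infima and using that for any $i > \max\{\Omega(L), \Omega(N)\}$ we may find $j < i$ with $L, N \in \F_j \subseteq \F_i$, whence $M \in \F_i$ by closure of $\F_i$ under extensions, so $\Omega(M) \leq i$.

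There is no real obstacle here; the only mild care is needed at the boundary, but this is absorbed by the conventions $\T_0 = \A$ and $\T_1 = \{0\}$, which make the extremal cases $m = 0$ and $m = 1$ trivial. The argument does not require $\eta$ to be weakly-Artinian or quasi-Noetherian, as the author already indicated in the remark preceding the lemma.
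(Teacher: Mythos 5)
Your proof is correct and takes essentially the same approach as the paper: the paper simply notes that (1) follows from closure of torsion classes under quotients and (2) from closure under extensions, and you have filled in exactly those details (with the correct handling of the supremum via the order-reversing property of the chain).
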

\begin{proof} We only prove the $\mho$ statements, the others follow by dual arguments. 
The first assertion follows from the fact that torsion classes are closed under quotients. The second follows since torsion classes are closed under extensions. 
\end{proof}

\begin{theorem} \label{thm:ohm-mho-are-wsc} Let $\eta$ be a chain of torsion classes on $\A$. 
$\Omega : \Obj^\ast \A \to [0,1]$ and $\mho : \Obj^\ast \A \to [0,1]$ are both weak stability conditions on $\mathcal{A}$.
\end{theorem}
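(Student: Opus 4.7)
The plan is to reduce the defining dichotomy of a weak stability condition directly to the four inequalities already packaged in Lemma~\ref{lemma: omega properties}. I would treat $\mho$ in full detail and then observe that $\Omega$ is handled by formally dual arguments (using items 1' and 2' of that lemma), so no new ideas are needed for the second half of the statement.

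Fix a short exact sequence $0 \to L \to M \to N \to 0$ of non-zero objects and focus on $\mho$. The lemma supplies the unconditional inequality $\mho(M) \leq \mho(N)$, so the task reduces to locating $\mho(L)$ relative to $\mho(M)$ in a manner compatible with the required dichotomy. I would split on whether $\mho(L) \leq \mho(M)$. In that first case, concatenating with $\mho(M) \leq \mho(N)$ gives the ascending chain $\mho(L) \leq \mho(M) \leq \mho(N)$, and we are in the first alternative of the weak stability condition axiom.

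In the remaining case, $\mho(L) > \mho(M)$, the second inequality of the lemma, $\mho(M) \geq \min\{\mho(L), \mho(N)\}$, forces the minimum on the right to equal $\mho(N)$, so $\mho(M) \geq \mho(N)$. Combined with the unconditional $\mho(M) \leq \mho(N)$ this produces the equality $\mho(M) = \mho(N)$, and therefore $\mho(L) \geq \mho(M) \geq \mho(N)$, which is the descending alternative.

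The same template applied to $\Omega$—starting from the unconditional lower bound $\Omega(L) \leq \Omega(M)$, branching on whether $\Omega(M) \leq \Omega(N)$, and invoking $\Omega(M) \leq \max\{\Omega(L), \Omega(N)\}$ in the complementary case to force $\Omega(L) = \Omega(M)$—concludes the argument. I do not expect a genuine obstacle here: all the categorical content, i.e.\ the interaction of closure of torsion classes under quotients/extensions (resp.\ torsionfree classes under subobjects/extensions) with the suprema/infima defining $\mho$ and $\Omega$, has already been isolated in Lemma~\ref{lemma: omega properties}. What remains is the short bookkeeping above, whose only subtlety is noticing that the ``strict inequality'' case in each branch is automatically collapsed to an equality in one of the three terms by the companion bound.
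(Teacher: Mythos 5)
Your proof is correct and essentially mirrors the paper's: both arguments reduce everything to the four inequalities of Lemma~\ref{lemma: omega properties} and finish by a short two-case analysis, the only cosmetic difference being that you branch on $\mho(L)$ versus $\mho(M)$ whereas the paper branches on $\mho(L)$ versus $\mho(N)$ (and each branch forces one of the three quantities to collapse to equality). The dual treatment of $\Omega$ via items 1' and 2' is also the route the paper takes.
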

\begin{proof}
Let $0 \to L \to M \to N \to 0$ be a short exact sequence in $\mathcal{A}$. First, suppose that $\mho (L) \leq \mho (N)$ then, by Lemma \ref{lemma: omega properties}, $\mho (M) \geq \min \{ \mho (L), \mho (N) \}= \mho (L)$ and  $\mho (M) \leq \mho(N)$. Together, we have that $\mho (L) \leq \mho (M) \leq \mho (N)$. 

Now suppose that $\mho (N) \leq \mho (L)$. Then, by Lemma \ref{lemma: omega properties}, $\mho (M) \geq \min \{ \mho (L), \mho (N) \}= \mho (N)$ and $\mho (M) \leq \mho (N))$. Thus, $\mho (M) = \mho (N)$  and we have that $\mho (L) \geq  \mho (M) = \mho (N)$. 

The fact that $\Omega$ is a weak stability condition follows from a dual argument.
\end{proof}

\begin{remark}
For a chain of torsion classes $\eta$ that is weakly-Artinian and quasi-Noetherian, the weak stability conditions $\Omega_\eta$ and $\mho_\eta$ are not necessarilty weakly-Artinian nor quasi-Noetherian. For example, let $\A = \Mod \Lambda$ and let $\eta$ be the chain  $0 \subset \A$. 
Thus the notion of weakly-Artinian and quasi-Noetherian chains of torsion classes is more general than the correspondoning notion for weak stability conditions.
This can actually be seen from the definitions.
Indeed weakly-Artinianity and quasi-Noetherianity for weak stability conditions asks for a certain property to be satisfied in every chain of subobjects, whilst weakly-Artinianity and quasi-Noetherianity for chains of torsion classes the property needs to be satisfied in only one chain of subobjects, see Remark~\ref{rmk:WA&QN}.1. 
\end{remark}

It has been shown in Theorem~\ref{thm:semistabchain} that every weak stability condition induces chains of torsion classes.
In particular, every stability condition induces chains of torsion classes, see also \cite{T-HN-filt}.
However, in practice, $\Omega$ and $\mho$ are rarely stability conditions.

\begin{proposition} \label{prop:when-ohm-mho-stab}
Let $\eta$ be a chain of torsion classes on $\A$. 
Then the following are equivalent 
\begin{enumerate}
    \item There exists a unique $t \in [0,1]$ such that $\cP^\eta_t \neq \{0\}$ (and hence $\cP^\eta_t = \A$).
    \item $\mho$ is constant.
    \item[2'.] $\Omega$ is constant.
    \item $\mho$ is a stability condition on $\mathcal{A}$.
    \item[3'.] $\Omega$ is a stability condition on $\mathcal{A}$.
    \item $\mho = \Omega$.
\end{enumerate}
\end{proposition}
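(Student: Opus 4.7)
The plan is to establish the chain $(1) \Rightarrow (2) \Rightarrow (3) \Rightarrow (2) \Rightarrow (1)$ together with $(1) \Leftrightarrow (4)$; the equivalences involving the primed statements will then follow by entirely symmetric arguments using the dual clauses of Lemma \ref{lemma: omega properties}.

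I would first dispatch several easy directions at once. If $\cP^\eta_t = \A$, unpacking the definition forces $\T_s = \A$ for $s < t$ and $\T_s = \{0\}$ for $s > t$, which immediately yields $\mho(M) = \Omega(M) = t$ for every non-zero $M$; this handles $(1) \Rightarrow (2)$, $(1) \Rightarrow (2')$, and $(1) \Rightarrow (4)$ in one go. For the converse $(2) \Rightarrow (1)$, if $\mho$ is constantly equal to $t$ then no non-zero object lies in $\T_s$ for $s > t$, and since the set $\{s : M \in \T_s\}$ is a down-set in $[0,1]$ with supremum $t$, every non-zero object lies in $\T_s$ for every $s < t$. One then reads off $\T_s = \{0\}$ or $\T_s = \A$ accordingly, giving $\cP^\eta_t = \A$. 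Finally, $(2) \Rightarrow (3)$ is immediate, since a constant function trivially satisfies the see-saw condition.

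The key non-trivial step is $(3) \Rightarrow (2)$. My approach would be to pick any two non-zero $L, N$ and apply Lemma \ref{lemma: omega properties} to the split short exact sequences $0 \to L \to L \oplus N \to N \to 0$ and $0 \to N \to L \oplus N \to L \to 0$. Combining the resulting inequalities forces $\mho(L \oplus N) = \min\{\mho(L), \mho(N)\}$, and then the see-saw condition applied to the first sequence forces $\mho(L) = \mho(N)$, so $\mho$ is constant. The analogous argument with $\Omega$ gives $(3') \Rightarrow (2')$.

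Finally, for $(4) \Rightarrow (1)$, I would first establish that $\mho(M) = \Omega(M) = t$ is equivalent to $M \in \cP^\eta_t$: the $\sup$/$\inf$ definitions combined with the monotonicity of $\eta$ show that $\mho(M) = t$ forces $M \in \T_s$ for all $s < t$, while $\Omega(M) = t$ forces $M \in \F_s$ for all $s > t$; the converse is immediate. Hence $(4)$ says that every non-zero object is $\eta$-quasi-semistable. To upgrade this to uniqueness of the phase I would argue by contradiction: if non-zero $M \in \cP^\eta_t$ and $N \in \cP^\eta_{t'}$ existed with $t < t'$, then using that both torsion and torsionfree classes are closed under direct summands, a direct calculation shows $\mho(M \oplus N) = t$ while $\Omega(M \oplus N) = t'$, contradicting $(4)$. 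The main obstacle will be the bookkeeping in this last step; the witness of the contradiction is the non-empty interval $(t,t')$, on which $M \oplus N$ lies in neither $\T_s$ nor $\F_s$.
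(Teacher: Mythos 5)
Your proof is correct and follows essentially the same approach as the paper's: the crux in both is the split short exact sequence $0 \to L \to L \oplus N \to N \to 0$, which forces $\mho(L\oplus N) = \min\{\mho(L),\mho(N)\}$ (resp. $\Omega(L\oplus N) = \max$) via Lemma~\ref{lemma: omega properties}, thereby breaking the see-saw condition (for $3\Rightarrow 2$) and separating $\mho$ from $\Omega$ (for $4 \Rightarrow 1$). One small improvement in your version of $3\Rightarrow 2$: the paper unnecessarily restricts to \emph{semistable} $A, B$, whereas your computation via the Lemma works for arbitrary non-zero objects, which is both cleaner and logically tighter. Your route through $4 \Rightarrow 1$ (computing $\mho$ and $\Omega$ of $M \oplus N$ directly) instead of the paper's $4 \Rightarrow 2$ (observing HN filtrations have length one, then appealing to non-semistability of a direct sum) is a cosmetic reorganization of the same idea; both are valid.
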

\begin{proof}
Clearly 4 $\Leftarrow$ 1 $\Leftrightarrow$ 2 $\Rightarrow $ 3. To show 3 $\Rightarrow$ 2, suppose that $\mho$ is not constant and let $A, B \in \Obj^{\ast}\mathcal{A}$ be semistable such that $\mho (A) \not = \mho(B) $. Without loss of generality, we may suppose that  $\mho  (A)  > \mho (B) $.
Then $\mho ( A \oplus B) = \mho (B)$ and  the split short exact sequence $0 \to A \to A \oplus B \to B \to 0$ in $\mathcal{A}$ does not satisfy the see-saw condition, thus $\mho$ is not a stability condition. 

Finally, to see 4 $\Rightarrow$ 2, observe that if $\Omega = \mho$ then all Harder-Narasimhan filtrations are of length one, that is, all objects are semistable. If $\mho$ is not constant, say $\mho(A) \not = \mho (B)$, then $A \oplus B$ is not semistable, contradicting the above. The remaining statements for $\Omega$ are shown in a similar way.
\end{proof}

\subsection{Order structures in chains of torsion classes} \label{sec: order structures}

We now investigate how the poset structure of $(\Tors \A, \subseteq)$ induce partial orders on both $\fT (\A)$ and $\WStab(\A)$. 
\begin{definition}
Let $\eta = (\T_i)_{i \in [0,1]}, \eta ' = (\T_i')_{i \in [0,1]} \in \fT (\A)$ then 
\begin{align*}
     \eta \leq \eta' &: \Leftrightarrow \T_i \subseteq \T_i' \quad  \forall i \in [0,1].
     \intertext{Let $\phi, \phi ' \in \WStab(\A)$ then }
     \phi \leq \phi' &: \Leftrightarrow \phi (M)   \leq  \phi '(M) \quad \forall M \in \A.
\end{align*}
\end{definition}

\begin{proposition}
With the notation above $(\CT(\A), \leq)$ and $(\WStab \A, \leq)$ posets.
\end{proposition}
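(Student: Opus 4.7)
The proof is a routine verification that both orderings inherit the partial order axioms from a pointwise construction, so my plan is to simply unpack the definitions and observe that reflexivity, antisymmetry, and transitivity each follow coordinate-by-coordinate.

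For $(\fT(\A), \leq)$, the plan is to note that $(\Tors \A, \subseteq)$ is already a poset, since subset inclusion of subcategories is a partial order. Reflexivity of $\leq$ on $\fT(\A)$ is then immediate: for any $\eta = (\T_i)_{i \in [0,1]}$ we have $\T_i \subseteq \T_i$ for every $i$, so $\eta \leq \eta$. For antisymmetry, if $\eta \leq \eta'$ and $\eta' \leq \eta$, then $\T_i \subseteq \T_i'$ and $\T_i' \subseteq \T_i$ for every $i \in [0,1]$, hence $\T_i = \T_i'$ for every $i$, giving $\eta = \eta'$. Transitivity is similar: if $\eta \leq \eta'$ and $\eta' \leq \eta''$ then for each $i$ we chain the inclusions $\T_i \subseteq \T_i' \subseteq \T_i''$.

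For $(\WStab(\A), \leq)$, the plan is identical, using instead that $([0,1], \leq)$ is a (total) order. Reflexivity, antisymmetry and transitivity of $\leq$ on $\WStab \A$ follow by evaluating at each $M \in \Obj^\ast(\A)$ and applying the corresponding axiom for $\leq$ on $[0,1]$; antisymmetry in particular gives that $\phi(M) = \phi'(M)$ for every $M$, hence $\phi = \phi'$ as functions.

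There is no real obstacle here: the only substantive point is that partial order axioms are preserved under pointwise lifting to functions into a poset, which is standard. The whole proof should fit in a few lines.
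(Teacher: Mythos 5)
Your proof is correct; it is the standard pointwise lifting of poset axioms, and the paper in fact gives no proof at all for this proposition, treating it as immediate from the definitions exactly as you do.
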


\begin{remark} \label{rem:mappings order preserving}
We make some observations
\begin{enumerate}
    \item $\mho , \Omega: \fT(\A) \rightrightarrows \WStab(\A)$ and $\eta^+,\eta^- :  \WStab(\A) \rightrightarrows \fT(\A) $ are all order preserving mappings. 
    \item For all $\eta \in \fT(\A)$, $\mho_\eta \leq \Omega_\eta$.
    \item For all $\phi \in \WStab(\A)$, $\eta^-_\phi \leq \eta^+_\phi$.
    \item These natural orders may also be defined for chains of torsion classes (resp. weak stability conditions) that are not necessarily weakly-Artinian or quasi-Noetherian. 
\end{enumerate}
\end{remark}

\begin{proposition} \label{prop:comparision of chains}
Let $\sigma = (\mathcal{S}_i)_{i\in[0,1]} \in \fT(\mathcal{A}).$ Then 
\[ \eta^- \mho_\sigma \leq  \eta^- \Omega_\sigma \leq \sigma \leq \eta^+ \mho_\sigma \leq \eta^+ \Omega_\sigma. \] 
\end{proposition}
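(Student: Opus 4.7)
The plan is to split the chain of five inequalities into the two outer ones, which follow formally from Remark~\ref{rem:mappings order preserving}, and the two middle ones, which require a direct argument from the definitions of $\Omega_\sigma$, $\mho_\sigma$, $\eta^+$, and $\eta^-$.

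First I would handle the outer inequalities. By Remark~\ref{rem:mappings order preserving}(2), $\mho_\sigma \leq \Omega_\sigma$ in $\WStab(\A)$, so applying the order-preserving maps $\eta^-$ and $\eta^+$ (Remark~\ref{rem:mappings order preserving}(1)) yields $\eta^- \mho_\sigma \leq \eta^- \Omega_\sigma$ and $\eta^+ \mho_\sigma \leq \eta^+ \Omega_\sigma$. This is essentially free and disposes of the first and last inequalities.

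The main content is the middle pair $\eta^- \Omega_\sigma \leq \sigma \leq \eta^+ \mho_\sigma$, which I would prove separately by comparing torsion classes index-by-index in $[0,1]$. The boundary cases $i=0$ and $i=1$ hold trivially since every chain in $\fT(\A)$ takes the values $\A$ and $\{0\}$ at the endpoints. So fix $i \in (0,1)$ and write $(\eta^+ \mho_\sigma)_i = \mathcal{T}^{\mho_\sigma}_{\geq i}$ and $(\eta^- \Omega_\sigma)_i = \mathcal{T}^{\Omega_\sigma}_{> i}$ as in Corollary~\ref{prop:stab-chain}. For $\sigma \leq \eta^+ \mho_\sigma$, let $M \in \mathcal{S}_i$ and let $N$ be any quotient of $M$. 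Since $\mathcal{S}_i$ is a torsion class, it is closed under quotients, so $N \in \mathcal{S}_i$ and hence $\mho_\sigma(N) = \sup\{j : N \in \mathcal{S}_j\} \geq i$, exactly saying $M \in \mathcal{T}^{\mho_\sigma}_{\geq i}$.

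For $\eta^- \Omega_\sigma \leq \sigma$, I would argue by contradiction. Suppose $M \in \mathcal{T}^{\Omega_\sigma}_{>i}$ but $M \notin \mathcal{S}_i$. Consider the canonical short exact sequence $0 \to tM \to M \to fM \to 0$ with respect to the torsion pair $(\mathcal{S}_i,\mathcal{F}_i)$. The hypothesis $M \notin \mathcal{S}_i$ forces $fM \neq 0$, so $fM$ is a nonzero quotient of $M$ lying in $\mathcal{F}_i$. But then $\Omega_\sigma(fM) = \inf\{j : fM \in \mathcal{F}_j\} \leq i$, contradicting the definition of $\mathcal{T}^{\Omega_\sigma}_{>i}$ which requires $\Omega_\sigma(N) > i$ for every quotient $N$ of $M$. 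Hence $M \in \mathcal{S}_i$, as needed.

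I do not anticipate a real obstacle: both middle inequalities amount to a one-line exploitation of the defining closure properties of torsion pairs (closure of $\mathcal{S}_i$ under quotients on one side, and the existence of the canonical quotient into $\mathcal{F}_i$ on the other). The only thing to take a little care over is the treatment of the suprema/infima at the boundary and the fact that $\eta^+$ uses $\geq$ while $\eta^-$ uses strict $>$, which is precisely what makes the asymmetric placement $\eta^-\Omega_\sigma \leq \sigma \leq \eta^+\mho_\sigma$ (rather than the reverse) the right one.
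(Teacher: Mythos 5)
Your proposal is correct and takes essentially the same approach as the paper: verify the middle pair of inclusions index-by-index using closure of $\mathcal{S}_i$ under quotients for $\sigma \leq \eta^+\mho_\sigma$ and the canonical torsion-free quotient for $\eta^-\Omega_\sigma \leq \sigma$, then dispose of the outer inequalities via Remark~\ref{rem:mappings order preserving}. The paper phrases the middle inclusions by first rewriting $(\eta^+\mho_\sigma)_p$ and $((\eta^-\Omega_\sigma)_p)^\perp$ as explicit sets using Lemma~\ref{lemma: omega properties}, but this is the same underlying argument.
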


\begin{proof}
By Proposition \ref{prop:stabtorsion} and Corollary \ref{prop:stab-chain}, $\eta^+ \mho_\sigma = (\T_{p})_{p\in[0,1]}$ where 
\[ \T_p = \{ M \in \A \mid \mho(N) \geq p \text{ for all quotients $N$ of $M$} \} \cup \{ 0 \}. \] By Lemma \ref{lemma: omega properties}, $\mho(N) \geq \mho(M)$ for all quotients $N$ of $M$, thus 
\begin{align*}
    \T_p &= \{ M \in \A \mid \mho(M) \geq p\} \cup \{ 0 \} 
    \\ & = \{ M \in \A \mid \sup \{ i \in [0,1] \mid M \in \mathcal{S}_i \} \geq p\} \cup \{ 0 \}.
\end{align*}  
The inclusion $\mathcal{S}_p \subseteq \T_p$ follows.
Similarly, we have that $\eta^- \Omega_\sigma = (T'_p)_{p\in [0,1]}$ where \[  (\T'_p)^{\perp} = \{ M \in \A \mid \inf\{ i \in [0,1] \mid M \in S_i^\perp \} \leq p\} \cup \{ 0 \}\] and the inclusion $\mathcal{S}_p^{\perp} \subseteq \T'_p $ follows.  The remaining inequalities follow from Remark~\ref{rem:mappings order preserving}. 
\end{proof}

\begin{proposition}
Let $\phi$ be a weak stability condition on $\A$. Then 
\[
    \mho \eta^-_\phi  \leq   \mho \eta^+_\phi  \leq \phi \leq \Omega \eta^-_\phi \leq \Omega \eta^+_\phi. 
\]
\end{proposition}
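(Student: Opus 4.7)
The outer two inequalities $\mho\eta^-_\phi \leq \mho\eta^+_\phi$ and $\Omega\eta^-_\phi \leq \Omega\eta^+_\phi$ come for free: by definition $\mathcal{T}_{>s} \subseteq \mathcal{T}_{\geq s}$ for every $s \in (0,1)$, hence $\eta^-_\phi \leq \eta^+_\phi$ in $\fT(\A)$, and applying the order-preserving maps $\mho, \Omega$ (Remark \ref{rem:mappings order preserving}) preserves this comparison. So the real content of the statement is the pair of central inequalities $\mho\eta^+_\phi \leq \phi \leq \Omega\eta^-_\phi$, which I would prove directly from the definitions.

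For $\mho\eta^+_\phi(M) \leq \phi(M)$, I would unfold: by Definition \ref{def:ohm-mho}, $\mho\eta^+_\phi(M) = \sup\{i \in [0,1] \mid M \in \mathcal{T}^+_i\}$, where $\mathcal{T}^+_s = \mathcal{T}_{\geq s}$ for $s \in (0,1)$. The crucial observation is that $M$ is a quotient of itself, so from the description $\mathcal{T}_{\geq s} = \{N \mid \phi(K) \geq s \text{ for all quotients } K \text{ of } N\} \cup \{0\}$ in Proposition \ref{prop:stabtorsion}, membership $M \in \mathcal{T}_{\geq s}$ already forces $\phi(M) \geq s$. Thus $\{i \in (0,1) \mid M \in \mathcal{T}^+_i\} \subseteq \{i \in (0,1) \mid \phi(M) \geq i\}$, and taking suprema (noting $\phi(M) \geq 0$ handles the boundary index $i=0$) gives $\mho\eta^+_\phi(M) \leq \phi(M)$.

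The inequality $\phi(M) \leq \Omega\eta^-_\phi(M)$ is entirely dual. Here $\Omega\eta^-_\phi(M) = \inf\{i \in [0,1] \mid M \in \mathcal{F}^-_i\}$, and the torsionfree class paired with $\mathcal{T}_{>s}$ is $\mathcal{F}_{\leq s}$. Since $M$ is a subobject of itself, the description $\mathcal{F}_{\leq p} = \{N \mid \phi(L) \leq p \text{ for all subobjects } L \text{ of } N\} \cup \{0\}$ forces $\phi(M) \leq i$ whenever $M \in \mathcal{F}_{\leq i}$; taking the infimum (with $\phi(M) \leq 1$ handling $i = 1$) yields the bound.

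There isn't really a hard step: the argument is essentially a tautology packaged into the definitions of $\mathcal{T}_{\geq p}$ and $\mathcal{F}_{\leq p}$. The only mild care required is at the boundary indices $0$ and $1$, where the chain is defined separately (by $\mathcal{T}_0 = \A$ and $\mathcal{T}_1 = \{0\}$) rather than through the formulas involving $\phi$; but since $\phi$ takes values in $[0,1]$, these endpoints cause no trouble.
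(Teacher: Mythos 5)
Your proof is correct and takes essentially the same route as the paper: the outer inequalities are dispatched by monotonicity (Remark~\ref{rem:mappings order preserving}), and the central inequalities $\mho\eta^+_\phi \leq \phi \leq \Omega\eta^-_\phi$ follow by observing that $M$ is a quotient (resp.\ subobject) of itself, so $\phi(M)$ bounds the relevant index set from above (resp.\ below). Your slightly more explicit bookkeeping at the endpoints $i=0,1$ is a small, harmless refinement over the paper's presentation.
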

\begin{proof}
The first and last inequalities follow from Remark~\ref{rem:mappings order preserving}. It remains to show the others.
 Let $M$ be a non-zero object in $\A$, then 
$\mho \eta^+_\phi (M) = \sup I$ where  \[ I =\{i \in [0,1] \mid \phi (N) \geq i \text{ for all quotients $N$ of $M$} \}.
\]
$\phi (M)$ is clearly an upper bound of $I$, thus $\sup I \leq \phi (M)$. On the other hand, we have that  $\Omega \eta^-_\phi(M) = \inf J$ where 
\[ J  = \{ i \in [0,1] \mid \phi(L)  <i \text{ for all submodules $N$ of $M$} \}. \]
As $\phi(M)$ is a lower bound of $J$, we have $\Omega \eta^+_\phi(M) \geq \phi(M)$. 
\end{proof}

We now show that the slicings induced by the chains of torsion classes in Proposition~\ref{prop:comparision of chains} coincide. 

\begin{theorem} \label{prop:sameslicing} Let $\sigma = (\mathcal{S}_i)_{i\in[0,1]} \in \fT(\mathcal{A})$. Then the chains of torsion classes, $\eta^- \mho_\sigma$,  $\eta^- \Omega_\sigma,$ $\sigma$, $\eta^+ \mho_\sigma$, $\eta^+ \Omega_\sigma$ all have the same slicing. 
\end{theorem}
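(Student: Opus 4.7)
The plan is to reduce, via Theorem~\ref{thm:semistabchain}, to the equality of the $\sigma$-slicing with the $\mho_\sigma$- and $\Omega_\sigma$-slicings, where by a ``$\phi$-slicing'' I mean the slicing whose phase-$t$ component consists of the $\phi$-semistable objects $M$ with $\phi(M)=t$. Indeed, Theorem~\ref{thm:semistabchain} already identifies the slicings of $\eta^+ \mho_\sigma$ and $\eta^- \mho_\sigma$ with the $\mho_\sigma$-slicing, and likewise $\eta^\pm \Omega_\sigma$ with the $\Omega_\sigma$-slicing. I will therefore prove $\mathcal{P}^\sigma_t = \mathcal{P}^{\mho_\sigma}_t$ for all $t \in [0,1]$; the analogous identity for $\Omega_\sigma$ follows by a strictly dual argument (swap subobjects with quotients and $\T_s$ with $\F_s$ throughout).

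Two preliminary observations will be used repeatedly. First, since $\sigma$ is order-reversing, $\T_i \subseteq \T_s$ whenever $i > s$, so any non-zero $X \in \F_s$ satisfies $\mho_\sigma(X) \leq s$ (otherwise $X$ would lie in $\T_s \cap \F_s = 0$). Second, if $M$ is $\mho_\sigma$-semistable and $L \subseteq M$ is a non-trivial subobject, then $\mho_\sigma(L) \leq \mho_\sigma(M)$: combining $\mho_\sigma(L) \leq \mho_\sigma(M/L)$ from semistability with Lemma~\ref{lemma: omega properties}.2 applied to $0 \to L \to M \to M/L \to 0$ yields $\mho_\sigma(M) \geq \min\{\mho_\sigma(L), \mho_\sigma(M/L)\} = \mho_\sigma(L)$.

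For the inclusion $\mathcal{P}^\sigma_t \subseteq \mathcal{P}^{\mho_\sigma}_t$, take $0 \neq M \in \bigcap_{s<t}\T_s \cap \bigcap_{s>t}\F_s$. Combining $M \in \T_s$ for all $s < t$ with the first observation above applied to any $s > t$ gives $\mho_\sigma(M) = t$. Given a subobject $L$ of $M$, closure of $\F_s$ under subobjects puts $L \in \F_s$ for all $s > t$, so $\mho_\sigma(L) \leq t = \mho_\sigma(M) \leq \mho_\sigma(M/L)$ by Lemma~\ref{lemma: omega properties}.1, proving $\mho_\sigma$-semistability of $M$. For the reverse inclusion, take $M$ that is $\mho_\sigma$-semistable with $\mho_\sigma(M) = t$. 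For $s > t$, consider the canonical sequence $0 \to t_s M \to M \to f_s M \to 0$; were $t_s M \neq 0$ then $\mho_\sigma(t_s M) \geq s > t$, contradicting $\mho_\sigma(t_s M) \leq \mho_\sigma(M) = t$ from the second observation, so $M \in \F_s$. For $s < t$, if $f_s M \neq 0$ then the first observation forces $\mho_\sigma(f_s M) \leq s < t$, but Lemma~\ref{lemma: omega properties}.1 gives $\mho_\sigma(f_s M) \geq \mho_\sigma(M) = t$, a contradiction; hence $M \in \T_s$. Together $M \in \mathcal{P}^\sigma_t$.

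The step I expect to be the main obstacle is the second preliminary observation: bridging from the semistability hypothesis, which constrains $\mho_\sigma(L)$ only relative to $\mho_\sigma(M/L)$, to a direct bound of $\mho_\sigma(L)$ by $\mho_\sigma(M)$, since every subsequent contradiction hinges on comparing torsion-subobject values to $\mho_\sigma(M)$ itself. Once that lemma is in place, the boundary cases $t \in \{0, 1\}$ slot into the same template by dropping whichever of the two intersections defining $\mathcal{P}^\sigma_t$ becomes vacuous, and the dual argument for $\Omega_\sigma$ proceeds by replacing each subobject-based step with its quotient-based counterpart.
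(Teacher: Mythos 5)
Your proof is correct and takes a genuinely different route from the paper's. The paper proves directly that $\eta^-\mho_\sigma$ and $\sigma$ have the same slicing by comparing torsion classes: it reduces to two set-theoretic identities $\bigcap_{s<r}\mathcal{S}_s = \bigcap_{s<r}\T_s$ and $\bigcap_{s>r}\mathcal{S}_s^\perp = \bigcap_{s>r}\F_s$, proving the latter via Lemma~\ref{lem:union-intersection-torsion} (which is where quasi-Noetherianity of $\sigma$ enters). You instead pass everything through the object-level characterisation of semistability: having invoked Theorem~\ref{thm:semistabchain} to identify $\cP^{\eta^\pm\mho_\sigma}$ with the $\mho_\sigma$-semistable objects, you show $\cP^\sigma_t$ equals that same collection using only Lemma~\ref{lemma: omega properties} and the closure properties of torsion/torsionfree classes. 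Your observation~2 ($\mho_\sigma(L)\leq\mho_\sigma(M)$ for a subobject $L$ of a semistable $M$) is the precise analogue of what drives the $(\supseteq)$ direction of (B) in the paper, but you avoid the union-of-torsion-classes apparatus, so your argument never explicitly uses the quasi-Noetherian hypothesis on $\sigma$ — that is quietly absorbed into the assumption that Theorem~\ref{thm:semistabchain} applies to $\mho_\sigma$, which requires $\eta^\pm_{\mho_\sigma}$ to lie in $\CT(\A)$ (and the paper notes after Theorem~\ref{thm:ohm-mho-are-wsc} that $\mho_\sigma$ is not automatically in $\WStab\A$). This is a standing convention the theorem statement itself relies on, so it is not a defect particular to your proof, but it is the one place your argument implicitly leans on hypotheses the paper's more computational proof does not formally invoke. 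One cosmetic remark: in the reverse inclusion for $s>t$, you should also rule out $t_sM = M$ so that observation~2 applies to a proper subobject; this is immediate since $t_sM = M$ would give $\mho_\sigma(M)\geq s>t$, contradicting $\mho_\sigma(M)=t$.
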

\begin{proof}
We show that $\eta^- \mho_\sigma$ has the same slicing as $\sigma$, whence the proof for the other chains of torsion classes will follow by similar arguments. 
It follows from the definitions that $\eta^- \mho_\sigma = (\mathcal{T}_p)_{p\in [0,1]}$ where 
\[ \mathcal{T}_p = \{ M \in \A \mid \sup\{ i \mid M \in \mathcal{S}_i\} >p \} \cup \{0\}, \] and has a torsionfree classes given by 
\[ \mathcal{F}_p = \mathcal{T}_p^{\perp} = \{ M \in \A \mid \sup\{ i \in [0,1] \mid L \in \cS_i \} \leq p \text{ for all submodules $L$ of $M$} \} \cup \{0\}. 
\] 
By Definition \ref{def:chain}, the slicing of $\sigma$  at $r \in [0,1]$ is given by 

\[ \cP_r^S  = \Big( \bigcap_{s<r} \mathcal{S}_s \Big) \cap \Big(  \bigcap_{s>r} (\mathcal{S}_s)^\perp \Big) \] and similarly for $\cP_r^{\eta^-\mho \sigma}$.
Thus, it is enough to show for all $r \in [0,1]$ that
\[ \bigcap_{s<r} \mathcal{S}_s = \bigcap_{s<r} \mathcal{T}_s  \text{ \quad (A)\quad and \quad} \bigcap_{s>r} (\mathcal{S}_s)^\perp = \bigcap_{s>r} \F_s  \text{\quad (B).} \]

(A): By Proposition~\ref{prop:comparision of chains}, we have that $\T_r \subseteq \cS_r$ for all $r \in [0,1]$ and the $(\supseteq)$ inclusion follows. 
To show the $(\subseteq)$ inclusion, let $M \in \cS_s$ for all $s<r$. Then $\sup\{i \in [0,1] \mid M \in\cS_i\} \geq r$ and so $M \in T_s$ for all $s <r$. 

(B): We first show the $(\subseteq)$ inclusion. Let $M \in \cS_s^\perp$ for all $ s > r$ and let $L$ be a submodule of $M$. Then, as torsionfree classes are closed under subobjects (Theorem~\ref{thm:dicksontorsion}(3)), we have that $L \in \cS_s^\perp$ for all $ s > r$. It follows that  $L \not\in \cS_s$ for all $s>r$ and thus $\sup\{ i \in [0,1] \mid L \in \cS_i \} \leq r$ and we conclude from the above description that $M \in \F_r$.

We now show the reverse inclusion. It follows from Theorem~\ref{thm:dicksontorsion}(3), that the intersection of torsionfree classes is again a torisonfree class, thus both subcategories that we are considering are torsionfree classes, that is, there are torsion pairs $(\mathcal{X}, \bigcap_{s>r} (\mathcal{S}_s)^\perp ) $ and $( \mathcal{Y},  \bigcap_{s>r} \F_s) $. We will show that $\mathcal{X} \subseteq \mathcal{Y}$ whence it follows that $\bigcap_{s>r} \F_s \subseteq \bigcap_{s>r} (\mathcal{S}_s)^\perp$.

As $\sigma$ is quasi-Noetherian, by Lemma~\ref{lem:union-intersection-torsion}, we have that $\mathcal{X} = \bigcup_{s>r}\cS_s$. Clearly, we also have that $\bigcup_{s>r}\T_s \subseteq \mathcal{Y}$. Thus it is enough to show that $\bigcup_{s>r}\cS_s \subseteq \bigcup_{s>r}\T_s$. Indeed, let let $M \in \cS_s$ for some $s>r$, then for $s>s'>r$, $M \in T_{s'}$ and we are done.
\end{proof}

\begin{corollary} \label{cor:max-min-slicing}
 Let $\sigma = (\mathcal{S}_i)_{i\in[0,1]} \in \fT(\mathcal{A})$. Then $\eta^+ \Omega_\sigma$ (resp. $\eta^- \mho_\sigma$) is  the maximal (resp. minimal) chain of torsion classes  with respect to having the same slicing as $\sigma$. That is, for all $S' \in \mathfrak{T}$ such that the slicing of $\sigma'$ and $\sigma$ coincide, then $\eta^+ \Omega_\sigma \leq \sigma' \leq \eta^- \mho_\sigma $. 
\end{corollary}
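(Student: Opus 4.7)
The plan is to give a concrete description of the torsion classes in $\eta^+\Omega_\sigma$ and $\eta^-\mho_\sigma$ in terms of the common slicing $\cP = \cP^\sigma$, and then to sandwich the torsion classes of an arbitrary $\sigma' = (\cS'_i)_{i \in [0,1]} \in \fT(\A)$ with the same slicing between them. Let $\T^+_p$ denote the $p$-th torsion class of $\eta^+\Omega_\sigma$ and $\T^-_p$ the $p$-th torsion class of $\eta^-\mho_\sigma$; in view of the order on $\fT(\A)$ (and as is consistent with Proposition~\ref{prop:comparision of chains}), the claim to prove is $\T^-_p \subseteq \cS'_p \subseteq \T^+_p$ for every $p \in [0,1]$.

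The first step is to establish, for $p \in (0,1)$, the formulas
\[ \T^+_p = \Filt\{ M \in \cP_r \mid r \geq p \} \cup \{0\}, \qquad \T^-_p = \Filt\{ M \in \cP_r \mid r > p \} \cup \{0\}. \]
The first follows by applying Corollary~\ref{cor:torsionclasses-other-description} to $\phi = \Omega_\sigma$ and identifying its $r$-semistable objects with $\cP_r$ using Theorems~\ref{thm:semistabchain} and \ref{prop:sameslicing}. For the second, Lemma~\ref{lemma: omega properties}(1) gives $\T^-_p = \{M \mid \mho_\sigma(M) > p\} \cup \{0\} = \bigcup_{q > p} \cS_q$, and an HN-filtration argument (a special case of the key claim below, applied to $\sigma' = \sigma$) identifies this union with $\Filt\{M \in \cP_r \mid r > p\}$.

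With these descriptions in hand, the two inclusions become short. For $\T^-_p \subseteq \cS'_p$: if $r > p$ and $M \in \cP_r$, the slicing axioms give $M \in \bigcap_{s < r}\cS'_s$ and hence $M \in \cS'_p$; closure of $\cS'_p$ under extensions extends this to the $\Filt$-closure. For $\cS'_p \subseteq \T^+_p$, take $M \in \cS'_p$ with HN filtration $0 = M_0 \subset \dots \subset M_n = M$ of phases $r_1 > \dots > r_n$ with $M_k/M_{k-1} \in \cP_{r_k}$. The key step, which I expect to be the only non-routine one, is the claim: if $X \in \cP_r \cap \cS'_p$ is nonzero then $r \geq p$. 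Indeed, $r < p$ would place $X \in (\cS'_p)^\perp$ by the slicing axioms, and together with $X \in \cS'_p$ this forces $\Hom(X,X) = 0$, a contradiction. Applying the claim to the last quotient $M_n/M_{n-1} \in \cP_{r_n}$, which lies in $\cS'_p$ since torsion classes are closed under quotients, gives $r_n \geq p$, hence $r_k \geq p$ for every $k$, and therefore $M \in \T^+_p$. The edge cases $p \in \{0,1\}$ are trivial since the torsion classes involved collapse to $\A$ or $\{0\}$.
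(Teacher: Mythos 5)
Your proof is correct, but it takes a genuinely different and substantially longer route than the paper's. The paper's argument is essentially a two-liner: since $\Omega_\eta$ and $\mho_\eta$ return the top and bottom phases of the Harder--Narasimhan filtration, they depend only on the slicing, so $\Omega_\sigma = \Omega_{\sigma'}$ and $\mho_\sigma = \mho_{\sigma'}$; the sandwich then follows by applying Proposition~\ref{prop:comparision of chains} to $\sigma'$ rather than to $\sigma$. You instead unwind the definitions to get explicit $\Filt$-descriptions of the torsion classes in $\eta^+\Omega_\sigma$ and $\eta^-\mho_\sigma$ in terms of the common slicing, and then prove both inclusions from scratch by a perpendicularity/HN argument --- in effect you re-derive the content of Proposition~\ref{prop:comparision of chains} for an arbitrary $\sigma'$ with the same slicing rather than invoking it for $\sigma$ and transporting via slicing-invariance. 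Both work; the paper's route buys brevity by recycling an already-proved proposition, while yours has the minor virtue of making the $\Filt$-descriptions of the extremal chains explicit. One small observation in your favour: you correctly read past the typo in the statement (the printed inequality $\eta^+\Omega_\sigma \leq \sigma' \leq \eta^-\mho_\sigma$ is backwards relative to the maximal/minimal claim) and prove the intended version $\eta^-\mho_\sigma \leq \sigma' \leq \eta^+\Omega_\sigma$.

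One point to be aware of, although it is a wrinkle shared with the paper itself rather than a defect of your argument specifically: you invoke Corollary~\ref{cor:torsionclasses-other-description} with $\phi = \Omega_\sigma$, which formally assumes $\phi \in \WStab\A$; as the paper's own remark after Theorem~\ref{thm:ohm-mho-are-wsc} notes, $\Omega_\sigma$ and $\mho_\sigma$ need not be weakly-Artinian or quasi-Noetherian even when $\sigma$ is. The paper applies $\eta^{\pm}$ to these functions throughout Section~3 under the same tacit convention, so this is consistent with its usage, but if you want a self-contained argument you can avoid it: $\T^+_p = \{M \mid \Omega_{\sigma'}(N) \geq p \text{ for all quotients } N \text{ of } M\} \cup \{0\}$ by definition, and $\Omega_{\sigma'}(N) \geq p$ for any nonzero $N \in \cS'_p$ follows directly from $\cS'_i \cap (\cS'_i)^\perp = \{0\}$ for $i < p$; this bypasses the $\Filt$-description and the HN filtration entirely for the right-hand inclusion.
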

\begin{proof}
If the slicings of $\sigma$ and $\sigma'$ coincide then $\Omega_\sigma = \Omega_{\sigma'}$ and $\mho_\sigma = \mho_{\sigma'}$. Whence the claim follows from Proposition \ref{prop:comparision of chains}. 
\end{proof}

\subsection{Total stability}
There has been interest on studying the stability conditions that make every object in the category semistable (see \cite{DiazGilbertKinser} and the references therein). 
We now give a characterisation of the chains of torsion classes having this property. 
By $\WStab^{\tot}\A$ we denote the class of weak stability conditions $\phi$ on  $\A$ such that every indecomposable object $M\in \A$ is $\phi$-semistable.
Recall that a torsion pair $(\T, \F)$ in an abelian category $\A$ is called \emph{split} if $\Ext^1_\A (\F, \T) = 0 $ or, equivalently, that every indecomposable object in $\A$ belongs to $\T$ or $\F$.

\begin{proposition} \label{thm:split-tors-total-wstab}
Let $\phi \in \WStab^{\tot}\A$ for an abelian category $\A$. Then for all $p\in [0,1]$, $(\T_{\geq p}, \F_{<p})$ and $(\T_{> p}, \F_{\leq p})$ are both split torsion pairs. Conversely, let $\eta = (\T_i)_{[0,1]} \in \mathfrak{T}(\A)$ such that $(\T_i, \F_i)$ is split for all $i \in [0,1]$ then each indecomposable $M \in \A$ is $\eta$-quasisemistable and $\Omega_\eta, \mho_\eta \in \WStab^{\tot}\A$.   
\end{proposition}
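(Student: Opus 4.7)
My overall plan is to handle the two directions in sequence, using in each case the defining hypothesis ($\phi$-semistability of indecomposables in one direction, splitness of each $(\T_i,\F_i)$ in the other) together with the closure properties of torsion and torsionfree classes.

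For the forward direction, I fix $p \in [0,1]$ and an indecomposable $M$. Since $\phi$ is a weak stability condition, applying the two-way inequality to any short exact sequence $0 \to L \to M \to N \to 0$ of nonzero objects and combining it with $\phi$-semistability of $M$ yields $\phi(L) \leq \phi(M) \leq \phi(N)$ for every nontrivial subobject $L$ with quotient $N$. I then split into two cases. If $\phi(M) \geq p$, every quotient $N$ of $M$ satisfies $\phi(N) \geq p$, so $M \in \T_{\geq p}$. Otherwise $\phi(M) < p$, and every subobject $L$ of $M$ satisfies $\phi(L) < p$, so $M \in \F_{<p}$. Either way, every indecomposable is in one of the two classes, which is exactly splitness of $(\T_{\geq p}, \F_{<p})$. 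The same dichotomy applied with $\phi(M) > p$ versus $\phi(M) \leq p$ handles $(\T_{>p}, \F_{\leq p})$.

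For the converse, let $M$ be an indecomposable object and set $t := \mho_\eta(M) = \sup \{ i \in [0,1] \mid M \in \T_i \}$. For each $s < t$, the definition of supremum furnishes some $i \in (s,t]$ with $M \in \T_i$, and the nesting $\T_i \subseteq \T_s$ then gives $M \in \T_s$. For each $s > t$, by definition $M \notin \T_s$; since $M$ is indecomposable and $(\T_s,\F_s)$ is split, $M \in \F_s$. Combining these two observations (and inspecting the boundary cases $t \in \{0,1\}$ in Definition~\ref{def:chain}, where only one intersection appears), one obtains $M \in \bigcap_{s<t}\T_s \cap \bigcap_{s>t}\F_s = \cP^\eta_t$, so $M$ is $\eta$-quasisemistable. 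A symmetric argument with $\Omega_\eta(M) = \inf\{i \mid M \in \F_i\}$ shows $\Omega_\eta(M) = t$ as well.

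To finish, I would verify $\mho_\eta$-semistability of each indecomposable $M$ directly: for any nontrivial subobject $L \subset M$, closure of $\F_s$ under subobjects yields $L \in \F_s$ for every $s > t$, so $\mho_\eta(L) \leq t$; closure of $\T_s$ under quotients yields $M/L \in \T_s$ for every $s < t$, so $\mho_\eta(M/L) \geq t$. Hence $\mho_\eta(L) \leq t \leq \mho_\eta(M/L)$, as required. The dual argument---swapping sub- and quotient-objects and invoking closure of $\T_s$ and $\F_s$ in the appropriate directions---gives $\Omega_\eta \in \WStab^{\tot}\A$. The only real delicacy I anticipate is bookkeeping: keeping the directions of the nestings $\T_i \subseteq \T_j$ straight and remembering that the supremum defining $\mho_\eta(M)$ need not be attained, so one should never assert $M \in \T_t$ outright.
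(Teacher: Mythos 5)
Your proof is correct and reaches the same conclusion as the paper's via a somewhat different route. On the forward direction you re-derive, in line, the content of Corollary~\ref{cor:torsionclasses-other-description}: where the paper invokes that corollary to place each $\phi$-semistable indecomposable $M$ into $\T_{\geq p}$ or $\F_{<p}$ according to the size of $\phi(M)$, you obtain the same dichotomy directly from the inequalities $\phi(L) \leq \phi(M) \leq \phi(M/L)$ that $\phi$-semistability forces for every nontrivial subobject $L$ of $M$. On the converse the paper argues by contradiction, asserting that if an indecomposable $M$ is not $\eta$-quasisemistable then the canonical sequence of $M$ with respect to $(\T_p,\F_p)$ at $p=\Omega_\eta(M)$ is non-split, and then appeals to Proposition~\ref{prop:sameslicing} and Theorem~\ref{thm:semistabchain} to upgrade quasisemistability of indecomposables to $\Omega_\eta$- and $\mho_\eta$-semistability. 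You instead set $t:=\mho_\eta(M)$, show $M\in\cP^\eta_t$ directly from splitness and the nesting of the chain (treating the endpoints $t\in\{0,1\}$ separately), and verify $\mho_\eta$- and $\Omega_\eta$-semistability of $M$ bare-handed using closure of torsion classes under quotients and of torsionfree classes under subobjects. Your route is more self-contained -- it avoids leaning on the comparison results of Section~3 for the final step -- and it makes explicit the point that the supremum defining $\mho_\eta(M)$ need not be attained, which the paper's terse contradiction argument leaves implicit; the modest trade-off is a bit more bookkeeping.
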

\begin{proof}
Let $p \in [0,1]$, we prove the claim for the pair $(\T_{\geq p}, \F_{<p})$, the other case being similar. By Proposition~\ref{prop:stabtorsion}, the pair is a torsion pair and it remains to show that it is split. Let $M \in \A$ be indecomposable and let  $0 \to T \to M \to F \to 0$ be the canonical short exact sequence of $M$ with respect to $(\T_{\geq p}, \F_{<p})$. By Corollary~\ref{cor:torsionclasses-other-description} we have that $\T_{\geq p}$ (resp. $\F_{< p}$) contains all $\phi$-semistable objects of phase at least $p$ (resp. less than $p$). By assumption $M$ is $\phi$-semistable, thus if $\phi(M) \geq p$ then $M \cong T$ or else $M \cong F$. We conclude that  $(\T_{\geq p}, \F_{<p})$ is split. 

Now let $\eta = (\T_i)_{i \in [0,1]}$ and suppose that there exists $X \in \A$ indecomposable such that $X$ is not $\eta$-quasisemistable. At $p = \Omega_\eta M$, the canonical short exact sequence  \[ 0 \to t_p M \to M \to f_p \to 0 \]  of the torsion pair $(\T_p, \F_p)$ is non-split. 
The final claim follows from Proposition~\ref{prop:sameslicing} and Theorem~\ref{thm:semistabchain}. 
\end{proof}

\subsection{An equivalence relation on \texorpdfstring{$\CT(\A)$}{CT(A)} and WStab\texorpdfstring{$\A$}{A}} \label{section:equivrelation}

As we have seen before, the properties of chains of torsion classes and weak stability conditions rely heavily on the underlying slicing. 
This induces a natural equivalence relation as follows.

\begin{definition}
We define a natural equivalence relation $\sim$ on $\CT(\A)$ defined by $\eta \sim \eta'$ if and only if $\P^\eta_r = \P^{\eta'}_r$ for all $0 \leq r \leq 1$.

Similarly, we define an equivalence relation $\sim$ in $\WStab \A$ defined by $\phi \sim \phi'$ whenever every object $M\in \Obj^*(\A)$ is $\phi$-semistable if and only if it is $\phi'$-semistable and, if this is the case, then $\phi(M)=\phi'(M)$.
\end{definition}

As a direct consequence of the previous definition and Theorem~\ref{thm:semistabchain} we obtain the following fact.

\begin{proposition} \label{prop: slice(A)  cong T(A)}
The spaces $\WStab \A/\sim$ and $\CT(\A) / \sim$ coincide. Moreover both coincide with $\Slice (\A)$. 
\end{proposition}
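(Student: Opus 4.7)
The plan is to exhibit a commutative diagram of bijections between the three sets, with the slicing providing a common target. Concretely, I would define
\[
 \Phi : \fT(\A)/{\sim} \longrightarrow \Slice(\A), \qquad [\eta] \longmapsto \cP^\eta,
\]
and
\[
 \Psi : \WStab(\A)/{\sim} \longrightarrow \Slice(\A), \qquad [\phi] \longmapsto \cP^\phi,
\]
where $\cP^\eta$ is the slicing from Definition~\ref{def:chain} and $\cP^\phi$ is the slicing of $\phi$ as introduced after Theorem~\ref{thm:semistabchain}. Both maps are well-defined essentially by the definition of $\sim$: two elements are equivalent exactly when they produce the same slicing (for $\CT(\A)$ this is literally the definition; for $\WStab(\A)$ the equivalence prescribes the same semistable objects with the same phases, which by Theorem~\ref{thm:semistabchain} is the same data as the slicing).

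Next I would verify that $\Phi$ is a bijection. Injectivity is immediate: if $\cP^\eta = \cP^{\eta'}$, then $\eta \sim \eta'$ by definition. Surjectivity is exactly the content of Theorem~\ref{thm:slicingchains}, which asserts that every slicing arises from some $\eta \in \CT(\A)$.

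For $\Psi$, injectivity follows from Theorem~\ref{thm:semistabchain}: if $\cP^\phi = \cP^{\phi'}$ then for every $M$ and every $t$, $M \in \cP^\phi_t$ if and only if $M \in \cP^{\phi'}_t$, which says precisely that $M$ is $\phi$-semistable iff it is $\phi'$-semistable, and in that case $\phi(M) = t = \phi'(M)$, hence $\phi \sim \phi'$. For surjectivity, given a slicing $\cP$, use Theorem~\ref{thm:slicingchains} to produce $\eta \in \fT(\A)$ with $\cP^\eta = \cP$; then $\Omega_\eta \in \WStab(\A)$ by Theorem~\ref{thm:ohm-mho-are-wsc}, and Proposition~\ref{prop:sameslicing} combined with Theorem~\ref{thm:semistabchain} yields $\cP^{\Omega_\eta} = \cP^\eta = \cP$, so $\Psi([\Omega_\eta]) = \cP$.

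Finally, composing $\Psi^{-1}\circ \Phi$ (or equivalently, sending $[\eta] \mapsto [\Omega_\eta]$ and $[\phi] \mapsto [\eta^+_\phi]$) gives the claimed bijection between $\CT(\A)/{\sim}$ and $\WStab(\A)/{\sim}$; its well-definedness and the fact that it is mutually inverse reduce once again to Proposition~\ref{prop:sameslicing} and Theorem~\ref{thm:semistabchain}. I do not anticipate a serious obstacle here, since all of the non-formal content is contained in the cited results; the only thing to be careful about is the well-definedness of $[\phi] \mapsto [\eta^+_\phi]$, i.e.\ that equivalent weak stability conditions produce equivalent chains of torsion classes, which follows from the descriptions of $\T_{\geq p}$ in Corollary~\ref{cor:torsionclasses-other-description} purely in terms of semistable objects and their phases.
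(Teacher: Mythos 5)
Your strategy --- identifying both quotients with $\Slice(\A)$ via the slicing maps and relying on Theorems~\ref{thm:semistabchain} and~\ref{thm:slicingchains} --- is exactly what the paper intends with its one-line proof; you merely spell out the well-definedness and bijectivity. There is, however, a real soft spot in your surjectivity argument for $\Psi$: you invoke $\Omega_\eta \in \WStab \A$ ``by Theorem~\ref{thm:ohm-mho-are-wsc}'', but that theorem only says $\Omega_\eta$ is a weak stability condition; it does not say $\Omega_\eta$ is weakly-Artinian and quasi-Noetherian, which is what membership in $\WStab \A$ requires. The remark immediately following Theorem~\ref{thm:ohm-mho-are-wsc} explicitly warns that for $\eta \in \CT(\A)$ the weak stability conditions $\Omega_\eta$ and $\mho_\eta$ need not be weakly-Artinian or quasi-Noetherian, citing $\eta\colon 0 \subset \A$ in $\Mod\Lambda$. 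For that $\eta$ (which does lie in $\CT(\A)$), the slicing is the trivial one $\cP_0 = \A$, and Theorem~\ref{thm:semistabchain} then forces any representing $\phi$ to be constant, which fails quasi-Noetherianity whenever $\A$ admits infinite ascending chains of subobjects. So as written the surjectivity of $\Psi$ does not go through for a general small abelian category; your argument (and, to be fair, the paper's terse proof as well) is fine under an implicit finite-length hypothesis, where every weak stability condition automatically lies in $\WStab \A$, but in full generality you would need either a strengthened hypothesis or a different way of producing an element of $\WStab \A$ over each slicing.
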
 
\begin{proof}
The fact that $\WStab \A/\sim$ and $\CT(\A) / \sim$ coincide follows directly from Theorem~\ref{thm:semistabchain}. The moreover part of the statement follows from Theorem~\ref{thm:slicingchains}.
\end{proof}

\begin{remark}
It follows from Proposition \ref{prop:sameslicing} and Corollary \ref{cor:max-min-slicing} that for a chain of torsion classes $\sigma \in \mathfrak{T} (\A)$, that  $\eta^- \mho_\sigma $,  $\eta^- \Omega_\sigma ,$ $\sigma $, $\eta^+ \mho_\sigma $, $\eta^+ \Omega_\sigma $ are all in the same equivalence class and that $\eta^+ \Omega_\sigma $ and $\eta^- \mho_\sigma $ can be thought of as minimal and maximal representatives of this equivalence class respectively.
\end{remark}


\section{The space of chains of torsion classes}\label{sec:spacetor}

In this section we follow ideas of \cite{Bridgeland2007} to show that $\CT(\A)$ has a natural topological structure and we study some of its properties.
Our main strategy consists of lifting topological properties from the closed interval $[0,1]$ of the real numbers with the canonical topology to $\CT(\A)$. 

In the next result we prove that the function $d: \CT(\A) \times \CT(\A) \to \mathbb{R}$ defined in \cite[Section 6]{Bridgeland2007} is a pseudometric in $\CT(\A)$; that is, the following conditions hold:
\begin{enumerate}
    \item $d(x,x)=0$ for all $x \in \CT(\A)$;
    \item $d(x,y)=d(y,x)$ for all $x, y \in \CT(\A)$;
    \item $d(x,y) \leq d(x,z) + d(z,y)$ for all $x,y,z \in \CT(\A)$.
\end{enumerate}
In other words a pseudometric is a metric where the distance between two different points might be $0$.
The reader should be aware that in the following statement we will be using the functions $\Omega_\eta$ and $\mho_\eta$  as defined in Definition~\ref{def:ohm-mho}.

\begin{proposition}\label{prop:metric}
Define the function $d: \CT(\A) \times \CT(\A) \to \mathbb{R}$ as 
$$d(\eta_1, \eta_2):= \sup \left\{ \left|\mho_{\eta_1}(M)-\mho_{\eta_2}(M)\right|, \left|\Omega_{\eta_1}(M)-\Omega_{\eta_2}(M)\right| \mid  M \in \Obj^{\ast} \A  \right\}$$
where $\eta_i \in \CT(\A)$.
Then the function $d: \CT(\A) \times \CT(\A) \to \mathbb{R}$ is well-defined and induces a pseudometric on the space $\CT(\A)$.
\end{proposition}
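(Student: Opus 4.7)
The plan is to verify the three pseudometric axioms directly, after first disposing of well-definedness. Since $\mho_{\eta_i}(M), \Omega_{\eta_i}(M) \in [0,1]$ for every non-zero $M$ and every $\eta_i \in \CT(\A)$, each absolute value inside the set whose supremum we take is bounded above by $1$. Hence the supremum exists in $\R$ (indeed, lies in $[0,1]$), so $d$ is a well-defined $\R$-valued function on $\CT(\A) \times \CT(\A)$. Note this is the only place one needs the codomain of the stability functions to be a bounded interval; using an unbounded totally ordered codomain one would instead obtain an extended pseudometric.

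For axiom (1), the identity $d(\eta,\eta)=0$ is immediate because every term $|\mho_\eta(M)-\mho_\eta(M)|$ and $|\Omega_\eta(M)-\Omega_\eta(M)|$ vanishes. Axiom (2), symmetry, follows at once from the symmetry of $|\cdot|$, since the defining set for $d(\eta_1,\eta_2)$ equals the defining set for $d(\eta_2,\eta_1)$.

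The only content is the triangle inequality (3). Here I would proceed pointwise: fix $\eta_1,\eta_2,\eta_3\in \CT(\A)$ and an arbitrary $M \in \Obj^\ast \A$. Applying the triangle inequality for $|\cdot|$ on $\R$,
\[
|\mho_{\eta_1}(M)-\mho_{\eta_3}(M)| \leq |\mho_{\eta_1}(M)-\mho_{\eta_2}(M)| + |\mho_{\eta_2}(M)-\mho_{\eta_3}(M)|,
\]
and each summand on the right is bounded by $d(\eta_1,\eta_2)$ and $d(\eta_2,\eta_3)$ respectively, by definition of $d$ as a supremum. The identical argument applies with $\Omega$ in place of $\mho$. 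Hence every element of the defining set for $d(\eta_1,\eta_3)$ is bounded above by $d(\eta_1,\eta_2)+d(\eta_2,\eta_3)$, and taking the supremum yields the triangle inequality.

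No step here is a real obstacle; the proof is essentially a transfer along the bounded functions $\mho_\eta,\Omega_\eta$ of the standard pseudometric coming from the supremum of absolute differences of $[0,1]$-valued functions on a set. The one conceptual point worth flagging, which the statement already anticipates by asserting only a pseudometric rather than a metric, is that $d(\eta_1,\eta_2)=0$ need not imply $\eta_1=\eta_2$: by Proposition~\ref{prop:sameslicing} and the preceding discussion, distinct chains may share the same $\Omega$ and $\mho$, so $d$ only separates chains up to the equivalence relation $\sim$ of Section~\ref{section:equivrelation}, which fits naturally with the subsequent passage to $\Slice(\A)$.
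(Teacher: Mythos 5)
Your proof is correct and follows essentially the same route as the paper's: axioms (1) and (2) are immediate, and the triangle inequality is obtained by applying the scalar triangle inequality pointwise to each $|\mho_{\eta_i}(M)-\mho_{\eta_j}(M)|$ and $|\Omega_{\eta_i}(M)-\Omega_{\eta_j}(M)|$ and then passing to the supremum. Your additional remarks on well-definedness (boundedness by the codomain $[0,1]$) and on why $d$ is only a pseudometric are accurate and helpful context, but the mathematical content matches the paper's argument.
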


\begin{proof}
The fact that $d(\eta, \eta)=0$ and that $d(\eta_1, \eta_2) = d(\eta_2, \eta_1)$ are immediate from the definition. 
So we only show that $d(\eta_1, \eta_3) \leq d(\eta_1, \eta_2) +d(\eta_2, \eta_3)$.

In order to do that, let $M \in \Obj^{\ast} \A$. 
Then the following hold
\[
\left|\mho_{\eta_1}(M)-\mho_{\eta_3}(M)\right| \leq \left|\mho_{\eta_1}(M)-\mho_{\eta_2}(M)\right| +\left|\mho_{\eta_2}(M)-\mho_{\eta_3}(M)\right|,
\]
\[
\left|\Omega_{\eta_1}(M)-\Omega_{\eta_3}(M)\right| \leq \left|\Omega_{\eta_1}(M)-\Omega_{\eta_2}(M)\right| +\left|\Omega_{\eta_2}(M)-\Omega_{\eta_3}(M)\right|.
\]
Therefore we have the following series of inequalities.
\begin{align*}
    d(\eta_1, \eta_3)  & = \sup \left\{ \left|\mho_{\eta_1}(M)-\mho_{\eta_3}(M)\right|, \left|\Omega_{\eta_1}(M)-\Omega_{\eta_3}(M)\right|  \mid  M \in \Obj^{\ast} \A  \right\}\\
     & \leq \sup \left\{ \left|\mho_{\eta_1}(M)-\mho_{\eta_2}(M)\right| + \left|\mho_{\eta_2}(M)-\mho_{\eta_3}(M)\right|, \right.\\
    & \phantom{= \sup (( }  \left.  \left|\Omega_{\eta_1}(M)-\Omega_{\eta_2}(M)\right| + \left|\Omega_{\eta_2}(M)-\Omega_{\eta_3}(M)\right|   \mid  M \in \Obj^{\ast} \A   \right\} \\
    & \leq \sup \left\{ \left|\mho_{\eta_1}(M)-\mho_{\eta_2}(M)\right|, \left|\Omega_{\eta_1}(M)-\Omega_{\eta_2}(M)\right|  \mid  M \in \Obj^{\ast} \A   \right\} \\
     & \phantom{=} \, + \sup \left\{   \left|\mho_{\eta_2}(M)-\mho_{\eta_3}(M)\right| , \left|\Omega_{\eta_2}(M)-\Omega_{\eta_3}(M)\right|  \mid  M \in \Obj^{\ast} \A   \right\} \\
     & = d(\eta_1, \eta_2) +  d(\eta_2, \eta_3).
\end{align*}
\end{proof}

Even if the definition of the function $d$ is concise, given two different chain of torsion classes $\eta_1$ and $\eta_2$ their distance $d(\eta_1, \eta_2)$ might be rather difficult to compute.
The next result of \cite{Bridgeland2007} give us a different way to calculate this distance. 

\begin{lemma}\cite[Lemma 6.1]{Bridgeland2007}\label{lem:distance}
Let $\eta_1, \eta_2 \in \fT(\A)$.  
Then 
$$d(\eta_1, \eta_2)= \inf \left\{ \varepsilon \in \mathbb{R} \mid \P^{\eta_2}_r \subset \Filt\left(\bigcup_{t\in [r-\varepsilon, r+\varepsilon]}\P_t^{\eta_1} \right)\text{ for all $r \in [0,1]$}\right\}.$$
\end{lemma}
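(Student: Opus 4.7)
The plan is to denote the right-hand side by $\delta(\eta_1,\eta_2)$ and to establish the two inequalities $\delta \leq d$ and $d \leq \delta$ separately. Throughout I shall freely use two consequences of earlier results: by Theorem~\ref{thm:HN-filt}, $\Omega_\eta(M)$ and $\mho_\eta(M)$ equal respectively the largest and smallest phases in the $\eta$-Harder-Narasimhan filtration of $M$; and by combining Proposition~\ref{prop:sameslicing} with Theorem~\ref{thm:semistabchain}, an object $X$ lies in $\P^{\eta}_r$ precisely when it is both $\Omega_\eta$- and $\mho_\eta$-semistable with $\Omega_\eta(X) = \mho_\eta(X) = r$.

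For the direction $\delta \leq d$, I would fix $\varepsilon > d(\eta_1,\eta_2)$, $r\in[0,1]$, and $X\in\P^{\eta_2}_r$. Then $\Omega_{\eta_2}(X)=\mho_{\eta_2}(X)=r$, and the definition of $d$ forces $|\Omega_{\eta_1}(X)-r|,|\mho_{\eta_1}(X)-r|<\varepsilon$. If $t_1>\cdots>t_n$ are the phases of the $\eta_1$-HN filtration of $X$, then $t_1=\Omega_{\eta_1}(X)$ and $t_n=\mho_{\eta_1}(X)$, so every $t_k$ lies in $[r-\varepsilon,r+\varepsilon]$, whence $X\in\Filt\bigl(\bigcup_{t\in[r-\varepsilon,r+\varepsilon]}\P^{\eta_1}_t\bigr)$. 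Taking the infimum over such $\varepsilon$ gives $\delta\leq d$.

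The reverse inequality $d\leq\delta$ is the hard part. Fix $\varepsilon$ satisfying $\P^{\eta_2}_r\subseteq\Filt\bigl(\bigcup_{t\in[r-\varepsilon,r+\varepsilon]}\P^{\eta_1}_t\bigr)$ for every $r$; the goal is $d(\eta_1,\eta_2)\leq\varepsilon$. The crux, and the main obstacle, is to upgrade this one-sided hypothesis to the symmetric containment $\P^{\eta_1}_t\subseteq\Filt\bigl(\bigcup_{s\in[t-\varepsilon,t+\varepsilon]}\P^{\eta_2}_s\bigr)$. I would prove this by taking $X\in\P^{\eta_1}_t$ together with its $\eta_2$-HN filtration $0=Y_0\subset\cdots\subset Y_n=X$ with phases $s_1>\cdots>s_n$, and refining each factor $Y_k/Y_{k-1}\in\P^{\eta_2}_{s_k}$ via the hypothesis. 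From the refinement of $Y_1$ one obtains an $\eta_1$-semistable filtration of $Y_1$ with phases in $[s_1-\varepsilon,s_1+\varepsilon]$, so $\mho_{\eta_1}(Y_1)\geq s_1-\varepsilon$; dually, $\Omega_{\eta_1}(X/Y_{n-1})\leq s_n+\varepsilon$. On the other hand, since $X$ is $\Omega_{\eta_1}$- and $\mho_{\eta_1}$-semistable of phase $t$, Lemma~\ref{lemma: omega properties} combined with semistability yields $\mho_{\eta_1}(Y_1)\leq\mho_{\eta_1}(X)=t$ for any subobject and $\Omega_{\eta_1}(X/Y_{n-1})\geq\Omega_{\eta_1}(X)=t$ for any quotient. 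Comparing these bounds forces $s_1\leq t+\varepsilon$ and $s_n\geq t-\varepsilon$, so all $s_k\in[t-\varepsilon,t+\varepsilon]$, proving the symmetric containment.

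With both containments in hand, for an arbitrary $M\in\Obj^\ast\A$ I would refine the $\eta_2$-HN filtration of $M$ through the hypothesis to produce a filtration by $\eta_1$-semistable objects all of whose phases lie in $[\mho_{\eta_2}(M)-\varepsilon,\Omega_{\eta_2}(M)+\varepsilon]$; iterating Lemma~\ref{lemma: omega properties} across this filtration then yields $\Omega_{\eta_1}(M)\leq\Omega_{\eta_2}(M)+\varepsilon$ and $\mho_{\eta_1}(M)\geq\mho_{\eta_2}(M)-\varepsilon$. Applying the symmetric containment to the $\eta_1$-HN filtration of $M$ in exactly the same way supplies the reverse inequalities, and together these give $|\Omega_{\eta_1}(M)-\Omega_{\eta_2}(M)|\leq\varepsilon$ and $|\mho_{\eta_1}(M)-\mho_{\eta_2}(M)|\leq\varepsilon$. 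This yields $d(\eta_1,\eta_2)\leq\varepsilon$, and passing to the infimum gives $d\leq\delta$.
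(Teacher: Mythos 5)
The paper states this lemma as a citation of \cite[Lemma 6.1]{Bridgeland2007} and provides no proof of its own, so there is nothing in the text to compare your argument against beyond that reference. That said, your proof is correct and is essentially the argument Bridgeland gives in the triangulated setting, transported to the abelian situation: the easy inequality $\delta\le d$ follows from tracking the $\eta_1$-HN phases of an $\eta_2$-quasisemistable object, and the hard inequality $d\le\delta$ hinges on bootstrapping the one-sided filtration hypothesis into the symmetric containment $\P^{\eta_1}_t\subseteq\Filt(\bigcup_{s\in[t-\varepsilon,t+\varepsilon]}\P^{\eta_2}_s)$, which you do exactly as in the cited source by comparing the extremal phases of the first subobject and last quotient of the $\eta_2$-HN filtration of an object of $\P^{\eta_1}_t$. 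One small cosmetic point: the bound $\mho_{\eta_1}(Y_1)\le t$ is most cleanly obtained not from Lemma~\ref{lemma: omega properties} per se but from the fact that $X\in\P^{\eta_1}_t\subseteq\F_s^{\eta_1}$ for all $s>t$ and torsionfree classes are closed under subobjects, giving $\Omega_{\eta_1}(Y_1)\le t$ directly (the dual observation, with torsion classes closed under quotients, gives $\mho_{\eta_1}(X/Y_{n-1})\ge t$). Your route via ``Lemma~\ref{lemma: omega properties} plus semistability'' also works once unpacked, but the closure properties make it immediate.
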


An important corollary of Proposition \ref{prop:metric} is that $\CT(\A)$ is a topological space and that we can give an explicit basis for that topology.

\begin{corollary}\label{cor:basis}
 $\CT(\A)$ is a topological space with a basis given by the sets 
$$B_\varepsilon(\eta):= \{ \eta' \in \CT(\A) \mid  d(\eta, \eta') < \varepsilon\}$$
for all $\eta \in \CT(\A)$ and $0 < \varepsilon < 1$.
\end{corollary}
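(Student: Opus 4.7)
The plan is to invoke the standard construction that any pseudometric on a set produces a topology with the open balls as a basis, and then verify the two basis axioms directly using the properties of $d$ established in Proposition~\ref{prop:metric}. No new ideas beyond the triangle inequality are needed; the only mild subtlety is the restriction $0 < \varepsilon < 1$ in the statement, which needs to be checked not to spoil the basis property.

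First I would recall that a collection $\mathcal{B}$ of subsets of a set $X$ is a basis for a topology if (i) every point of $X$ lies in some member of $\mathcal{B}$, and (ii) for every $\eta \in B_1 \cap B_2$ with $B_1, B_2 \in \mathcal{B}$ there exists $B_3 \in \mathcal{B}$ with $\eta \in B_3 \subset B_1 \cap B_2$. Property (i) is immediate since $d(\eta,\eta) = 0 < \varepsilon$ for any $\varepsilon > 0$, so $\eta \in B_\varepsilon(\eta)$ for every admissible radius.

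For (ii), given $\eta \in B_{\varepsilon_1}(\eta_1) \cap B_{\varepsilon_2}(\eta_2)$, I would set
\[
\delta := \tfrac{1}{2}\min\bigl(\varepsilon_1 - d(\eta, \eta_1),\, \varepsilon_2 - d(\eta, \eta_2),\, 1\bigr),
\]
which is strictly positive by assumption and strictly less than $1$. The triangle inequality from Proposition~\ref{prop:metric}(3) then shows that $B_\delta(\eta) \subset B_{\varepsilon_1}(\eta_1) \cap B_{\varepsilon_2}(\eta_2)$: for any $\eta' \in B_\delta(\eta)$ we have $d(\eta', \eta_i) \leq d(\eta', \eta) + d(\eta, \eta_i) < \delta + d(\eta, \eta_i) \leq \varepsilon_i$ for $i = 1, 2$.

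The main (and only) thing to double-check is that capping the radii at $\varepsilon < 1$ does not lose anything: since we can always shrink $\delta$ further to stay strictly below $1$ without invalidating the inclusion above, the bound is harmless. I do not anticipate any real obstacle, as this is a textbook pseudometric argument; the content of the corollary is essentially just to record that the triangle inequality established in Proposition~\ref{prop:metric} suffices to promote $\CT(\A)$ to a topological space in which the sets $B_\varepsilon(\eta)$ are, by definition, a basis of open neighbourhoods.
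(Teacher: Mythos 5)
Your argument is correct and is exactly the standard pseudometric-to-topology construction; the paper states this corollary without proof as an immediate consequence of Proposition~\ref{prop:metric}, so there is no divergence of method to report. Your verification of the two basis axioms via the triangle inequality, and your observation that capping radii at $\varepsilon < 1$ is harmless, are the right (if routine) checks.
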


The next result gives a sufficient condition for two chains to be a distance $\varepsilon < \frac{1}{2}$ apart.  

\begin{proposition}
Let $\eta$ be the chain of torsion classes in $\CT(\A)$ such that $\T_r=\X$ for all $r \in (0,1)$, where $\X$ is a fixed torsion class in $\A$. 
If $0 <\varepsilon < \frac{1}{2}$, then $\eta' \in B_{\varepsilon}(\eta)$ if and only if $\T'_r = \X$ for all $r \in [\varepsilon, 1 - \varepsilon]$.
\end{proposition}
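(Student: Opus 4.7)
The strategy is to exploit the fact that $\eta$ has only two non-trivial slices, $\P^\eta_1 = \X$ and $\P^\eta_0 = \X^\perp$, in order to compute $\mho_\eta$ and $\Omega_\eta$ explicitly and translate the metric condition $d(\eta,\eta') < \varepsilon$ into the torsion-class condition $\T'_r = \X$. From Definition~\ref{def:ohm-mho} one checks directly that, for a non-zero $M \in \A$, $\mho_\eta(M) = \Omega_\eta(M) = 1$ if $M \in \X$, $\mho_\eta(M) = \Omega_\eta(M) = 0$ if $M \in \X^\perp$, and $\mho_\eta(M) = 0$, $\Omega_\eta(M) = 1$ otherwise. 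This is the foothold for both directions.

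For the $(\Rightarrow)$ direction I assume $d(\eta,\eta') < \varepsilon$ and fix $r \in [\varepsilon, 1-\varepsilon]$. To show $\X \subseteq \T'_r$: any non-zero $M \in \X$ satisfies $\mho_{\eta'}(M) > \mho_\eta(M) - \varepsilon = 1-\varepsilon \geq r$, so $M \in \T'_{r'}$ for some $r' > r$, whence $M \in \T'_r$. For the reverse inclusion, take $M \in \T'_r$ and consider its canonical $(\X,\X^\perp)$-sequence $0 \to tM \to M \to fM \to 0$. As torsion classes are closed under quotients, $fM \in \T'_r$. However, any non-zero $N \in \X^\perp$ has $\mho_{\eta'}(N) < \mho_\eta(N) + \varepsilon = \varepsilon \leq r$, so $N \notin \T'_r$; applying this to $fM$ forces $fM = 0$, hence $M = tM \in \X$.

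For the $(\Leftarrow)$ direction, the assumption $\T'_r = \X$ on $[\varepsilon, 1-\varepsilon]$ together with order-reversal of the chain gives $\T'_r \supseteq \X$ for $r \leq \varepsilon$ and $\F'_r \subseteq \X^\perp$ for $r \geq \varepsilon$ (equivalently $\F'_r \supseteq \X^\perp$ for $r \geq 1-\varepsilon$). A case-by-case verification on the three types of $M$ above then shows that both $|\mho_\eta(M) - \mho_{\eta'}(M)|$ and $|\Omega_\eta(M) - \Omega_{\eta'}(M)|$ are at most $\varepsilon$: for instance, when $M \in \X$, the containment $M \in \T'_{1-\varepsilon}$ gives $\mho_{\eta'}(M) \geq 1-\varepsilon$, while $\X \cap \F'_r \subseteq \X \cap \X^\perp = 0$ for $r \leq 1-\varepsilon$ gives $\Omega_{\eta'}(M) \geq 1-\varepsilon$; the $M \in \X^\perp$ and mixed cases are dual. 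Taking the supremum over $M$ then produces the desired bound on $d(\eta,\eta')$.

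The main obstacle I foresee is obtaining the strict inequality in the $(\Leftarrow)$ direction: the bound $\mho_{\eta'}(M) \geq 1-\varepsilon$ for $M \in \X$ can be attained with equality (by any $\eta'$ that drops sharply at the endpoints $\varepsilon$ or $1-\varepsilon$), so the naive estimate only delivers $d(\eta,\eta') \leq \varepsilon$. Sharpening this to $d(\eta,\eta') < \varepsilon$ seems to demand either reformulating via the slicing criterion of Lemma~\ref{lem:distance} (where the filtration condition holds on a closed interval, whose complement provides the slack) or exploiting the remaining freedom of $\T'_r$ on $[0,\varepsilon) \cup (1-\varepsilon, 1]$ to shave the bound strictly below $\varepsilon$ for the particular $\eta'$ at hand.
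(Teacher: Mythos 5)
Your forward implication is correct and takes a genuinely different route than the paper. The paper first invokes Lemma~\ref{lem:distance} to conclude that the slicing of $\eta'$ vanishes on $[\varepsilon,1-\varepsilon]$, hence that $\T'_r$ is constant there, and then derives a contradiction from $\X'\neq\X$ by comparing Harder--Narasimhan filtrations. You instead compute $\mho_\eta$ and $\Omega_\eta$ pointwise on the three kinds of object ($\X$, $\X^\perp$, and neither) and pin down $\T'_r$ via the canonical $(\X,\X^\perp)$-short exact sequence, never leaving Definition~\ref{def:ohm-mho} and the definition of the pseudometric. Both arguments are valid; yours is more elementary and self-contained.

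Your worry about the converse is well-founded, and in fact points at a genuine defect in the statement: the paper's own proof establishes only the forward implication and says nothing about ``if''. The obstruction you foresee is real. Take $\eta'$ with $\T'_r=\A$ for $r<\varepsilon$, $\T'_r=\X$ for $r\in[\varepsilon,1-\varepsilon]$, and $\T'_r=\{0\}$ for $r>1-\varepsilon$. This is a (finite, hence weakly-Artinian and quasi-Noetherian) chain satisfying the right-hand condition, yet for nonzero $M\in\X$ one has $\mho_\eta(M)=1$ while $\mho_{\eta'}(M)=1-\varepsilon$, and similarly for $\Omega$ and for objects in $\X^\perp$; a short case check gives $d(\eta,\eta')=\varepsilon$ exactly. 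Since $B_\varepsilon(\eta)$ is the \emph{open} ball (Corollary~\ref{cor:basis}), this $\eta'$ lies on the boundary and not inside. So the ``if'' direction is false as written, and none of the repairs you sketch (appealing to Lemma~\ref{lem:distance}, or exploiting the freedom outside $[\varepsilon,1-\varepsilon]$) can rescue it without first modifying the statement, e.g.\ replacing $B_\varepsilon$ by a closed ball. You correctly identified that only the forward implication is provable.
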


\begin{proof}
As $\X$ is a torsion class in $\A$, there exist a torsionfree class $\Y$ of $\A$ such that $(\X, \Y)$ is a torsion pair in $\A$.
Then the slicing of $\eta$ is $\P_0=\Y$, $\P_1=\X$ and $\P_r=\{0\}$ if $r \in (0,1)$.

Now consider $\eta'\in B_{\varepsilon}(\eta)$.
Then, it follows from Lemma \ref{lem:distance} that $\eta'$ should be constant in the interval $[\varepsilon, 1-\varepsilon]$, that is $\T'_r= \X'$ for all $r \in [\varepsilon ,  1-\varepsilon]$, where $\X'$ is a torsion class.
As otherwise, there exists a $t$ in that interval for which there is a non-zero object $M \in\P'_t$, contradicting the assumption that $d(\eta, \eta') < \varepsilon$.

If $\X'=\X$, there is nothing to prove.
Suppose that $\X'$ is different from $\X$. 
Then we can suppose without loss of generality the existence of an object $M \in \X$ that is not in $\X'$.
Now, consider the Harder-Narasimhan filtration 
$$M'_0 \subset M'_1 \subset \dots \subset M'_n$$
of $M$ with respect to $\eta'$ given by Theorem \ref{thm:HN-filt}.
Because $M \not \in \X'$ we have that $\mho_{\eta'} (M)  \in [0,\varepsilon]$. 
However, $\mho_{\eta}(M)=\Omega_\eta(M)=1$ since $M \in \X$.
Hence, $\X'\neq \X$ implies that $\eta' \not \in B_{\varepsilon}(\eta)$.
That finishes the proof.
\end{proof}

\subsection{Construction and topological properties of the space of slicings} \label{sec:top-properties}

Any chain of torsion classes $\eta$ can be seen as an order reversing function $\eta: [0,1] \to \Tors \A$, with respect to  their natural poset structures. 
We call the image $\im \eta \subset \Tors \A$ a \textit{$n$-sequence} of $\Tors \A$ if the cardinality of $\im \eta$ is $n$ for some cardinal $0 < n  \leq \mathfrak{c}$ where is the cardinal of $\mathbb{R}$.
For a sequence $S= \im \eta$, we denote by $[n]$ the indexing set of $S$ with the total order induced by $\eta$. 
We summarise the data of an $n$-sequence $S$ as follows:
$$S: \qquad \A = \X_0 \supsetneq \X_i \supsetneq \X_j\supsetneq \X_n = {0} \text{ for every $i < j \in [n]$}.$$
Note that two chains of torsion classes $\eta, \eta'$ induce the same sequence in $\Tors \A$ if and only  there exists a continuous order-preserving bijection $f: [0,1] \to [0,1]$ such that $\P_{f(t)}= \P'_{t}$ for all $t$ in $[0,1]$. This is an equivalence relation on the set $\fT(\A)$ and we study this further in Section~\ref{sec:wall-and-chamber}.

We say an $n'$-sequence $S'$ is a \textit{subsequence} of an $n$-sequence $S$ if there exists an injective increasing function $f: [n'] \to [n] $ such that $\X'_m = \X_{f(m)}$ for all $m \in [n']$. Note if such an $f$ exists then it necessarily unique. 

For convenience, we say that an $n$-sequence, $S$, in $\Tors \A$ is \emph{well-ordered} if the natural order on the set $[n]$ is a well-order.
    
For each $n$-sequence $S$ in $\Tors\A$ we take an $n$-simplex $\Delta_S = \{ (x_1, \dots, x_n) \in \mathbb{R}^n \mid 0 \leq x_1 \leq x_2 \leq \dots \leq x_n \leq 1 \}$ where $\mathbb{R}^n$ denotes the product of $n$ copies of $\mathbb{R}$.
From each point in the interior of $\Delta_S$ we obtain a chain of torsion classes as follows: Let $\bx = (x_1, \dots, x_n) \in \Delta_S^\mathrm{o}$ for some $n$-sequence $S: \, \A = \X_0 \supsetneq \X_1 \supsetneq \dots \supsetneq \X_n = {0}.$ Then we set $h(\bx)$ to be the chain of torsion classes $(\T_i)_{i \in [0,1]}$ defined by
\[ \T_i = 
\begin{cases}
\A \text{ if $i \in [0, x_1]$}\\
\X_j \text{ if $i \in (x_j, x_{j+1}]$ for $1\leq j < n$}\\
\0 \text{ if $i \in (x_n, 1].$}
\end{cases} \]
Note that $\eta^{-}\mho_{h(\bx)} = h(\bx)$. 

The next result tells us that the weakly-Artinian and quasi-Noetherian properties of chains of torsion classes depends only on the underlying sequence of torsion classes.

\begin{proposition}
Let $S$ be an $n$-sequence in  $(\Tors \A, \subset)$. Suppose there exists a point $\bx$ in $\Delta_S^\mathrm{o}$ such that $h(\bx)$ is weakly-Artinian (resp. quasi-Noetherian), then $h(\bx')$ is also weakly-Artinian (resp. quasi-Noetherian) for all $\bx' \in \Delta_S^\mathrm{o}$.
\end{proposition}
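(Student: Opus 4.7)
The plan is to show that the weakly-Artinian and quasi-Noetherian properties of $h(\bx)$ depend only on the underlying sequence $S$, and not on the specific choice of parameters $\bx \in \Delta_S^\mathrm{o}$. I will present the argument for the weakly-Artinian case; the quasi-Noetherian case follows dually.

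First I unpack the structure. Both $h(\bx)$ and $h(\bx')$ take values in the set of torsion classes comprising $S$, and the assignment $r \mapsto h(\bx)_r$ is a monotone step function on $[0,1]$ taking each value $\X_j \in S$ on a non-degenerate subinterval. Consequently, for any $M \in \A$, the torsion subobject $t^{h(\bx)}_r M$ depends on $r$ only through the unique index $j_\bx(r) \in [n]$ with $h(\bx)_r = \X_{j_\bx(r)}$, and equals the torsion subobject of $M$ with respect to $\X_{j_\bx(r)}$; write $T_j := t^{\X_j} M$ for this subobject. The same description applies verbatim to $h(\bx')$ via the function $j_{\bx'}$.

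The central combinatorial observation is this: for any open interval $(a,b) \subset [0,1]$, the set $J(a,b,\bx) := \{j_\bx(r) : r \in (a,b)\}$ is a convex subset of the totally ordered index set $[n]$; conversely, for any non-empty convex subset $J \subset [n]$ with a least element $j_1$ and greatest element $j_2$, and any $\bx \in \Delta_S^\mathrm{o}$, there exists an open interval $(a',b') \subset [0,1]$ with $J(a',b',\bx) = J$. Such an interval is produced by taking $a' = x_{j_1}$ (or $a' = 0$ when $\X_{j_1} = \A$) and $b' = x_{j_2 + 1}$ (or $b' = 1$ when $\X_{j_2} = \{0\}$); the strict inequalities defining $\Delta_S^\mathrm{o}$ ensure that each intermediate index is indeed attained.

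The proof is then a transport argument: given $M \in \A$ and an interval $(a,b) \subset [0,1]$ at which we wish to verify the weakly-Artinian condition for $h(\bx')$, let $J := J(a,b,\bx')$ and choose $(a',b')$ with $J(a',b',\bx) = J$ as above. The weakly-Artinian hypothesis for $h(\bx)$ applied to $(a',b')$ and $M$ yields some $s \in (a',b')$ with $t^{h(\bx)}_s M \hookrightarrow t^{h(\bx)}_r M$ for all $r \in (a',b')$; setting $j^* := j_\bx(s) \in J$, this is equivalent to $T_{j^*} \hookrightarrow T_j$ for every $j \in J$. Picking any $s' \in (a,b)$ with $j_{\bx'}(s') = j^*$, these same inclusions become $t^{h(\bx')}_{s'} M \hookrightarrow t^{h(\bx')}_r M$ for all $r \in (a,b)$, verifying the weakly-Artinian condition for $h(\bx')$. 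The mild subtlety lies in handling the boundary cases in the convex-subset construction; the advantage of this combinatorial approach over constructing a ``rescaling homeomorphism'' $[0,1] \to [0,1]$ relating the jump points of $\bx$ and $\bx'$ is that it avoids questions of continuity at possible limit points of $[n]$.
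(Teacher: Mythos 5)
Your argument is correct and matches the paper's approach: both proofs rest on the observation that each torsion subobject $t_r M$ depends only on the step index (i.e.\ which member of $S$ is attained at $r$), so the weakly-Artinian and quasi-Noetherian conditions depend only on the underlying sequence and not on the particular $\bx \in \Delta_S^{\mathrm{o}}$. The paper packages this by invoking a single order-preserving injection $\psi : [0,1] \to [0,1]$ with $\psi(x_i) = x'_i$ and asserting that the stabilisation of the chains of torsion subobjects transfers under $\psi$; your convex-subset transport argument is a more explicit unpacking of the same reindexing, and the boundary subtlety you flag (convex $J \subseteq [n]$ lacking a least or greatest element when $[n]$ is not well-ordered) is present, though unaddressed, in the paper's version as well.
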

\begin{proof}
Let  $\bx = (x_1, \dots, x_n)$ and $\bx' = (x'_1, \dots, x'_n) $ be points in $\Delta_S^\mathrm{o}$. Observe that there exists an order preserving injection $\psi : [0,1] \to [0,1]$ such that $\psi (x_i) = x'_i$ for all $i \in [n]$.

Let $(a,b) \subset [0,1]$ be an interval. 
Then the sequence 
\[ M \supseteq t_a M \supseteq \dots \supseteq t_b M\]
stabilises below (resp. above) if and only if the sequence
\[ M \supseteq t'_{\psi(a)} M \supseteq \dots \supseteq t'_{\psi (b)} M \]
also stabilises below (resp. above). 
As consequence $h(\bx)$ is weakly-Artinian (resp. quasy-Noetherian) if and only if so is $h(\bx')$.
\end{proof}

We may now describe a topological space $\mathfrak{M}(\A)$: For each weakly-Artinian and quasi-Noetherian $n$-sequence, $S$, in $\Tors\A$ we take an $n$-simplex $\Delta_S$ as before.
If $S'$ is a $n'$-subsequence of $S$ we identify $\Delta_{S'}$ with subset of $\Delta_S$ given by
\[ \{ (x_1, \dots, x_n) \in \Delta_S \mid x_j = \inf\{x_{f(i)} \mid f(i)> j\}\}\] 
where $f: [n'] \to [n]$ is the unique injective increasing function described above. 
Observe that if $S'$ is well ordered then $\Delta_{S'}$ is identified with 
the face of $\Delta_S$ given by 
\[ \{ (x_1, \dots, x_n) \in \Delta_S \mid  f(i-1) < j \leq f(i) \text{ in } [n] \Rightarrow x_j = x_{f(i)} \} \] 
Note that clearly, if $S$ is  weakly-Artinian and quasi-Noetherian then any subseqence of $S$ also has these properties.

\begin{remark}\label{rem:slices-space-properties}
We make some observations. 
\begin{enumerate}
\item If all sequences in $\Tors \A$ are well-ordered (for instance, if $\Tors \A$ is finite), then $\mathfrak{M}(\A)$ may also be constructed as the classifying space or nerve of $\Tors \A$.
\item $\mathfrak{M}(\A)$ inherits the structure of a (psuedo)metric space from the psuedometric of $\mathfrak{T}(\A)$ that was discussed in Section~\ref{sec:spacetor} if we define $d(\bx, \bx') := d(h(\bx), h(\bx'))$. 
\item For $\bx, \bx' \in \Delta_S$ we have that $d(\bx, \bx') = \sup \{ |x_i - x_i'| \mid i \in [n] \} $. In particular, if $n$ is well-ordered, the metric on $\Delta_S$ coincides with the Chebyshev distance. That is, for $\bx, \bx' \in \Delta_S$,  
$d(\bx, \bx') = \max\{ |x_i - x_i'| \mid i \in [n] \}$. 
\item The poset $\Tors \A$ is finite if and only if there are finitely many weakly-Artinian and quasi-Noetherian sequences in $\Tors \A$.
\end{enumerate}

\end{remark}

From our construction and Proposition~\ref{prop: slice(A)  cong T(A)} we obtain the following:

\begin{proposition}
The space $\mathfrak{M}(\A)$ is isomorphic to  $\fT(\A) / \sim$ and hence also to $\Slice (\A) $.
\end{proposition}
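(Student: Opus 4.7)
The plan is to show that the assignment $\mathbf{x} \mapsto h(\mathbf{x})$ descends to a bijection $\Phi \colon \mathfrak{M}(\A) \to \fT(\A)/{\sim}$, and that this bijection is automatically an isometry with respect to the pseudometric structures already in play; the identification with $\Slice(\A)$ then follows immediately from Proposition~\ref{prop: slice(A)  cong T(A)}. First I would verify that $h$ is compatible with the face identifications of $\mathfrak{M}(\A)$. If $\mathbf{x} = (x_1,\dots,x_n) \in \Delta_S$ lies on the face identified with $\Delta_{S'}$ via the injection $f \colon [n'] \to [n]$, the imposed condition $x_j = \inf\{x_{f(i)} : f(i) > j\}$ for each omitted index $j$ forces the interval $(x_j, x_{j+1}]$ to be empty, so the torsion class $\X_j$ contributes nothing to the formula for $h(\mathbf{x})$. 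Thus the chain read off in $\Delta_S$ agrees on the nose with the chain read off in $\Delta_{S'}$, and $h$ descends to a well-defined map $\Phi$.

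For surjectivity, given $\eta = (\T_t)_{t\in[0,1]} \in \fT(\A)$, let $S = \im \eta$, an $n$-sequence with $n \leq \mathfrak{c}$ that inherits the weakly-Artinian and quasi-Noetherian properties from $\eta$. Monotonicity of $\eta$ implies that the preimages $\eta^{-1}(\X_i)$ are pairwise adjacent intervals partitioning $[0,1]$; letting $x_i$ be the common endpoint of $\eta^{-1}(\X_{i-1})$ and $\eta^{-1}(\X_i)$ yields a point $\mathbf{x} \in \Delta_S$. The chains $\eta$ and $h(\mathbf{x})$ can differ at most at the $x_i$, so they share a common slicing and $[h(\mathbf{x})] = [\eta]$ in $\fT(\A)/{\sim}$.

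For injectivity, suppose $\mathbf{x} \in \Delta_S^{\mathrm{o}}$ and $\mathbf{y} \in \Delta_T^{\mathrm{o}}$ satisfy $h(\mathbf{x}) \sim h(\mathbf{y})$ with common slicing $\cP$. A direct computation using Definition~\ref{def:chain} applied to $h(\mathbf{x})$ shows that $\cP_r$ vanishes outside $\{x_1,\dots,x_n\}$ and satisfies $\cP_{x_i} = \X_{i-1} \cap \X_i^\perp$; non-vanishing of the latter is seen by splitting any $M \in \X_{i-1} \setminus \X_i$ through the torsion pair $(\X_i, \X_i^\perp)$. Hence the slicing determines both the breakpoints and, via the identity $\X_j = \Filt(\bigcup_{i > j} \cP_{x_i})$ (obtained by reading off the Harder--Narasimhan filtrations of Theorem~\ref{thm:HN-filt}), the torsion classes $\X_j$ themselves. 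Thus $S = T$ and $\mathbf{x} = \mathbf{y}$ in $\mathfrak{M}(\A)$.

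The isometry property of $\Phi$ is immediate from Remark~\ref{rem:slices-space-properties}(2): the pseudometric on $\mathfrak{M}(\A)$ was defined by pulling back along $h$, and by Theorem~\ref{prop:sameslicing} together with Proposition~\ref{prop:metric} the zero-distance classes are exactly the $\sim$-classes. The main obstacle I anticipate lies in the surjectivity step when $\im \eta$ has large cardinality: one must carefully verify that the transition points $x_i$ are unambiguously defined, that $\mathbf{x}$ genuinely lands in $\Delta_S$, and that the weakly-Artinian and quasi-Noetherian conditions of $\eta$ survive the passage to $S$ so that $\Delta_S$ really appears in the construction of $\mathfrak{M}(\A)$.
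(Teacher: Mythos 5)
The paper offers no argument for this proposition beyond the remark that it follows ``from our construction and Proposition~\ref{prop: slice(A)  cong T(A)}''. Your blind proof therefore goes considerably further than the paper itself and, as a strategy, it is the right one: show that $h$ descends to a well-defined bijection $\Phi\colon\mathfrak{M}(\A)\to\fT(\A)/{\sim}$ and invoke Proposition~\ref{prop: slice(A)  cong T(A)}. Your verification of compatibility with face identifications and your observation that $\eta^-\mho_{h(\bx)}=h(\bx)$ give the well-definedness cleanly, and the argument is essentially complete when the sequences involved are well-ordered (in particular for finite and $\aleph_0$-indexed chains, which covers all of the paper's motivating examples).

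There are, however, two places where the argument as written only covers the well-ordered case, and you flag only one of them. The injectivity step relies on the identities $\cP_{x_i}=\X_{i-1}\cap\X_i^\perp$ and $\X_j=\Filt\bigl(\bigcup_{i>j}\cP_{x_i}\bigr)$; the former requires each $i\in[n]$ to possess a predecessor, and the paper explicitly records it only under a well-orderedness hypothesis (see the remark immediately before Lemma~\ref{lem:subsequences}). For a sequence $S$ whose order type contains a densely ordered or uncountable piece, neither formula applies verbatim, and in fact the pointwise formula defining $h(\bx)$ -- which invokes intervals of the form $(x_j,x_{j+1}]$ -- itself presupposes that successors exist, so even the construction of $\mathfrak{M}(\A)$ is implicitly restricted to sequences with this property. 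The second gap is the one you identify yourself: in the surjectivity step, when $\im\eta$ is large the set of breakpoints may be dense, so the assertion that two chains agreeing off the breakpoints share a slicing needs to be argued (it does hold, because $\T_s$ at a breakpoint $s$ is sandwiched between $\bigcap_{r<s}\T_r$ and $\bigcup_{r>s}\T_r$, and the slicing at $t$ depends only on these one-sided envelopes, but this needs to be said). A related point you should also address is that the transfer of the weakly-Artinian and quasi-Noetherian conditions from $\eta$ to $h(\bx)$ is not literally covered by the paper's proposition about varying $\bx$ within a fixed $\Delta_S^{\mathrm{o}}$, since $\eta$ and $h(\bx)$ are distinct chains with the same image; the same sandwiching remark suffices here too, but it deserves an explicit sentence rather than the bare assertion that the property ``is inherited''.
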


We now study some topological properties of $\Slice (\A)$ and $\fT(\A)$. We begin by recalling that two points $p$ and $q$ in a topological space $X$ are said to be \textit{path connected} if there exists a continuous function $f: [0,1] \to X$ such that $f(0)= p$ and $f(1)=q$.
We claim that $\Slice (\A)$ is path connected. 
Indeed, every simplex in the space has a face corresponding to the sequence $\A \supset 0$. 
Moreover, since the psuedometric and hence the topology of $\fT(\A)$ only depends on the slicings of points, the path connectedness of $\Slice (\A)$ implies the path connectedness of $\fT(\A)$. 

The following observation will be useful in the sequel.

\begin{remark} 
Let $S = \{\X_i\}_{i \in [n]}$ be an $n$-sequence in $\Tors \A$. 
Then, for all $\bx, \bx' \in  \Delta^{\mathrm{o}}_S$, we have that $\cP^{h(\bx)}_{x_i} = \cP^{h(\bx')}_{x_i'}$ for all $i \in [n]$. 
In the sequel, we set  $\cP^S_i := \cP^{h(\bx)}_{x_i}$ for $\bx \in \Delta^{\mathrm{o}}_S$. 
We also note that, if $S$ is well-ordered then  $\cP_i^S  = \X_{i-1} \cap \X^\perp_i$ for $i >1$ and $\cP_1^S = \X^\perp_1$.
\end{remark}

We have a useful alternative characterisation of subsequences of well-ordered sequences in $\Tors \A$. 

\begin{lemma} \label{lem:subsequences} 
Let $S$ (resp. $S'$) be an $n$-sequence (resp. $n'$-sequence) in $\Tors \A$ and suppose that $S$ is well-ordered. 
Then $S'$ is a subsequence of $S$ if and only if there exists an order preserving surjective function $g: [n] \to [n']$ such that $\cP^S_\alpha \subseteq \cP^{S'}_{g(\alpha)}$ for all $\alpha \in [n]$.  
\end{lemma}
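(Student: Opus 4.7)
The strategy rests on the preliminary identification
\[ \X_i = \Filt\Big(\bigcup_{k > i} \cP^S_k\Big), \]
valid for any well-ordered sequence $S$ in $\Tors \A$ and any index $i$. I would prove this by descending induction on $i$, using the canonical short exact sequence with respect to the torsion pair $(\X_{i+1}, \X_{i+1}^\perp)$ together with the closure of torsion classes under quotients to write any $M \in \X_i$ as an extension of $fM \in \X_i \cap \X_{i+1}^\perp = \cP^S_{i+1}$ by $tM \in \X_{i+1}$.

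For the forward direction, given an injective increasing $f \colon [n'] \to [n]$ with $\X'_m = \X_{f(m)}$, I would define $g(\alpha) := \min \{m \in [n'] \mid f(m) \geq \alpha\}$, using $f(n') = n$ to ensure the set is non-empty. The map $g$ is order preserving by construction, and surjective because $g(f(m)) = m$ (since $f$ is strictly increasing). The inclusion $\cP^S_\alpha \subseteq \cP^{S'}_{g(\alpha)}$ then reduces to the bracketing $f(g(\alpha) - 1) < \alpha \leq f(g(\alpha))$ together with the formulas $\cP^S_\alpha = \X_{\alpha - 1} \cap \X_\alpha^\perp$ and $\cP^{S'}_m = \X_{f(m-1)} \cap \X_{f(m)}^\perp$ (with the conventions $f(0) = 0$, $\X_0 = \A$).

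For the reverse direction, I would define $f(m) := \max g^{-1}(m)$. Surjectivity and order preservation of $g$ imply that the fibres $g^{-1}(m)$ partition $[n]$ into consecutive non-empty intervals arranged in increasing order; hence $f$ is well-defined, injective, and strictly increasing, with the key property $\beta > f(m) \iff g(\beta) > m$. To verify $\X'_m = \X_{f(m)}$, I would invoke the identification above. The inclusion $\X_{f(m)} \subseteq \X'_m$ is direct: if $\beta > f(m)$ then $\cP^S_\beta \subseteq \cP^{S'}_{g(\beta)} \subseteq \bigcup_{k > m} \cP^{S'}_k$. For the reverse inclusion, let $M \in \X'_m$ and consider its Harder--Narasimhan filtration with respect to $S$, with slices $\cP^S_{\beta_1}, \ldots, \cP^S_{\beta_r}$ where $\beta_1 > \cdots > \beta_r$. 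Grouping consecutive slices with a common $g(\beta_i)$-value is legitimate because each $\cP^{S'}_k$ is closed under extensions (being the intersection of a torsion and a torsionfree class), and yields a filtration of $M$ whose successive quotients lie in $\cP^{S'}_{k_1}, \ldots, \cP^{S'}_{k_s}$ for strictly decreasing $k_j$. By the uniqueness clause of Theorem~\ref{thm:HN-filt} applied to $S'$, this must coincide with the HN filtration of $M$ with respect to $S'$; since $M \in \X'_m$, all $k_j$ satisfy $k_j > m$, whence $g(\beta_r) > m$, hence $\beta_r > f(m)$, and therefore $M \in \X_{f(m)}$.

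The principal obstacle is this last inclusion in the reverse direction: the regrouping of HN slices genuinely invokes uniqueness of HN filtrations, and one must be careful that consecutive HN slices of $S$ only map under $g$ to weakly (not strictly) decreasing values, hence the need for the grouping step before invoking uniqueness. A secondary technical point is that when $[n]$ is infinite, $\max g^{-1}(m)$ need not exist as an element of $[n]$, so the construction would have to be rephrased via suprema with limit ordinal cases handled separately; this should not affect the finite settings that drive the paper's main applications.
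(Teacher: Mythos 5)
Your proof follows the same overall strategy as the paper's: both rest on the identity $\X_i = \Filt\bigl(\bigcup_{k>i}\cP^S_k\bigr)$, both construct $g$ from $f$ (and $f$ from $g$) via the same minimal/maximal-element choices, both derive $\cP^S_\alpha \subseteq \cP^{S'}_{g(\alpha)}$ from the bracketing $f(g(\alpha)-1) < \alpha \leq f(g(\alpha))$, and both prove $\X'_\beta \subseteq \X_{f(\beta)}$ by inspecting the Harder--Narasimhan filtration of an $M\in\X'_\beta$ with respect to $S$ and invoking uniqueness. The one place you diverge is in that last step: the paper is more economical in that it extracts only the final Harder--Narasimhan quotient $N$ of $M$ (which, being a quotient, still lies in $\X'_\beta$, but by hypothesis lies in $\cP^{S'}_{g(i)}$ with $g(i)\leq\beta$, an immediate contradiction with uniqueness), whereas you regroup the entire filtration into an $S'$-filtration via closure of each heart $\cP^{S'}_k$ under extensions and then invoke uniqueness on the whole thing. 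Both are correct; your version makes the reconstruction of the $S'$-filtration explicit at the cost of a little extra bookkeeping, and your closing caveat about $\max g^{-1}(m)$ failing to exist for infinite well-orders is a real issue that the paper's own construction of $f$ shares and does not resolve.
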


\begin{proof}
First suppose that $S' = \{\X'_i\}_{i \in [n']}$ is a subsequence of $S = \{\X_i\}_{i \in [n]}$. 
Then, by definition, there exists an injective order preserving fuction $f: [n'] \to [n]$ such that $\X'_\beta = \X_{f(\beta)}$ for all $\beta \in [n']$ furthermore as $[n]$ is well-ordered so is $[n']$. 
Then it follows that for all $\alpha \in [n]$ there exists a unique $\beta \in [n']$ such that $\alpha \leq f(\beta)$ and $\alpha > f(\beta ')$ for all $\beta > \beta' \in [n']$. We set $g(\alpha) = \beta$ and claim that this assignment satisfies the conditions of the statement. 
Indeed, $g$ is clearly order preserving and as $gf(\beta) = \beta$ for all $\beta \in [n']$ we deduce that $g$ is surjective. Finally, it remains to show that $\cP_\alpha^S \subseteq \cP^{S'}_{g(\alpha)} $.
By the definition of $g(\alpha)$ we have that $\X_\alpha \subseteq \X_{fg(\alpha)} = \X_{g(\alpha)}'$, thus $\bigcap_{r<\alpha} \X \subseteq \bigcap_{r<g(\alpha)}\X'$ and $\bigcup_{r> \alpha} \X \subseteq \bigcup_{r>g(\alpha)} \X'$ and we are done by Definition~\ref{def:slicing}. 

For the converse, let $g$ be such a surjective order preserving function. 
Since $[n]$ is well-ordered, it then follows that for all $\beta \in [n']$ that there exists unique $\alpha \in [n]$ such that $g(\alpha) = \beta$ and $g(\alpha') > \beta$ for all $\alpha < \alpha' \in [n]$. By setting $f(\beta) = \alpha$ we define an order preserving injective function. 

It remains to show that $\X'_\beta = \X_{f(\beta)}$ for all $\beta \in [n']$. 
By assumption,  for all $f(\beta) < i \in [n]$ we have that $\cP_i^S \subseteq \cP^{S'}_{g(i)}$ and $\beta < g(i)$. 
Thus 
\[\X_{f(\beta)} = \Filt \left( \bigcup_{f(\beta) < i \leq n} \cP^{S}_j \right) \subset \Filt \left( \bigcup_{\beta < j \leq n'} \cP^{S'}_j \right)  = \X'_\beta. \]
For the reverse inclusion, suppose that there exists $M \in \X'_\beta \setminus \X_{f(\beta)}$.  
Then $\mho_S(M) = \sup \{ i \in [n] \mid M \in \X_i\} \leq f(\beta)$. 
In particular, there exists a quotient $N$ of $M$ such that $N \in \cP^S_i$ for some $i \leq f(\beta)$. 
By Theorem~\ref{thm:dicksontorsion}.2, $N \in \X'_\beta$. 
We deduce that $g(i) \geq \beta$ which is a contradiction. 
\end{proof}

\begin{remark}
Let $S$ be a well-ordered $n$-sequence and take $S'$ a $n'$-subsequence of $S$. 
Let $g: [n] \to [n']$ be the order preserving surjection as in the above lemma.
Then it follows from the proof of the that lemma that 
$$\P'_\beta = \Filt \left( \bigcup_{g(\alpha)=\beta}  \P_\alpha\right) $$
for all $\alpha \in [n]$ and $\beta \in [n']$.
\end{remark}

We now show that every well-ordered sequence in $\Tors \A$ determines a closed set of $\Slice \A$. 

\begin{proposition} \label{prop: Delta_S closed}
Let $S= \{\X_i\}_{i \in [n]} $ be a well-ordered $n$-sequence in $\Tors \A$. 
Then $\Delta_S$ is a closed set of $\Slice \A  $. 
\end{proposition}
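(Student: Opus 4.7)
My plan is to show that $\Delta_S$ is complete as a subspace of $\Slice \A$, since in a metric space every complete subspace is closed. First I note that $\Slice \A$ is a genuine metric space: by Proposition~\ref{prop:metric} and Proposition~\ref{prop: slice(A)  cong T(A)}, the pseudometric on $\fT(\A)$ descends to a metric on $\Slice \A = \fT(\A)/\sim$, because $d(\eta, \eta') = 0$ characterises having equal slicing (Lemma~\ref{lem:distance}).

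The first key ingredient is Remark~\ref{rem:slices-space-properties}(3), which identifies the metric inherited by $\Delta_S$ from $\Slice \A$ with the Chebyshev metric $d(\bx, \bx') = \sup_{i \in [n]} |x_i - x'_i|$. Given a Cauchy sequence $(\bx_k)_{k \in \mathbb{N}}$ in $\Delta_S$ under this metric, each coordinate sequence $(x_{k,i})_{k \in \mathbb{N}}$ is Cauchy in $[0,1]$, hence converges to some $x_{\infty,i} \in [0,1]$. I would then take $\bx_\infty := (x_{\infty,i})_{i \in [n]}$ as the candidate limit; it lies in $\Delta_S$ because the monotonicity constraints $x_{k,i} \leq x_{k,j}$ for $i \leq j$ in $[n]$ pass to the limit.

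To verify $\bx_k \to \bx_\infty$ in the sup metric, I fix $\varepsilon > 0$ and $K$ with $d(\bx_k, \bx_l) < \varepsilon$ for all $k, l \geq K$; letting $l \to \infty$ in $|x_{k,i} - x_{l,i}| < \varepsilon$ gives $|x_{k,i} - x_{\infty,i}| \leq \varepsilon$ uniformly in $i$, hence $d(\bx_k, \bx_\infty) \leq \varepsilon$. The associated chain $h(\bx_\infty)$ has underlying sequence a subsequence of $S$ (obtained by collapsing indices where $x_{\infty,j} = x_{\infty,j+1}$), and since subsequences of weakly-Artinian and quasi-Noetherian sequences inherit these properties (as noted just before the statement), $h(\bx_\infty) \in \fT(\A)$ is a valid chain and gives a point of $\Delta_S \subset \Slice \A$.

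The main subtlety is the case of infinite well-ordered $n$: here $\Delta_S$ need not be compact, so the quick argument of \emph{compact-plus-Hausdorff implies closed} is unavailable (unlike the finite-dimensional case). The completeness approach handles all cardinalities of $n$ uniformly, with the crux being the identification of the subspace metric with the sup metric via Remark~\ref{rem:slices-space-properties}(3), so that $\Delta_S$ is realised as a closed subset of the complete metric space $\bigl([0,1]^{[n]}, d_\infty\bigr)$.
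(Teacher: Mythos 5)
Your completeness argument is correct, and it takes a genuinely different route from the paper. The paper proves closedness directly: given an accumulation point $\by$ of $\Delta_S^{\mathrm{o}}$ lying in $\Delta_{S'}^{\mathrm{o}}$, it constructs an order-preserving surjection $g\colon [n] \to [n']$ with $\cP^S_\alpha \subseteq \cP^{S'}_{g(\alpha)}$, ruling out order-violations via Lemma~\ref{lem:distance} and obtaining surjectivity from the uniqueness of Harder--Narasimhan filtrations (Theorem~\ref{thm:HN-filt}), and then invokes the characterisation of subsequences in Lemma~\ref{lem:subsequences} to conclude $\by \in \Delta_{S'} \subseteq \Delta_S$. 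Your argument sidesteps all of this subcategory bookkeeping by taking Remark~\ref{rem:slices-space-properties}(3) as input, reducing everything to the elementary completeness of $([0,1]^{[n]}, d_\infty)$ and the fact that a complete subspace of a metric space is closed. This is cleaner and handles all cardinalities of $[n]$ uniformly; the trade-off is that you lean on Remark~\ref{rem:slices-space-properties}(3), which the paper states without proof and whose justification would itself require the kind of ``slice-independence via HN uniqueness'' reasoning that the paper's proof carries out explicitly. So the paper's route is more self-contained, while yours is more transparent once the Chebyshev identification is granted. Two small points worth flagging: the phrasing ``collapsing indices where $x_{\infty,j} = x_{\infty,j+1}$'' should be replaced, for infinite well-orders, by collapsing the fibres of the equivalence relation $i \sim j \Leftrightarrow x_{\infty,i} = x_{\infty,j}$ (limit ordinals have no immediate predecessor); and it is worth saying explicitly that the resulting collapsed sequence is well-ordered (being a subset of the well-order $[n]$) so that the identification with a face in the paper's gluing construction applies.
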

\begin{proof}
We show that $\Delta_S$ contains all of its accumulation points. 
To do this, it is enough to show that all accumulation points of $\Delta^{\mathrm{o}}_S$ lie in $\Delta_S$. 
To this end, let $\by$ be such an accumulation point, so that there exists a sequence  $(\bx_i)_{i \in \mathbb{N}} \in  \Delta^{\mathrm{o}}_S$ such that $d(\by, \bx_i) < 1/i $. 
We may assume that $\by \in \Delta^{\mathrm{o}}_{S'}$ for some $n'$-sequence $S' = \{\X'_i\}_{i \in [n']}$.
We will show that $S'$ must be a subsequence of $S$ using the characterisation given in Lemma~\ref{lem:subsequences}. 

To this end, we first show that for all $\alpha \in [n]$, there exists $g(\alpha) \in [n']$ such that $\cP_\alpha^S \subseteq \cP_{g(\alpha)}^{S'}$. 
Suppose that this is not the case for $\alpha \in [n]$. 
Then we consider the minimal interval $J \subseteq [n']$ such that $\cP_\alpha^{S} \subseteq \cP^{S'}_J$. 
By assumption $|J| \geq 2$, we take distinct $j, j' \in J$. Then by Lemma~\ref{lem:distance}, we have that $d(\by, \bx) \geq \frac{|y_j - y_{j'}|}{2}$ for all $\bx \in \Delta^{\mathrm{o}}_S$ which is clearly a contradiction to $\by$ being an accumulation point of $\Delta^{\mathrm{o}}_S$.

We must now show that the assignment $\alpha \mapsto g(\alpha)$ is order preserving and surjective, whence we will be finished by Lemma~\ref{lem:subsequences}. 
Suppose first that $g$ is not order preserving, then there exists $\alpha <\beta \in [n]$ such that $g(\alpha) > g(\beta)$. 
Then, by Lemma~\ref{lem:distance}, we have for all $\bx \in \Delta^{\mathrm{o}}_S$ that 
\[d(\by, \bx) \geq \max \{ |y_{g(\alpha)} -x_\alpha |, |y_{g(\beta')} - x_\beta | \} \geq \frac{(y_{g(\alpha)} - y_{g(\beta)})}{2}.\] 
Since this distance is bounded below by a constant, we conclude that $\by$ cannot be an accumulation point of $\Delta_S$. 

Finally, suppose that $g$ is not surjective and let $\beta \in [n']$ be not in the image of $g$. 
Let $X \in \cP^{S'}_\beta$ be non-zero, then the Harder-Narasimhan filtration of $X$ with respect to $h(\by)$ is trivial, that is, it is of the form $0 \subset X$. 
Let $\bx \in \Delta^{\mathrm{o}}_S$, then if $X$ is $h(\bx)$-quasisemistable, that is $X \in \cP^S_\alpha$ for some $\alpha$ we have a contradiction to the assumption that $\cP^S_\alpha \subseteq \cP^{S'}_{g(\alpha)}$. 
If $X$ is not $h(\bx)$-quasisemistable then its Harder-Narasimhan filtration with respect to $h(\bx)$ is non-trivial. 
By construction of $g$ we have that every $h(\bx)$-quasisemistable object is $h(\by)$-quasisemistable, moreover, as $g$ is order preserving we have that the Harder-Narasimhan filtration of $X$ with respect to $h(\bx)$ is also a Harder-Narsimhan filtration of $X$ with respect to $h(\by)$. 
This contradicts the uniquess of Harder-Narasimhan filtrations as shown in Theorem~\ref{thm:HN-filt}. 
\end{proof}

\subsection{Abelian categories with finitely many torsion classes}
In this subsection we study in more detail the case when $\Tors \A$ is a finite. 

\begin{proposition} \label{prop: finite-tors-weak-topology}
Suppose that $\Tors \A$ is a finite set. Then a set $V \subseteq \Slice \A$ is closed if and only if $V \cap \Delta_S$ is closed in $\Delta_S$ for all sequences $S$ in $\Tors \A$.
\end{proposition}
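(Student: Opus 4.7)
The plan is to exploit the fact that when $\Tors \A$ is finite, the space $\Slice \A$ is a \emph{finite} union of the closed simplices $\Delta_S$, so a set is closed iff it meets each $\Delta_S$ in a closed subset. The proof splits naturally into the two implications and, in both directions, is essentially a point-set topology argument once the right facts about the $\Delta_S$ are assembled. The main work has, in effect, already been done in Proposition~\ref{prop: Delta_S closed}; what remains is just bookkeeping.

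For the forward implication $(\Rightarrow)$, if $V$ is closed in $\Slice \A$, then for any sequence $S$ the set $V \cap \Delta_S$ is closed in the subspace $\Delta_S$ by definition of the subspace topology. This direction requires no hypothesis on the cardinality of $\Tors \A$.

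For the reverse implication $(\Leftarrow)$, I would first observe that by Remark~\ref{rem:slices-space-properties}(4) there are only finitely many sequences $S$ in $\Tors \A$, and since each such sequence is finite, it is automatically well-ordered. This makes Proposition~\ref{prop: Delta_S closed} applicable to every $S$, giving that each $\Delta_S$ is closed in $\Slice \A$. Next I would use the standard transitivity for closed subspaces: if $A \subseteq B \subseteq X$ with $B$ closed in $X$, then $A$ closed in $B$ implies $A$ closed in $X$ (since $A = B \cap A'$ for some $A'$ closed in $X$, and an intersection of two closed sets is closed). Applying this with $A = V \cap \Delta_S$ and $B = \Delta_S$, the hypothesis yields that each $V \cap \Delta_S$ is closed in $\Slice \A$.

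Finally, since by construction $\Slice \A = \bigcup_S \Delta_S$, we have $V = \bigcup_S (V \cap \Delta_S)$, which is a \emph{finite} union of closed sets of $\Slice \A$ and therefore closed. The only place where the finiteness of $\Tors \A$ is genuinely used is to guarantee that this last union is finite (otherwise an arbitrary union of closed sets need not be closed), so the argument correctly identifies why the equivalence can fail in general. No real obstacle is anticipated: the statement is essentially a finite CW-style weak topology assertion, and both the closedness of each $\Delta_S$ and the finiteness of the indexing set have been recorded earlier in the paper.
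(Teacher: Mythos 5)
Your proof is correct and follows essentially the same route as the paper: both directions rest on the facts that finiteness of $\Tors\A$ yields finitely many (necessarily finite, hence well-ordered) sequences, that each $\Delta_S$ is then closed in $\Slice\A$ by Proposition~\ref{prop: Delta_S closed}, and that $V=\bigcup_S(V\cap\Delta_S)$ is a finite union. You are a bit more explicit than the paper about the subspace-topology bookkeeping (that closed-in-$\Delta_S$ together with $\Delta_S$ closed in $\Slice\A$ gives closed-in-$\Slice\A$, and that the forward implication needs no finiteness), which the paper leaves implicit; this is a helpful clarification rather than a different argument.
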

\begin{proof}
Since $\Tors \A$ is finite, we may write any subset, $V$, of $\Slice \A$ as a finite union $V = \bigcup_S (V \cap \Delta_S)$. 
Thus if $V \cap \Delta_S$ is closed for all $S$ then so is $V$. Conversely, as $\Tors \A$ is finite every $n$-sequence is well-ordered thus $\Delta_S$ is closed in $\Slice \A$ for all $S$ by Proposition~\ref{prop: Delta_S closed}. 
Now, clearly if $V$ is closed then so is $V \cap \Delta_S$.  
\end{proof}

\begin{corollary} \label{cor: finite-tors-then-finite CW}
Suppose that $\Tors \A$ is finite. Then $\Slice \A$ is homeomorphic to a finite CW complex. 
\end{corollary}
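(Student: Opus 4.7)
The strategy is to verify the axioms of a finite CW complex directly from the explicit construction of $\mathfrak{M}(\A) \cong \Slice \A$ given earlier in this section. My plan is to declare, for each weakly-Artinian and quasi-Noetherian $n$-sequence $S$ in $\Tors \A$, the open simplex $\Delta^{\mathrm{o}}_S$ to be an open $n$-cell, with characteristic map given by the inclusion of the standard $n$-simplex $\Delta_S \hookrightarrow \Slice \A$.

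First I would handle the finiteness bookkeeping: by Remark~\ref{rem:slices-space-properties}(4), finiteness of $\Tors \A$ yields only finitely many such sequences $S$, hence only finitely many candidate cells. Moreover, since $\Tors \A$ is finite, every such $S$ is automatically well-ordered, so the face identifications described in the excerpt collapse to the standard simplicial ones (matching up a face of $\Delta_S$ with $\Delta_{S'}$ whenever $S'$ is a subsequence of $S$). Second, each $\Delta_S$ is closed in $\Slice \A$ by Proposition~\ref{prop: Delta_S closed}. Third, each point of $\Slice \A$ corresponds via Proposition~\ref{prop: slice(A)  cong T(A)} to an equivalence class of chains of torsion classes whose image in $\Tors \A$ is a unique sequence $S$, so every point lies in exactly one $\Delta^{\mathrm{o}}_S$; thus the open cells partition $\Slice \A$. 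Fourth, by construction the topological boundary $\Delta_S \setminus \Delta^{\mathrm{o}}_S$ decomposes as a union of face simplices $\Delta_{S'}$ for proper subsequences $S' \subset S$, each of strictly smaller dimension, which furnishes the attaching-map axiom and places the boundary in the $(n-1)$-skeleton. Finally, the weak topology condition (a set is closed iff its intersection with every closed cell is closed) is the content of Proposition~\ref{prop: finite-tors-weak-topology}.

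The main subtlety I anticipate is checking that the topology that $\Slice \A$ induces on each $\Delta_S$ is really the Euclidean topology of the standard $n$-simplex, so that the characteristic maps are genuine homeomorphisms onto their images. This is where Remark~\ref{rem:slices-space-properties}(3) does the work: for well-ordered $S$ the restricted pseudometric on $\Delta_S$ coincides with the Chebyshev distance on $[0,1]^n$, which induces the Euclidean topology on the simplex. Combined with the preceding steps, all CW axioms are then verified and the finitely many cells exhibit $\Slice \A$ as a finite CW complex.
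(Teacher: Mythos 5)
Your proof is correct and follows essentially the same route as the paper's. The paper's proof is terser, simply observing that the construction of $\mathfrak{M}(\A)$ yields a finite simplicial set and citing Proposition~\ref{prop: finite-tors-weak-topology} for the weak topology; your proposal is a faithful unpacking of exactly what that means, explicitly verifying the CW axioms (finitely many well-ordered cells, disjoint open cells, closed cells by Proposition~\ref{prop: Delta_S closed}, attaching along subsequence faces, weak topology by Proposition~\ref{prop: finite-tors-weak-topology}, and Euclidean topology on each $\Delta_S$ via the Chebyshev metric of Remark~\ref{rem:slices-space-properties}.3 --- the last point being exactly what the paper addresses in the remark immediately following the corollary).
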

\begin{proof}
One may clearly interpret the construction of $\mathfrak{M}(\A)$ as a simplicial set which is finite if $\Tors \A$ is. 
By Proposition~\ref{prop: finite-tors-weak-topology} if $\Tors \A$ is finite, then the topology of $\Slice \A$ coincides with the standard weak topology of a CW complex. 
\end{proof}

\begin{remark}
It is inconsequential that the metric in each $\Delta_S$ is not the standard Euclidean metric. 
We recall that all norms on a finite dimensional vector space are equivalent and so are their induced metrics. 
Moreover, equivalent metrics induce equivalent topologies.
As we noted in Remark~\ref{rem:slices-space-properties}, the metric within each $\Delta_S$ is the Chebyshev distance, which is induced by the uniform norm and is thus equivalent to the usual Euclidean metric which is induced by the Euclidean norm. 
\end{remark}

From the above we give a characterisation of abelian categories with finitely many torsion classes.

\begin{theorem}\label{thm:compact}
The space $\Slice \A$ is compact if and only if 
$\Tors \A$ is a finite set. 
\end{theorem}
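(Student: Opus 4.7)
The plan is to prove the two implications separately. The $(\Leftarrow)$ direction is immediate: if $\Tors \A$ is finite, then Corollary~\ref{cor: finite-tors-then-finite CW} identifies $\Slice \A$ with a finite CW complex, which is compact.

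For the $(\Rightarrow)$ direction I argue the contrapositive, so assume $\Tors \A$ is infinite. Since $\Slice \A$ is metrisable (the pseudometric $d$ descends to a genuine metric on $\fT(\A)/{\sim} = \Slice \A$, because $d(\eta, \eta') = 0$ forces the two slicings to coincide via Lemma~\ref{lem:distance}), it suffices to exhibit an infinite subset whose pairwise distances are bounded below by a fixed positive constant, which will obstruct sequential compactness and hence compactness.

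The construction uses the $2$-sequences $S_\X : \A \supsetneq \X \supsetneq \0$ indexed by non-trivial torsion classes $\X \in \Tors \A \setminus \{\A, \0\}$. Any such chain of torsion classes takes only three values and is therefore automatically weakly-Artinian and quasi-Noetherian, so each $S_\X$ contributes a $2$-simplex to $\Slice \A$. Set $\bx_\X := (1/3, 2/3) \in \Delta^{\mathrm{o}}_{S_\X}$, write $\eta_\X := h(\bx_\X)$, and let $p_\X \in \Slice \A$ be its equivalence class. A direct computation using Definition~\ref{def:chain} shows that the only non-zero strata of the slicing of $\eta_\X$ are $\cP^{\eta_\X}_{1/3} = \X^\perp$ and $\cP^{\eta_\X}_{2/3} = \X$, so distinct torsion classes $\X$ yield distinct points $p_\X$ and hence the family $\{p_\X\}$ is infinite.

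The main step, and the main technical hurdle, is the lower bound $d(p_\X, p_{\X'}) \geq 1/3$ whenever $\X \neq \X'$. Suppose for a contradiction that this distance is strictly less than $1/3$; by Lemma~\ref{lem:distance} there exists $\varepsilon < 1/3$ with $\cP^{\eta_{\X'}}_r \subseteq \Filt\!\left(\bigcup_{t \in [r-\varepsilon, r+\varepsilon]} \cP^{\eta_\X}_t\right)$ for every $r \in [0,1]$. Taking $r = 2/3$, the only non-zero stratum of $\eta_\X$ meeting $[2/3-\varepsilon, 2/3+\varepsilon]$ is $\cP^{\eta_\X}_{2/3} = \X$, so $\X' = \cP^{\eta_{\X'}}_{2/3} \subseteq \Filt(\X) = \X$. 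Taking $r = 1/3$, the analogous argument yields $(\X')^\perp \subseteq \X^\perp$, which by Theorem~\ref{thm:dicksontorsion} is equivalent to $\X \subseteq \X'$. Combining the two inclusions forces $\X = \X'$, the desired contradiction; the remaining strata at $r \in \{0,1\}$ or $r \notin \{1/3, 2/3\}$ impose no condition because $\cP^{\eta_{\X'}}_r = \0$ there.
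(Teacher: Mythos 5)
Your proof is correct and follows essentially the same approach as the paper: the forward implication via Corollary~\ref{cor: finite-tors-then-finite CW}, and the reverse implication by associating to each torsion class a two-step chain and using Lemma~\ref{lem:distance} to show the resulting points are pairwise separated by a fixed positive constant. The only cosmetic differences are that the paper places the chains so the pairwise distance is exactly $1$ and then exhibits an explicit open cover with no finite subcover, whereas you place them at $(1/3, 2/3)$ to get a $1/3$ lower bound and conclude via sequential compactness of metrisable spaces.
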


\begin{proof}
Suppose that $\Tors \A$ is finite. 
Then by Corollary~\ref{cor: finite-tors-then-finite CW}, $\Slice \A$ is equivalent to a finite CW complex and is therefore compact. 

For the converse, suppose that $\Tors \A$ is infinite. 
For a torsion class $\T \in \Tors \A$ consider the chain of torsion classes $\eta(\T) = (\X)_{i \in [0,1]}$ defined by 
\[ \X_i =  \begin{cases}
\A \text{ if $i = 0$,}\\
\T \text{ if $i \in (0,1)$, }\\
\0 \text{ if $i =1.$}
\end{cases} \]
Observe that for all $\T \neq \T' \in \Tors \A$ by Lemma~\ref{lem:distance}, we have that $d(\eta(\T), \eta(\T')) = 1$. 
Consider the collection of open sets $\U = \{ B_{\frac{1}{2}}(\eta(\T)) \mid \T \in \Tors \A \}$, we complete this to an open cover of $\Slice \A$ by adding open balls of radius $\varepsilon < \frac{1}{2}$ for each point not covered by $\U$. 
By construction this is an infinite cover of $\Slice \A$ that admits no finite subcover, thus we deduce that $\Slice \A$ is non-compact. 
\end{proof}

An important example of abelian categories with finitely many torsion classes are the module categories of $\tau$-tilting finite algebras, a family of algebras introduced in \cite{DIJ}, see also \cite{TreffingerSurvey} for more details. 
Our results give a new characterisation of these algebras in terms of the slicings of their module categories. 

\begin{corollary} \label{cor:taufinite}
Let $\Lambda$ be a finite dimensional algebra and set $\A = \mod \Lambda$. 
Then $\Slice \A$ is compact if and only if $\Lambda $ is $\tau$-tilting finite. 
\end{corollary}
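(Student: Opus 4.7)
The plan is to deduce this corollary directly from Theorem~\ref{thm:compact} by invoking a known characterisation of $\tau$-tilting finiteness in terms of torsion classes. More precisely, Theorem~\ref{thm:compact} asserts that $\Slice \A$ is compact if and only if $\Tors \A$ is finite, so the content of the corollary reduces to the assertion that, for $\A = \mod \Lambda$ with $\Lambda$ a finite-dimensional algebra, $\Tors \A$ is finite if and only if $\Lambda$ is $\tau$-tilting finite.

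The first step is to recall this characterisation from the work of Demonet, Iyama and Jasso \cite{DIJ}: a finite-dimensional algebra $\Lambda$ is $\tau$-tilting finite precisely when $\mod \Lambda$ has only finitely many functorially finite torsion classes, and moreover in this case every torsion class in $\mod \Lambda$ is functorially finite, so that $\Tors(\mod \Lambda)$ itself is finite. Conversely, if $\Lambda$ is $\tau$-tilting infinite then there are infinitely many (functorially finite) torsion classes, giving infinitely many elements of $\Tors(\mod \Lambda)$. The survey \cite{TreffingerSurvey} already cited by the paper records these equivalences.

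Given this, the proof is a two-line chain of equivalences: $\Slice \A$ is compact $\Leftrightarrow$ $\Tors \A$ is finite (by Theorem~\ref{thm:compact}) $\Leftrightarrow$ $\Lambda$ is $\tau$-tilting finite (by the DIJ characterisation). No further constructions are needed, and no topological subtleties arise beyond those already handled in Theorem~\ref{thm:compact}.

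The only conceivable obstacle is purely citational: one must be precise about which direction of the DIJ characterisation is needed and ensure that the statement as quoted applies to \emph{all} torsion classes (not merely the functorially finite ones), so the appeal to \cite{DIJ} should be accompanied by the remark that $\tau$-tilting finiteness forces every torsion class to be functorially finite. This is not a mathematical difficulty but merely a bibliographic precision, so the proof can be kept very short, essentially a one-paragraph application of Theorem~\ref{thm:compact} combined with the cited result.
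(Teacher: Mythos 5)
Your proof is correct and is precisely the argument the paper intends, deducing the corollary from Theorem~\ref{thm:compact} via the Demonet--Iyama--Jasso characterisation that $\Lambda$ is $\tau$-tilting finite if and only if $\Tors(\mod\Lambda)$ is finite. The paper leaves this step implicit (it only states the corollary without a displayed proof), and your remark about the distinction between functorially finite torsion classes and arbitrary ones is exactly the right precision to make the citation airtight.
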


\subsection{Distinguished open and closed sets}

In this final subsection we study some important closed and open subsets of $\Slice \A$ and $\fT(\A)$. 

Fix a nonzero object $X \in \A$ and consider the set
\[
L(X) = \left\{ \P \in \Slice \A \mid X \in \bigcup_{i\in [0,1]} \P_i\right\}.
\]
These subsets are related with the so-called \textit{walls of type II} introduced in \cite{Bridgeland2007}. 
One of the reasons behind their name is explained in the following proposition.

\begin{proposition}\label{prop:wallII}
    The set $L(X)$ is closed in $\Slice\A$ for every nonzero object $X \in \A$.
\end{proposition}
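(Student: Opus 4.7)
The plan is to translate closedness of $L(X)$ in $\Slice \A$ into closedness of its preimage in $\fT(\A)$ and then to exhibit that preimage as the zero-set of a continuous function built from the maps $\mho$ and $\Omega$ of Definition~\ref{def:ohm-mho}. Since $\Slice \A \cong \fT(\A)/\sim$ by Proposition~\ref{prop: slice(A)  cong T(A)} and carries the quotient topology induced from the pseudometric on $\fT(\A)$, it suffices to show that the $\sim$-saturated set
\[ \widetilde{L}(X) := \{ \eta \in \fT(\A) \mid X \text{ is } \eta\text{-quasisemistable} \} \]
is closed in $\fT(\A)$.

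The key step is the following pointwise characterization: for the fixed nonzero $X$ and every $\eta \in \fT(\A)$, the object $X$ is $\eta$-quasisemistable (i.e.\ $X \in \cP_i^\eta$ for some $i \in [0,1]$) if and only if $\mho_\eta(X) = \Omega_\eta(X)$, in which case both values coincide with the phase $i$. To verify this I would unpack Definition~\ref{def:chain}: $X \in \cP_i^\eta$ means $X \in \T_s$ for every $s < i$ and $X \in \F_s$ for every $s > i$; together with the fact that $\T_s \cap \F_s = \0$ for any torsion pair, this forces $\mho_\eta(X) = \Omega_\eta(X) = i$. Conversely, if $\mho_\eta(X) = \Omega_\eta(X) = i$, the monotonicity of the chains $(\T_s)_s$ and $(\F_s)_s$ together with the definitions of supremum and infimum immediately recover $X \in \T_s$ for $s < i$ and $X \in \F_s$ for $s > i$, hence $X \in \cP_i^\eta$. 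The endpoint cases $i = 0$ and $i = 1$ are handled analogously using $\T_0 = \A$ and $\F_1 = \A$.

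Granted this characterization, I would define $f \colon \fT(\A) \to \mathbb{R}_{\geq 0}$ by $f(\eta) := |\mho_\eta(X) - \Omega_\eta(X)|$. By the definition of the pseudometric in Proposition~\ref{prop:metric}, both $\eta \mapsto \mho_\eta(X)$ and $\eta \mapsto \Omega_\eta(X)$ are $1$-Lipschitz, so $f$ is $2$-Lipschitz and in particular continuous, whence $\widetilde{L}(X) = f^{-1}(\{0\})$ is the preimage of a closed set under a continuous map and is therefore closed. The only mildly delicate ingredient is the characterization of quasisemistability as $\mho = \Omega$, which requires separately treating the endpoint values of the phase but is otherwise a routine manipulation of the definitions; continuity of the evaluation maps is essentially built into the construction of $d$.
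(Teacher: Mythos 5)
Your proof is correct and takes essentially the same approach as the paper: both rest on the key observation that $X$ is quasisemistable with respect to $\eta$ if and only if $\mho_\eta(X)=\Omega_\eta(X)$, and then read off closedness from the pseudometric. The paper phrases the last step as a direct distance estimate on an accumulation point (obtaining the lower bound $d(\bx,\by)\geq \tfrac{1}{2}\bigl(\Omega_{h(\by)}X - \mho_{h(\by)}X\bigr)$), while you phrase it as the preimage of $\{0\}$ under the Lipschitz function $\eta\mapsto|\Omega_\eta(X)-\mho_\eta(X)|$; these are the same argument in different packaging.
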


\begin{proof}
Let $\by \in \Slice\A$ be an accumulation point of $L(X)$. 
By Proposition~\ref{prop:metric} we have that
\[ d(\bx, \by) = \sup \{ |\Omega_{h(\by)}M - \Omega_{h(\bx)}M |, | \mho_{h(\by)}M - \mho_{h(\bx)}M | \mid 0 \neq M \in \A\} \]
for all $\bx \in \Slice \A$. 
If $\bx \in L(X)$ we have that $\mho_{h(\bx)}X = \Omega_{h(\bx)}X$. 
Thus in this case $d(\bx, \by)$ is bounded below by $\frac{\Omega_{h(\by)}X - \mho_{h(\by)}X}{2}$.
If $\by$ is not an element of $L(X)$ then $X$ is not $h(\by)$-quasisemistable. 
In particular, $\Omega_{h(\by)}N > \mho_{h(\by)}N$ which is clearly an obstruction to $\by$ being an accumulation point of $\Slice(X)$.
\end{proof}

By $\Slice^{\tot}\A$ we denote the space of \emph{split} slicings. 
That is, all slicings $\cP$ on $\A$ such that every indecomposable object $M$ in $\A$ there exists $r \in [0,1]$ such that $M \in \cP_r$. 
A direct consequence of Proposition~\ref{prop:wallII} we obtain the following.

\begin{corollary}
$\Slice^{\tot}\A$ is closed in $\Slice \A$.    
\end{corollary}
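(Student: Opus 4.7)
My plan is to realise $\Slice^{\tot}\A$ as an intersection of sets of the form $L(X)$ and then invoke Proposition~\ref{prop:wallII} together with the fact that arbitrary intersections of closed sets are closed.

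More concretely, unwinding the definition, a slicing $\cP$ lies in $\Slice^{\tot}\A$ precisely when every indecomposable $M \in \A$ is quasi-semistable with respect to $\cP$, i.e.\ $M \in \cP_r$ for some $r \in [0,1]$. This is the same as saying $\cP \in L(M)$ for every indecomposable $M \in \A$. Therefore
\[
\Slice^{\tot}\A \;=\; \bigcap_{M \text{ indecomposable}} L(M).
\]
By Proposition~\ref{prop:wallII}, each $L(M)$ is a closed subset of $\Slice \A$, and an arbitrary intersection of closed sets is closed, so $\Slice^{\tot}\A$ is closed in $\Slice \A$.

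There is no real obstacle here; the only thing worth double-checking is that the rewriting in terms of indecomposables is correct, which reduces to the observation that additive closure of the subcategories $\cP_r$ (together with the fact that quasi-semistability is phrased object-wise) means $\cP$ is split exactly when each indecomposable lands in some slice.
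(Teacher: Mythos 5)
Your proof is correct and matches the paper's argument exactly: both write $\Slice^{\tot}\A$ as $\bigcap_{0 \neq X \in \operatorname{ind}\A} L(X)$ and appeal to Proposition~\ref{prop:wallII} plus stability of closedness under arbitrary intersections. No differences worth noting.
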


\begin{proof}
It is enough to see that 
\[
\Slice^{\tot}\A = \bigcap_{0 \neq X\in \operatorname{ind}\A } L(X)
\]  
and thus we are done since the right hand side is an intersection of closed sets by Proposition~\ref{prop:wallII}.
\end{proof}

Following \cite{tattar2021torsion}, we call torsion pairs $(\X, \Y)$ and $(\X', \Y')$  in $\A$ such that $\X \subseteq \X'$ \emph{twin}. 
In the following result, which is a generalisation of \cite[Theorem 6.11]{tattar2021torsion}, we see that twin torsion pairs induce closed sets.

\begin{proposition} \label{prop:heartclosed}
Let $(\X, \Y)$ and $(\X', \Y')$ be twin torsion pairs in $\A$ and let $(a,b) \subset [0,1]$ be an interval. 
Then the set 
\[ Z = \left\{ \bx \in \Slice \A \mid  \T_i = 
    \begin{cases}
    \A \text{ if $i \in [0,a)$,}\\
\T \text{ with $\X \subseteq \T \subseteq \X'$ if $i \in (a,b)$, }\\
\0 \text{ if $i \in (b,1].$}
    \end{cases} \text{ where } h\bx = (\T_i)_{i \in [0,1]}  
     \right\}  \] 
is closed in $\fT(\A)$. 
\end{proposition}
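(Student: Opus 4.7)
My plan is to realise $Z$ as an intersection of closed subsets of $\Slice\A$, exploiting the fact that for each fixed nonzero $M \in \A$ the maps $\by \mapsto \mho_{h(\by)}(M)$ and $\by \mapsto \Omega_{h(\by)}(M)$ are $1$-Lipschitz, hence continuous, directly from the definition of the pseudometric in Proposition~\ref{prop:metric}. The strategy is to translate the definition of $Z$, which constrains the shape of the chain, into pointwise conditions on the numerical invariants $\mho, \Omega$ on a carefully chosen family of test objects.

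Concretely I will consider three families of closed sets. For each nonzero $M \in \X$ let $Z_M = \{\by \in \Slice\A \mid \mho_{h(\by)}(M) = \Omega_{h(\by)}(M) = b\}$, which is closed as the preimage of a point in $[0,1]^2$. For each nonzero $N \in \Y'$ let $Z'_N$ be the analogue with target value $a$. For each nonzero $P \in \A$ let $Z''_P = \{\by \in \Slice\A \mid \mho_{h(\by)}(P), \Omega_{h(\by)}(P) \in \{a,b\}\}$, which is closed as the preimage of a finite subset of $[0,1]^2$. My candidate identification is
\[
Z \;=\; \bigcap_{0 \neq M \in \X} Z_M \;\cap\; \bigcap_{0 \neq N \in \Y'} Z'_N \;\cap\; \bigcap_{0 \neq P \in \A} Z''_P,
\]
from which closedness will be immediate. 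The inclusion $(\subseteq)$ is a direct unwinding: if $\by \in Z$ has middle torsion class $\T$ with $\X \subseteq \T \subseteq \X'$, then any nonzero $M \in \X \subseteq \T$ lies in $\T_i$ for $i \in (a,b)$ but not in $\T_i = \0$ for $i > b$, forcing $\mho_{h(\by)}(M) = b$; the dual calculation using $\Y' \subseteq \T^\perp$ yields $\Omega_{h(\by)}(M) = b$, and the Harder--Narasimhan filtration of any object has phases only in $\{a,b\}$ by Theorem~\ref{thm:semistabchain} applied to the two-phase slicing.

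For the reverse inclusion I will suppose that $\by$ lies in the right-hand intersection. The family $Z''_P$ ranging over all nonzero $P$ forces the slicing of $\by$ to vanish off $\{a,b\}$. I then need to argue that $(\cP_b, \cP_a)$ is a torsion pair in $\A$: Hom-vanishing is the first slicing axiom, and the canonical short exact sequence for each object comes from its (at most) two-step Harder--Narasimhan filtration guaranteed by the second axiom. Once this torsion pair is in hand, the $Z_M$ conditions give $\X \subseteq \cP_b$ and the $Z'_N$ conditions give $\Y' \subseteq \cP_a$, equivalently $\cP_b \subseteq \X'$; via Proposition~\ref{prop: slice(A)  cong T(A)} and Corollary~\ref{cor:max-min-slicing} the unique slicing-class determined by $\by$ is represented by the chain with values $\A$, $\cP_b$, $\0$ on $[0,a)$, $(a,b)$, $(b,1]$ respectively, which is precisely the form required by $Z$. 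The main obstacle I foresee is this last reconstruction step, namely verifying carefully that a two-phase slicing determines a chain of the prescribed shape with jumps at exactly $a$ and $b$; once this identification is made, both inclusions follow from routine manipulations of $\mho$ and $\Omega$.
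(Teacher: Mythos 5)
Your argument is correct, and it takes a genuinely different route from the paper's. The paper argues directly that $Z$ contains its accumulation points: given an accumulation point $\by$, it uses Lemma~\ref{lem:distance} (the $\Filt$-based formula for $d$) three times to rule out nonzero slicing pieces off $\{a,b\}$, then to force $\X \subseteq \cP^{h(\by)}_b$, then $\cP^{h(\by)}_a \subseteq \Y'$. You instead exhibit $Z$ as an intersection
\[
Z \;=\; \bigcap_{0\neq M\in\X} Z_M \;\cap\; \bigcap_{0\neq N\in\Y'} Z'_N \;\cap\; \bigcap_{0\neq P\in\A} Z''_P,
\]
each factor being the preimage of a closed subset of $[0,1]^2$ under the continuous (indeed $1$-Lipschitz, by Proposition~\ref{prop:metric} and Remark~\ref{rem:slices-space-properties}) map $\by \mapsto (\mho_{h(\by)}(M),\Omega_{h(\by)}(M))$. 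This is cleaner and more modular: once the Lipschitz observation is in place, the topological content becomes a routine preimage argument, and the mathematics is concentrated in the set-theoretic equality, which you verify correctly (the $Z''_P$ family kills all phases outside $\{a,b\}$, the slicing axioms then make $(\cP_b,\cP_a)$ a torsion pair directly from the definition, and the remaining families pin $\X\subseteq\cP_b\subseteq\X'$). What the paper's approach buys is stylistic uniformity, since the same accumulation-point-plus-Lemma~\ref{lem:distance} template is used for Propositions~\ref{prop: Delta_S closed} and~\ref{prop:wallII} as well; what yours buys is conceptual clarity and reusability, as it isolates a general principle (any condition expressed as equalities or finite constraints on $\mho$ and $\Omega$ over a family of objects cuts out a closed set) that also immediately recovers Proposition~\ref{prop:wallII}. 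Two minor points worth spelling out in a final write-up: (i) $\mho_{h(\by)}$, $\Omega_{h(\by)}$ descend to $\Slice\A$ because chains with the same slicing have equal $\mho$ and $\Omega$ (as used in the proof of Corollary~\ref{cor:max-min-slicing}), and (ii) in the reconstruction step one should check the degenerate cases $\cP_a=\{0\}$ or $\cP_b=\{0\}$, which correspond to $\T=\A$ (possible only when $\X'=\A$) and $\T=\{0\}$ (possible only when $\X=\{0\}$), and which $h$ handles correctly via the face identifications of Section~\ref{sec:top-properties}.
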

\begin{proof}
Note that the set $Z$ coincides with the set 
\begin{equation} \label{eqn:twinslicing}
    \{\cP \in \Slice \A \mid \cP_a \subseteq \Y', \, \cP_b \supseteq \X, \text{ and } \cP_r = 0 \forall r \in [0,a) \cup (b,1] \}.  
\end{equation} 
Now let $\by \in \Slice \A$ be an accumulation point of $Z$, we will show that the slicing corresponding to  $\by$ satisfies the criteria of the set \ref{eqn:twinslicing}. 
First suppose that there exists $s \in [0,a) \cup (b,1]$ such that $\cP_s^{h(\by)} \neq \{0\}$. 
Then by Lemma~\ref{lem:distance}, we have for all $\bx \in Z$ that \[ d(\by, \bx) \geq \min \left\{ \frac{|a-s|}{2}, \frac{|b-s|}{2} \right\} \] which is a contradiction to $\by$ being an accumulation point of $Z$. 
Now we show that $\X \subset \cP^{h(\by)}_b$. Suppose that this is not the case, and let 
\[  s = \sup\left\{ i \in [0,1] \mid \X \subseteq \Filt \left( \bigcup_{r \in [s,b]} \cP^{h\by}_r \right) \right\}.\]
Note that by assumption $s<b$. 
Then, once more by Lemma \ref{lem:distance}, we have that $ d(\by, \bx) \geq \frac{|b-s|}{2}$ for all $\bx \in Z$ which is a contradiction to $\by$ being an accumulation point. 
Using a similar argument, one shows that $\cP^{h(\by)}_a \subseteq \Y'$. 
\end{proof}

\begin{remark} \label{rem:bricklocus}
Let $\A=\mod A$ for a finite-dimensional algebra $A$ and let $\X \subset \X'$ be a minimal inclusion in $\Tors \A$, then it follows from \cite[Theroem~2.8]{BarnardCarrollZhu} that $\X' = \Filt ( \X \cup \{B\} ) $ for some brick $B \in \A$, this is also known as the \emph{brick label} \cite{BarnardCarrollZhu, DIRRT} of the arrow corresponding to the covering relation $\X' > \X$ in the Hasse quiver of $\Tors \A$. 
Then the sets $Z$ as considered in Proposition~\ref{prop:heartclosed} are subsets of $L(B)$. 
\end{remark}

\subsubsection{Wall-and-Chamber structure of \texorpdfstring{$\CT(\A)$}{CT(A)}} \label{sec:wall-and-chamber}

As we noted at the start of Section~\ref{sec:top-properties}, there is another natural equivalence relation in $\CT(\A)$ which is stronger than the one considered in Section~\ref{section:equivrelation} as follows.
Let $\eta, \eta'\in \CT(\A)$ and $\P_\eta$, $\P'_{\eta'}$ their respective slicings.
We say that $\eta$ is equivalent to $\eta'$ if there exists a continuous order-preserving bijection $f: [0,1] \to [0,1]$ such that $\P_{f(t)}= \P'_{t}$ for all $t$ in $[0,1]$. 
Or equivalently, $\eta$ is equivalent to $\eta'$ if they induce the same sequence in $\Tors \A$. 

We denote the partition of $\CT(\A)$ induced by the equivalence relation of the previous definition as the \textit{wall-and-chamber structure} of $\CT(\A)$.
This terminology, borrowed from \cite{Bridgeland2007}, comes from the analysis of the interaction between the cosets of the equivalence class with the topology of $\CT(\A)$.

Note that this new equivalence relation is stronger than the equivalence relation $\sim$ that we have previously considered. Indeed, if $\eta \sim \eta'$ then the identity $f: [0,1] \to [0,1]$ shows that $\eta$ and $\eta'$ are equivalent.  Thus, one can also construct the the wall-and-chamber structure of $\fT(\A)$ from the space $\Slice \A = \fT(\A) / \sim$.

The first thing to remark is that every coset of $\CT(\A)$ is connected. 
A coset of $\CT(\A)$ is said to be a \textit{chamber} if it is an open set of $\CT(\A)$. 
Otherwise, we say that the coset is a subset of a \textit{wall}. 
The choice of the word \textit{wall} is explained by the fact that it was shown in \cite{Bridgeland2007} that every wall in $\Stab(\mathcal{D})$, the space of Bridgeland stability conditions of a triangulated category $\mathcal{D}$, is of a closed space of codimension $1$.

Recall \cite{Keller2011b, BSTw&c} that a finite sequence of torsion classes $\A = \T_0 \subset \T_1  \subset \dots \subset \T_m = \{0\}$ is a \emph{maximal green sequence} if for all $1\leq i \leq m$ and torsion classes $\X$ such that $\T_{i-1} \subset \X \subset \T_i$, we have that $\X \in \{ \T_{i-1}, \T_i \}$.
For more details on maximal green sequences see \cite{KellerSurvey}.
We now show maximal green sequences in $\A$ determine the chambers of $\CT(\A)$.

\begin{theorem}\label{thm:chambers}
Then there is a one-to-one correspondence between the chambers of the space $\CT(\A)$ and the maximal green sequences in $\A$.
\end{theorem}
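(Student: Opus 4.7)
The plan is to show, for each weakly-Artinian and quasi-Noetherian sequence $S$ in $\Tors \A$, that the coset $h(\Delta_S^{\mathrm{o}})$ corresponding to $S$ is open in $\CT(\A)$ if and only if $S$ is a maximal green sequence. Since every coset arises this way via the parametrization of Section~\ref{sec:top-properties}, this immediately yields the desired bijection.

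For the forward direction, fix a maximal green sequence $S \colon \A = \X_0 \supsetneq \X_1 \supsetneq \dots \supsetneq \X_m = \0$ and a point $\bx = (x_1, \dots, x_m) \in \Delta_S^{\mathrm{o}}$. Set $\delta = \min_{0 \leq i \leq m}(x_{i+1} - x_i) > 0$ (with $x_0 := 0$ and $x_{m+1} := 1$), and pick $0 < \varepsilon < \delta/2$. I will show that any $\by \in \CT(\A)$ satisfying $d(\by, h(\bx)) < \varepsilon$ has sequence equal to $S$. Write $\by$'s sequence and slices as $T \colon \A = \X'_0 \supsetneq \dots \supsetneq \X'_\ell = \0$ and $(\cP^T_j)_{j=1}^{\ell}$ at positions $y_1 < \dots < y_\ell$. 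Two applications of Lemma~\ref{lem:distance} combined with the gap condition produce a well-defined map $\pi \colon \{1, \dots, \ell\} \to \{1, \dots, m\}$ with $\cP^T_j \subseteq \cP^S_{\pi(j)}$, and dually $\cP^S_i = \Filt\bigl(\bigcup_{j \in J_i} \cP^T_j\bigr)$, where $J_i := \pi^{-1}(\{i\})$ is nonempty. The crucial step is to rule out $|J_i| \geq 2$: given $j_1 < j_2$ in some $J_i$, I claim that the torsion class $\X'_{j_1} = \Filt\bigl(\bigcup_{k > j_1} \cP^T_k\bigr)$ of $T$ lies strictly between $\X_i$ and $\X_{i-1}$ in $\Tors \A$, contradicting the MGS hypothesis on $S$. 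The containment $\X'_{j_1} \subseteq \X_{i-1}$ holds because $y_k > y_{j_1} > x_i - \varepsilon$ forces $\pi(k) \geq i$ for all $k > j_1$; strictness comes from $\cP^T_{j_1} \subseteq \X_{i-1}$ together with $\cP^T_{j_1} \cap \X'_{j_1} = 0$ (by the slicing axiom for $T$), while the dual strictness $\X_i \subsetneq \X'_{j_1}$ uses $\cP^T_{j_2} \subseteq \X'_{j_1}$ with $\cP^T_{j_2} \cap \X_i = 0$ (since $\cP^S_i \cap \X_i = 0$). Hence $|J_i| = 1$ for every $i$, forcing $T = S$.

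For the reverse direction, suppose $S$ is not a maximal green sequence. If $S$ is finite, choose a torsion class $\Y$ strictly between some consecutive $\X_{i-1} \supsetneq \X_i$ of $S$, yielding a proper refinement $S'$ with $S$ as a subsequence; then $\Delta_S$ identifies with a proper face of $\Delta_{S'}$, and for any $\bx \in \Delta_S^{\mathrm{o}}$ the perturbation $\bx_\alpha \in \Delta_{S'}^{\mathrm{o}}$ obtained by inserting a new coordinate at $x_i - \alpha$ satisfies $d(h(\bx_\alpha), h(\bx)) \leq \alpha$ by Lemma~\ref{lem:distance}, so $\bx$ is an accumulation point of the distinct coset $h(\Delta_{S'}^{\mathrm{o}})$. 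If $S$ is infinite, the pigeonhole principle forces any $\bx \in \Delta_S^{\mathrm{o}}$ to admit pairs of arbitrarily close consecutive coordinates; collapsing such a pair yields nearby points of $\Delta_{S''}^{\mathrm{o}}$ for the proper subsequence $S''$ of $S$ omitting the corresponding torsion class, again in a distinct coset. In either case $\Delta_S^{\mathrm{o}}$ fails to be open. The main technical obstacle is expected to be the first direction---specifically the construction of the intermediate torsion class $\X'_{j_1}$ from the slicing data of $\by$ and the verification of its strict placement between $\X_{i-1}$ and $\X_i$; the remaining steps follow routinely from Lemma~\ref{lem:distance} and the simplicial structure of $\CT(\A)/{\sim}$.
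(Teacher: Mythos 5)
Your proof is correct and follows the same underlying strategy as the paper: for the forward direction, produce a torsion class strictly between consecutive terms of the MGS to derive a contradiction with maximality, and for the reverse direction, explicitly construct nearby chains lying in a different coset. That said, your organization of the forward direction differs noticeably. The paper starts from an arbitrary $\eta'\in B_\varepsilon(\eta)$, passes to an equivalent chain $\eta''$ with $\T''_r=\Filt\bigl(\bigcup_{t>r}\P'_t\bigr)$, and uses Lemma~\ref{lem:distance} to sandwich $\T''_{a_i-\varepsilon}$ between $\X_i$ and $\X_{i-1}$. You instead build the phase-comparison map $\pi$ between slicing indices and show each fiber $J_i$ is a singleton by constructing $\X'_{j_1}=\Filt\bigl(\bigcup_{k>j_1}\cP^T_k\bigr)$ as the intermediate torsion class; this is arguably cleaner since it reduces the argument to a purely combinatorial statement about the bijection $\pi$. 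Two small points worth tightening: (i) $h(\Delta^{\mathrm{o}}_S)$ is a strict subset of the full coset of $S$ in $\CT(\A)$, but since every element of the coset is at pseudodistance $0$ from some $h(\bx)$, the openness claim transfers, so this is harmless; (ii) in the infinite case of the reverse direction, the phrase ``consecutive coordinates'' presupposes a well-ordered index set, which need not hold---but the collapse argument goes through for any pair $x_i<x_j$ of coordinates at distance $<\alpha$ by flattening the entire interval $[x_i,x_j]$, which drops a nonempty set of torsion classes and gives a chain in a distinct coset at distance $\leq\alpha$ by Lemma~\ref{lem:distance}, exactly as the paper does with its accumulation-point argument.
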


\begin{proof}
The statement of the theorem is equivalent to saying that $\eta \in \CT(\A)$ is a maximal green sequence if and only if there exists an $\varepsilon > 0$ such that for all $\eta' \in B_{\varepsilon}(\eta) $ the Harder-Narasimhan filtrations of $M$ with respect to $\eta$ and $\eta'$ are isomorphic for all $M \in \Obj^\ast \A$. 

We first show that if $\eta$ has a neighbourhood as above, then $\eta$ is a maximal green sequence. 
Suppose that $\eta \in \CT(\A)$ is a chain of infinitely many torsion classes. 
Then, this implies that the set $S=\{x \in [0,1] \mid \P_x \neq \{0\}\}$ is infinite.
Now, given that $[0,1]$ is a compact space with the respect to the usual topology, we have that there exists a point $y \in [0,1]$ which is an accumulation point of $S$. 
That is, for every $\varepsilon > 0$ there exists infinitely many $x \in (y - \varepsilon, y+ \varepsilon)$ such that $\P_x \neq \{0\}$.
Then, starting from $\eta$, we construct a chain of torsion classes $\eta' = (\T'_i)_{i \in [0,1]} \in \CT(\A)$ by 
$$\T'_r= \begin{cases}
\T_r \text{ if $r \in [0, y-\varepsilon/2)$} \\
\T_{y+\varepsilon/2} \text{ if $r \in [y-\varepsilon/2, y+ \varepsilon/2]$}\\
\T_r \text{ if $r \in  [y+\varepsilon/2 , 1]$}. 
\end{cases}$$
It follows from Lemma \ref{lem:distance} that $\eta' \in B_{\varepsilon}(\eta)$.
On the other hand, there are  $x_1, x_2 \in (y- \varepsilon/2 , y+\varepsilon/2)$ such that $\P_{x_1} \neq \{0\}$ and $\P_{x_2} \neq \{0\}$.
We can suppose without loss of generality that $x_1 < x_2$.
Let $M_1 \oplus M_2$ be the direct sum of $M_1$ and $M_2$, a non-zero object in $\P_{x_1}$ and $\P_{x_2}$, respectively. 
Then, it is easy to see the filtration of $M$ with respect to $\eta$ is simply $0 \subset M_2 \subset M_1 \oplus M_2$.

We also have that $M_1 \in \P_{x_1}=  \bigcap\limits_{s<x_1} \T_s  \cap \bigcap\limits_{s>x_1} \F_s $. 
Since $x_1 > y - \varepsilon/2$ we have that $\bigcap\limits_{s>x_1} \F_s \subset \bigcap\limits_{y - \varepsilon/2 > s}\F_s$.
Hence $M_1 \in \bigcap\limits_{y - \varepsilon/2 > s}\F_s=  \bigcap\limits_{y - \varepsilon/2 > s}\F'_s$.
Moreover, from the fact $x_1 < y + \varepsilon/2$ we can conclude $\bigcap\limits_{s<y + \varepsilon/2} \T_s  \subset\bigcap\limits_{s<y + \varepsilon/2} \T'_s$.
So, $M_1 \in \bigcap\limits_{s<y + \varepsilon/2} \T_s = \bigcap\limits_{s<y + \varepsilon/2} \T'_s$ by construction of $\eta'$.
Therefore $M_1 \in \P'_{y+\varepsilon/2}$.
Likewise we can prove that $M_2 \in \P'_{y+\varepsilon/2}$.
Hence $M_1 \oplus M_2 \in \P'_{y+\varepsilon/2}$.
Then the filtration of $M_1 \oplus M_2$ with respect to $\eta'$ is simply $0 \subset M_1\oplus M_2$, which is different from the filtration of $M_1 \oplus M_2$ with respect to\;$\eta$.
Therefore a chain of torsion classes $\eta \in \CT(\A)$ having an open neighbourhood as in the statement is a chain of finitely many different torsion classes in $\A$.
That is for every $r \in [0,1]$ we have that $\eta$ is, up to slicing, of the form 
$$\T_r = 
\begin{cases}
\X_0 = \A &\text{ if $r \in [0, a_1)$}\\
\X_1 &\text{ if $r \in [a_1, a_2)$}\\
\vdots \\
\X_{n-1} &\text{ if $r \in [a_{n-1}, a_{n})$}\\
\X_n = \{0\} &\text{ if $r \in [a_{n}, 1]$}
\end{cases}$$
where $\X_i$ is a torsion class in $\A$ for all $i$ and $ \A = \X_ 0 \supsetneq \X_1 \supsetneq \dots  \supsetneq X_n = \{0\}$. 
In particular, one can see that $\P_x \neq \{0\}$ if and only if $x=a_i$ for some $i$.

Suppose that there exists a torsion class $\X$ in such that $\X_i \subsetneq \X \subsetneq \X_{i-1}$. 
Then we can construct a chain of torsion classes $\eta' = (\T'_r)_{i\ in [0,1]} \in \CT(\A)$ as follows
$$\T'_r = 
\begin{cases}
\X_0 \text{ if $r \in (0, a_1)$}\\
\vdots \\
\X_{i-1} \text{ if $r \in [a_{i-1}, a_{i-1} + \delta)$}\\
\X \text{ if $r \in [a_{i-1}+\delta , a_{i})$}\\
\X_i \text{ if $r \in [a_{i}, a_{i+1})$}\\
\vdots \\
\X_n \text{ if $r \in [a_{n}, 1]$}
\end{cases}$$
where $\delta < \min \{\frac{\varepsilon}{2}, \frac{a_{i}- a_{i-1}}{2}\}$. 
Now, we can calculate $\P'_x$ for every $x \in [0,1]$ and see that $\P'_x \neq 0$ if and only if $x=a_i$ for some $i$ or $x=a_{i-1} + \delta$.
Therefore, it follows from Lemma \ref{lem:distance} that $d(\eta, \eta') < \varepsilon$. 
Moreover one can repeat the argument that we use before to show that there exists $M_1 \in \P'_{a_{i-1}}$ and $M_{2} \in \P'_{a_{i-1}+\delta}$ such that the filtration of $M_1 \oplus M_2$ with respect to $\eta$ is not isomorphic to its filtration with respect to $\eta'$. 
So, we can conclude that if $\eta$ is a chain of torsion classes having a neighborhood $B_{\varepsilon}(\eta)$ as in the statement $\eta$ should be finite and it should not admit any refinements; in other words,  
$\eta$ is a maximal green sequence, as claimed.

\medskip
Now we take a maximal green sequence $\eta \in \CT (\A)$ and we show  the existence of a neighbourhood $B_\varepsilon(\eta)$ as in the statement.
Since $\eta$ is a maximal green sequence, we have that every $\T_s\in \CT$ is, up to slicing, of the form 
$$\T_r = 
\begin{cases}
\X_0 \text{ if $r \in (0, a_1)$}\\
\X_1 \text{ if $r \in [a_1, a_2)$}\\
\vdots \\
\X_n \text{ if $r \in [a_{n}, 1)$}
\end{cases}$$
where $\X_i$ is a torsion class for every $i$, $\X_0=\A$, $\X_n = \{0\}$ and given a torsion class $\X$ such that $\X_{i} \subset \X \subset \X_{i-1}$ then $\X$ is either equal to $\X_i$ or $\X_{i-1}$.
Let 
$\varepsilon < \min \left\{\frac{a_{i+1} - a_{i}}{2} \mid 1 \leq i \leq n-1 \right\}$ and let $\eta' \in B_\varepsilon(\eta)$ with slicing $\P_{\eta'}$. 
Let $\eta'' = (\T_r'')_{r \in [0,1]}$ be the be the chain of torsion classes   such that for all $r \in (0,1)$ 
$$\T''_{r} = \Filt \left( \bigcup_{t>r} \P'_t \right).$$
It is easy to see that $d(\eta',\eta'')=0$ since $\P_{\eta'}= \P_{\eta''}$. 
Then Lemma \ref{lem:distance} implies that 
$$\X_i=\T_{a_i}=\Filt \left( \bigcup_{t>a_i} \P_t \right) \subset \Filt \left( \bigcup_{t>a_i-\varepsilon} \P'_t \right) = \T''_{a_i- \varepsilon}.$$
On the other hand, 
$$\T''_{a_i-\varepsilon} = \Filt \left( \bigcup_{t>a_i-\varepsilon} \P'_t \right) \subset \Filt \left( \bigcup_{t>a_{i}-2\varepsilon} \P_t \right) = \T_{a_{i}-2\varepsilon}=\X_{i-1}.$$
Thus, as $\eta$ is a maximal green sequence,  $\T''_{a_i - \varepsilon} \in \{ \X_i, \X_{i-1} \}$. Since $d(\eta, \eta'') < \varepsilon$ we deduce that $\T''_{a_i - \varepsilon} = \X_{i}$. 
Again appealing to the fact that $\X_0 \supsetneq \X_1 \supsetneq \dots \supsetneq \X_n$ is a maximal green sequence, we deduce that $\T''_r \in \{ \X_i \mid 0 \leq i \leq n \} $ for all $r \in [0,1]$. 
Finally, as $d(\eta, \eta'') < \varepsilon$ we conclude that, up to slicing, $\eta''$  of the form 
$$\T''_r = 
\begin{cases}
\X_0 \text{ if $r \in [0, b_1)$}\\
\X_1 \text{ if $r \in [b_1, b_2)$}\\
\vdots \\
\X_n \text{ if $r \in [b_{n}, 1]$}
\end{cases}$$
where $b_i$ is such that $|a_i-b_i|< \varepsilon$ for all $1 \leq i \leq n$.
\end{proof}

To finish, we also note that one may use the language we have developed to add to the characterisations of torsion classes in Theoerem~\ref{thm:dicksontorsion}.

\begin{proposition}
    The data of a proper torsion class in $\A$ is equivalent to a $\phi \in \WStab \A$ such that $|\left\{ i \in [0,1] \mid \cP^\phi_i \neq \{0\} \right\} | = 2$. 
    Moreover, two such weak stability conditions induce the same torsion class if and only if they are equivalent.
\end{proposition}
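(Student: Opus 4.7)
The plan is to set up mutually inverse assignments between proper torsion classes and the stated equivalence classes of weak stability conditions, using the constructions and results already developed.

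For the forward direction, given a proper torsion class $\T \subseteq \A$ with torsionfree partner $\F = \T^\perp$, I would define the chain $\eta_\T = (\T_i)_{i \in [0,1]} \in \CT(\A)$ by $\T_0 = \A$, $\T_i = \T$ for $i \in (0,1)$, and $\T_1 = \{0\}$. By Theorem~\ref{thm:ohm-mho-are-wsc}, $\Omega_{\eta_\T}$ is a weak stability condition, and by Theorem~\ref{thm:semistabchain} its slicing coincides with the slicing of $\eta_\T$. Using Definition~\ref{def:chain}, a direct computation gives $\cP^{\eta_\T}_0 = \F$, $\cP^{\eta_\T}_1 = \T$, and $\cP^{\eta_\T}_t = \T \cap \F = \{0\}$ for $t \in (0,1)$, where the last equality uses the Hom-vanishing for torsion pairs. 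Since $\T$ is proper, both $\T$ and $\F$ are non-zero, so the slicing support is exactly $\{0,1\}$, of size two.

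For the backward direction, let $\phi \in \WStab \A$ with $|\{i \in [0,1] \mid \cP^\phi_i \neq \{0\}\}| = 2$, say with support $\{a,b\}$, $a < b$. By Proposition~\ref{prop: slice(A) cong T(A)}, $\phi$ is $\sim$-equivalent to a chain of torsion classes; take $\eta = \eta^-_\phi$ as a canonical representative. Since the slicing of $\eta$ has two-element support, the underlying sequence $\im \eta \subset \Tors \A$ must consist of exactly three terms $\A \supsetneq \T \supsetneq \{0\}$ for a unique proper torsion class $\T$; indeed, by Definition~\ref{def:chain} a torsion class $\T_s$ in the chain can change only at values of $s$ where $\cP^\eta_s \neq \{0\}$. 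Assign this $\T$ to $\phi$.

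These two assignments are mutually inverse modulo the wall-and-chamber equivalence of Section~\ref{sec:wall-and-chamber}. Starting from $\T$, the chain $\eta_\T$ has underlying sequence $\A \supsetneq \T \supsetneq \{0\}$, and the backward map recovers $\T$. Starting from $\phi$, the backward-then-forward composition produces a weak stability condition whose associated chain has the same underlying sequence in $\Tors \A$ as $\eta^-_\phi$, so is equivalent to $\phi$. The moreover statement then follows directly: two such $\phi$ induce the same torsion class if and only if their associated chains share the sequence $\A \supsetneq \T \supsetneq \{0\}$ in $\Tors \A$, which is precisely the wall-and-chamber equivalence.

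The main step requiring care is verifying that the slicing of $\eta_\T$ has exactly two non-zero parts—in particular the identity $\cP^{\eta_\T}_t = \{0\}$ for $t \in (0,1)$—but this reduces to the elementary fact that $\T \cap \T^\perp = \{0\}$. Conversely, pinning down that a 2-element slicing support forces the underlying sequence in $\Tors \A$ to have length exactly three is the only genuinely structural observation needed on the backward side.
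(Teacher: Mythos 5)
The paper leaves this proposition unproved: it is stated as a closing observation with no argument attached, so there is no reference proof to compare against. Your strategy---passing a torsion class $\T$ to the constant chain $\eta_\T$, computing its slicing explicitly, and inverting via $\eta^-_\phi$ and a sequence-length count in $\Tors\A$---is the natural one, the slicing computation is correct, and you correctly identify the relevant notion of equivalence in the ``moreover'' clause as the wall-and-chamber equivalence of Section~\ref{sec:wall-and-chamber} rather than the finer $\sim$ of Section~\ref{section:equivrelation} (with $\sim$ the ``only if'' would fail, since rescaling the support $\{a,b\}$ changes $\phi$ on semistables).

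There is, however, a genuine gap in the forward direction. You appeal to Theorem~\ref{thm:ohm-mho-are-wsc} to conclude that $\Omega_{\eta_\T}$ is a weak stability condition, which is true, but the proposition requires $\phi\in\WStab\A$, i.e.\ in addition quasi-Noetherian and weakly-Artinian. Theorem~\ref{thm:ohm-mho-are-wsc} does not give this, and the remark immediately following it warns precisely that $\Omega_\eta$ and $\mho_\eta$ need not be weakly-Artinian or quasi-Noetherian even when $\eta$ itself is. Here the failure is concrete: $\Omega_{\eta_\T}$ takes only the values $0$ and $1$ (it is $0$ exactly on $\F$), and if $\A$ admits an infinite strictly descending chain $A_0\supsetneq A_1\supsetneq\cdots$ with every $A_n$ and every quotient $A_n/A_{n+1}$ in $\F$, then the weakly-Artinian inequality $\Omega(A_{n+1})<\Omega(A_n/A_{n+1})$ reads $0<0$ for all $n$ and fails. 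Such chains exist in non-finite-length abelian categories with a proper torsion class; for instance, with $\T$ the torsion objects and $\F$ the torsionfree ones in a small abelian category of abelian groups containing $\bigoplus_{n\ge 0}\mathbb{Z}$, the chain $\bigoplus_{k\ge n}\mathbb{Z}$ has all terms and all subquotients torsionfree. So the map $\T\mapsto\Omega_{\eta_\T}$ does not land in $\WStab\A$ in general. To repair this one must either restrict to finite-length $\A$ (where QN/WA hold automatically), or exhibit some other $\phi\in\WStab\A$ with the correct slicing $\cP_0=\F$, $\cP_1=\T$, $\cP_t=\{0\}$ otherwise; the latter is a nontrivial existence claim that your construction does not supply. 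This issue is not yours alone: Proposition~\ref{prop: slice(A) cong T(A)} has the same implicit gap when passing from $\CT(\A)/\sim$ back to $\WStab\A/\sim$, and your proposal correctly reflects how the authors argue elsewhere---but the gap should be flagged rather than inherited silently.
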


\def\cprime{$'$} \def\cprime{$'$}
\addcontentsline{toc}{section}{References}
\bibliographystyle{plain}

 \bigskip
  \begin{tabular}{@{}l@{}}%
  \textsc{Abteilung Mathematik, Department Mathematik/Informatik der Universit\"at } \\ \textsc{zu K\"oln, Weyertal 86-90, 50931 Cologne, Germany} 
\\
    \texttt{atattar@uni-koeln.de}
    \end{tabular}
    
\bigskip    
     \begin{tabular}{@{}l@{}}%
  \textsc{IMJ-PRG, Universit\'e Paris Cit\'e, B\^atiment Sophie Germain, 5 rue Thomas } \\ \textsc{Mann, 75205 Paris Cedex 13, France} 
\\
    \texttt{treffinger@imj-prg.fr}
    \end{tabular}

\end{document}